\tikzset{
  modal/.style={>=stealth',shorten >=1pt,shorten <=1pt,auto,
  node distance=1.5cm,semithick},
  world/.style={circle,draw,minimum size=0.8cm,fill=gray!15},
  branch/.style={rectangle,draw,inner sep=0.75mm},
  leaf/.style={circle,draw,inner sep=0.5mm},
  l-leaf/.style={left = of #1, xshift=10mm, yshift=2mm},
  r-leaf/.style={right = of #1, xshift=-7mm, yshift=2mm},
  br/.style={->,dashed},
  l/.style={->,out=270,in=270},
  l-branch/.style={above left = of #1, yshift = 0.5cm},
  r-branch/.style={above right = of #1, yshift = 0.5cm},
  point/.style={circle,draw,inner sep=0.5mm},
  plane/.style={trapezium,draw,
    minimum width=4cm, minimum height=1cm,
    trapezium left angle=50, trapezium right angle=130,
    trapezium stretches=false,
    outer sep=1mm},
  flat-plane/.style={trapezium,draw,
    minimum width=4cm, minimum height=1cm,
    trapezium left angle=40, trapezium right angle=140,
    trapezium stretches=false,
    outer sep=1mm},
  l-lbl/.style={label=195:{#1}},
  root-lbl/.style={label={[yshift=1mm]270:#1}},
  l-plane/.style={above left = of #1, xshift = 1.7cm,yshift=-4mm},
  r-plane/.style={above right = of #1, xshift = 0.5cm,yshift=-4mm},
  t-lbl/.style={label={[xshift=-1mm,yshift=1mm]below:{\footnotesize #1}}},
  w1-lbl/.style={label={[xshift=-2mm,yshift=-1mm]above:{\footnotesize #1}}},
  w2-lbl/.style={label={[xshift=1mm,yshift=1mm]below:{\footnotesize #1}}},
  w3-lbl/.style={label={[xshift=4mm,yshift=1mm]below:{\footnotesize #1}}},
  symbol/.style={
    draw=none,
    every to/.append style={
      edge node={node [sloped, allow upside down, auto=false]{$#1$}}}
  },
}
\newcommand{\defeq}{\vcentcolon=}
\newcommand{\Dmd}{\Diamond}
\newcommand{\impl}{\rightarrow}
\renewcommand{\iff}{\leftrightarrow}
\newcommand{\Iff}{\Leftrightarrow}
\newcommand{\mkTheorem}[4]{
  \declaretheorem[#4,name=#2,refname={#2,#3},Refname={#2,#3}]{#1}
}
\crefname{equation}{}{}
\Crefname{equation}{}{}
\crefname{section}{Section}{Section}
\numberwithin{equation}{section}
\newcommand{\subtitle}[1]{%
  \posttitle{%
    \par\end{center}
    \begin{center}\large#1\end{center}
    \vskip0.5em}%
}
\DeclareFontFamily{U}{mathb}{\hyphenchar\font45}
\DeclareFontShape{U}{mathb}{m}{n}{
  <-6> mathb5 <6-7> mathb6 <7-8> mathb7
  <8-9> mathb8 <9-10> mathb9
  <10-12> mathb10 <12-> mathb12
}{}
\DeclareSymbolFont{mathb}{U}{mathb}{m}{n}
\DeclareMathSymbol{\llcurly}{\mathrel}{mathb}{"CE}
\DeclareMathSymbol{\ggcurly}{\mathrel}{mathb}{"CF}
\newcommand{\labitem}[2]{%
\def\@itemlabel{\textbf{#1}}
\item
\def\@currentlabel{#1}\label{#2}}
\author{Lev Dvorkin\thanks{The author is a winner of the ``Junior Leader''
    competition grant held by the ``BASIS'' Foundation for the Development of
    Theoretical Physics and Mathematics.}}
\title{Finite model property of pretransitive \mbox{analogues of (w)K4 and GL}}
\date{\today}
\newcommand{\calM}{\mathcal M}
\newcommand{\calF}{\mathcal F}
\newcommand{\PA}{\mathrm{PA}}
\newcommand{\defw}{\textit}
\newcommand{\E}[1]{\exists #1\,}
\newcommand{\A}[1]{\forall #1\,}
\begin{document}
  \maketitle \begin{abstract}
  A normal modal logic is pretransitive, if the modality corresponding to the transitive closure of
  an accessibility relation is expressible in it. In the present work we establish the finite model
  property for pretransitive generalizations of $\mathrm{K4}$, $\mathrm{wK4}$, $\mathrm{GL}$, and
  their extensions by canonical subframe-hereditary formulas.
\end{abstract}

\section{Introduction}
  Let $\Lambda$ be a normal modal logic. A formula $\chi$ with a single variable $p$
  \defw{expresses} the transitive closure in $\Lambda$, if, for any $\Lambda$-model
  $\calM = (W, R, \vartheta)$ and world $w \in W$, $\calM, w \vDash \chi$ iff there is a world $v$
  accessible from $w$ by the transitive closure of $R$ in which $p$ is true. Expressibility of the
  reflexive transitive closure is defined in a similar way. It is easy to see that if, for some
  $n \geq 1$, the formula
  \begin{gather*}
    \mathrm{Trans}_n \defeq \Dmd^{n+1}p \impl \bigvee_{k=1}^n \Dmd^k p\quad \left(\mathrm{wTrans}_n
    \defeq \Dmd^{n+1}p \impl \bigvee_{k=0}^n \Dmd^k p\right),
  \end{gather*}
  is derivable in $\Lambda$, then $\bigvee_{k=1}^n \Dmd^k p$ ($\bigvee_{k=0}^n \Dmd^k p$) expresses
  the (reflexive) transitive closure in it. We call a logic \defw{(weakly) $n$-transitive} if it
  contains $\mathrm{(w)Trans}_n$. It is obvious that any $n$-transitive logic is weakly
  $n$-transitive. In fact, the following conditions on a logic $\Lambda$ are equivalent
  (\cref{pr:pretrans}):
  \begin{itemize}
    \item transitive closure is expressible in $\Lambda$;
    \item reflexive transitive closure is expressible in $\Lambda$;
    \item $\Lambda$ is $n$-transitive for some $n \geq 1$;
    \item $\Lambda$ is weakly $n'$-transitive for some $n' \geq 1$.
  \end{itemize}
  If any of these conditions holds, $\Lambda$ is called \defw{pretransitive}.

  The simplest examples of pretransitive logics are
  \begin{gather*}
    \mathrm{K4} \defeq \mathrm{K} + \Dmd^2 p \impl \Dmd p \quad\text{and}\quad \mathrm{wK4} \defeq
    \mathrm{K} + \Dmd^2 p \impl \Dmd p \vee p
  \end{gather*}
  {}--- the least $1$-transitive and weakly $1$-transitive logics. It is known that both logics have
  the finite model property \cite{Lemm66,BEG09}. Moreover, all extensions of these logics by
  \defw{subframe-hereditary} formulas (if a formula is valid in a frame $\calF$, then it is valid in
  all subframes of $\calF$) have the finite model property~\cite{Fine85,BGJ11}. In contrast, the
  finite model property of $\mathrm{K} + \mathrm{Trans}_n$ and $\mathrm{K} + \mathrm{wTrans}_n$ for
  $n \geq 2$ is an open problem~\cite[Problem 11.2]{ChZa97}.

  Another examples of pretransitive logics are
  \begin{gather*}
    \mathrm{K4}^m_n \defeq \mathrm{K} + \Dmd^n p \impl \Dmd^m p \quad\text{and}\quad
    \mathrm{wK4}^m_n \defeq \mathrm{K} + \Dmd^n p \impl \Dmd^m p \vee p
  \end{gather*}
  for $m < n$. If $m = 0$, these logics are degenerated and obviously have the finite model
  property. The finite model property of $\mathrm{K4}^1_n$ was established by Gabbay~\cite{Gab72},
  the same property of $\mathrm{wK4}^1_n$ was established in the recent work of Kudinov and
  Shapirovsky \cite{KudShap25}. Moreover, in the work~\cite{KudShap25}, it was proven that all
  extensions of these logics by canonical subframe-hereditary formulas have the finite model
  property. For $n > m \geq 2$, it is unknown whether $\mathrm{K4}^m_n$ and $\mathrm{wK4}^m_n$ have
  the finite model property~\cite[Problem 11.2]{ChZa97}.

  The situation is better for the logics of finite height and, in particular, for pretransitive
  generalizations of $\mathrm{S5}$. The finite model property of extensions of
  $\mathrm{K} + \mathrm{wTrans}_n$, $n \geq 1$, and $\mathrm{K4}^m_n$, $n > m \geq 1$, by the
  symmetry axiom for the transitive closure was established in~\cite{Jan94} and \cite{KudShap11}
  respectively. In~\cite{KudShap17}, the finite model property of extensions of these logics by the
  bounded height axioms was obtained.

  Besides pretransitive analogues of $\mathrm{K4}$ and $\mathrm{wK4}$, the following generalizations
  of G\"odel--L\"ob logic were considered in the previous works:
  \begin{gather*}
    \mathrm{GL}_n \defeq \mathrm{K} + \Box(\Box^{n-1} p \impl p) \impl \Box p = \mathrm{K} + \Dmd p
    \impl \Dmd(p \wedge \neg\Dmd^{n-1}p),\quad n \geq 2.
  \end{gather*}
  Sacchetti~\cite{Sacc01} proved that these logics are complete with respect to the class of all
  finite conversely well-founded $\mathrm{K4}^1_n$-frames, have Craig interpolation and fixed point
  properties. Kurahashi~\cite{Kur18} constructed $\Sigma_2$ numerations for $\PA$, for which these
  logics are provability logics.

  Another pretransitive analogue of $\mathrm{GL}$
  \begin{gather}
    \label{eq:gl-2}
    \mathrm{K} + \Box\bigl(\Box(p \vee \Box p) \impl p\bigr) \impl \Box p = \mathrm{K} + \Dmd p
    \impl \Dmd\bigl(p \wedge \neg\Dmd(p \wedge \Dmd p)\bigr)
  \end{gather}
  appeared in~\cite{Dvo24} as a fragment of the provability logic of a non-r.e.\ extension of $\PA$
  with respect to $\PA$. The properties of this logic have not been studied yet.

  Let us denote the family of logics $\mathrm{K4}^1_n$, $\mathrm{wK4}^1_n$, and $\mathrm{GL}_n$ for
  $n \geq 2$ by $C_+$. We also denote the family of logics $\mathrm{K} + \mathrm{Trans}_n$,
  $\mathrm{K} + \mathrm{wTrans}_n$ for $n \geq 2$ and $\mathrm{K4}^m_n$, $\mathrm{wK4}^m_n$ for
  $n > m \geq 2$ by $C_?$. As it was mentioned above, the finite model property for logics from
  $C_+$ is known, but for logics from $C_?$ it is an open problem. There are several properties of
  the logics from $C_+$ that make them easier to investigate. Firstly, the logics from $C_+$ are
  subframe-hereditary. Secondly, for every $\Lambda \in C_+$ and Kripke frame $\calF = (W, R)$ there
  is a \defw{$\Lambda$-closure} of $R$ (the smallest relation $R' \supseteq R$ such that $(W, R')$
  is a $\Lambda$-frame). For example, $\mathrm{K4}$-closure is the transitive closure and
  $\mathrm{K4}^1_n$-closure of $R$ is $\bigcup_{k=0}^\infty R^{(n-1)k+1}$. The logics from $C_?$
  satisfy neither of these properties.

  However, there are some logics with the first property, but without the second. The following
  logics are among them:
  \begin{align*}
    \mathrm{K4}_{\Dmd^2\alpha} &\defeq \mathrm{K} + \Dmd^2 \alpha(p) \impl \Dmd p,\\
    \mathrm{wK4}_{\Dmd^2\alpha} &\defeq \mathrm{K} + \Dmd^2 \alpha(p) \impl \Dmd p \vee p,\\
    \mathrm{GL}_{\Dmd^2\alpha} &\defeq \mathrm{K} + \Dmd p \impl \Dmd(p \wedge \neg\Dmd\alpha(p)),
  \end{align*}
  where $\alpha$ is a strictly positive formula with exactly one variable $p$. Denote by $C$ the
  family of these logics. Notice that $C \supset C_+$:
  $\mathrm{(w)K4}_{\Dmd^n p} = \mathrm{(w)K4}^1_n$ and $\mathrm{GL}_{\Dmd^n p} = \mathrm{GL}_n$ for
  $n \geq 2$. Also, $\mathrm{GL}_{\Dmd^2(p \wedge \Dmd p)} \in C$ is exactly the logic presented
  in~\cref{eq:gl-2}. We will show that all logics from $C$ and their extensions by canonical
  subframe-hereditary formulas (for $\mathrm{GL}_{\Dmd^2\alpha}$, inherited by more general
  substructures) have the finite model property.

  The paper is organized as follows:

  In \cref{s:basics}, we provide neccessary definitions and well-known facts from modal logic.

  In \cref{s:class}, we define the logics under consideration, prove their basic properties and
  state the main results.

  In \cref{s:grid}, we obtain auxiliary combinatorial results about Kripke frames of the logics
  under consideration.

  In \cref{s:fmp}, we prove the finite model property of the pretransitive analogues of
  $\mathrm{(w)K4}$ and $\mathrm{GL}$.

  In \cref{s:gl-equiv}, we extend the well-known fact that $\mathrm{GL}$ is the closure of
  $\mathrm{wK4}$ under the L\"ob's rule to the pretransitive case.

  In \cref{s:problems}, we state some open problems for the future work.
 \section{Basic concepts}
  The most of the definitions here are standard, see, e.g., \cite{ChZa97} or \cite{BdRV01}.

  \label{s:basics}
  \subsection{Modal formulas and logics}
    The set of \defw{modal formulas} $\mathrm{Fm}$ is built from a countable set of
    \defw{propositional variables} $\mathrm{PVar} = \{p_0, p_1, \dots\}$ and $\bot$ using
    implication $\impl$ and an unary modal connective $\Dmd$. Other connectives are defined as
    abbreviations, in particular, $\neg\varphi \defeq \varphi \impl \bot$, $\top \defeq \neg\bot$,
    and $\Box\varphi \defeq \neg\Dmd\neg\varphi$. Also we denote $\Dmd^0 \varphi \defeq \varphi$,
    $\Dmd^{k+1} \varphi \defeq \Dmd\Dmd^k \varphi$, $\Box^k \varphi \defeq \neg\Dmd^k\neg\varphi$
    for $k \in \omega$. Graphical equality of formulas is denoted by the symbol $\equiv$.

    The set of all formulas with a single variable $p \in \mathrm{PVar}$ is denoted by
    $\mathrm{Fm}(p)$. For $\alpha \in \mathrm{Fm}(p)$ and $\varphi \in \mathrm{Fm}$, the formula
    obtained by substituting $\varphi$ for $p$ in $\alpha$ is denoted by $\alpha(\varphi)$. In
    particular, $\alpha(p) \equiv \alpha$.

    A \defw{(propositional normal modal) logic} is a set of formulas $\Lambda \subseteq \mathrm{Fm}$
    which contains all classical tautologies, the axioms $\Dmd\bot \impl \bot$ and
    $\Dmd(p \vee q) \impl \Dmd p \vee \Dmd q$ and is closed under the rules of modus ponens,
    substitution, and \defw{monotonicity} $\frac{\varphi \impl \psi}{\Dmd\varphi \impl \Dmd\psi}$.
    The smallest logic is denoted by $\mathrm{K}$. The largest logic is the set of all formulas. All
    logics except $\mathrm{Fm}$ are called \defw{consistent}. For a logic $\Lambda$ and a formula
    $\varphi$, we denote by $\Lambda + \varphi$ the smallest logic containing $\Lambda$ and
    $\varphi$. We use standard notation for the logics
    \begin{align*}
      \mathrm{K4} &\defeq \mathrm{K} + \Dmd^2 p \impl \Dmd p,\\
      \mathrm{wK4} &\defeq \mathrm{K} + \Dmd^2 p \impl \Dmd p \vee p,\\
      \mathrm{GL} &\defeq \mathrm{K} + \Dmd p \impl \Dmd(p \wedge \neg\Dmd p).
    \end{align*}
    We say that a formula $\varphi$ \defw{is derivable} in $\Lambda$ and write
    $\Lambda \vdash \varphi$, if $\varphi \in \Lambda$.

  \subsection{Kripke frames and models}
    A \defw{Kripke frame} is a pair $\calF = (W, R)$, where $W$ is a nonempty set, and
    $R \subseteq W \times W$ is a relation on $W$. The elements of $W$ are called \defw{possible
    worlds} and $R$ is called an \defw{accessibility relation}. $\calF$ is \emph{finite}, if $W$ is
    finite. It is \emph{transitive}, if $R$ is transitive. $\calF$ is \emph{conversely
    well-founded}, if there is no infinite sequence of worlds $w_0, w_1, \dots$ such that
    $w_k \mathrel R w_{k+1}$ for all $k \in \omega$.

    For a world $w \in W$, we denote by $R(w)$ the set $\{v \in W \mid w \mathrel R v\}$. Powers of
    a relation are defined in the standard way: $R^0(w) \defeq \{w\}$,
    $R^{k+1}(w) \defeq \bigcup_{v \in R(w)}R^k(v)$, $k \in \omega$. Relations
    $R^+ \defeq \bigcup_{k > 0}R^k$ and $R^* \defeq \bigcup_{k \geq 0}R^k$ are called
    \defw{transitive} and \defw{reflexive transitive} closure of $R$ respectively.

    A \defw{valuation} on the frame $\calF$ is a mapping
    $\vartheta : \mathrm{PVar} \impl \mathcal{P}(W)$, where $\mathcal{P}(W)$ is the set of all
    subsets of $W$. A \defw{Kripke model} on $\calF$ is a pair $\calM = (\calF, \vartheta)$, where
    $\vartheta$ is a valuation on $\calF$. The valuation $\vartheta$ is extended to $\mathrm{Fm}$ by
    the following rules:
    \begin{gather*}
      \vartheta(\bot) \defeq \emptyset,\quad \vartheta(\varphi \impl \psi) \defeq W \setminus
      \vartheta(\varphi) \cup \vartheta(\psi),\\
      \vartheta(\Dmd\varphi) \defeq \{w \in W \mid R(w) \cap \vartheta(\varphi) \neq \emptyset \}.
    \end{gather*}
    It is clear that
    $\vartheta(\Dmd^k \varphi) = \{w \in W \mid R^k(w) \cap \vartheta(\varphi) \neq \emptyset\}$. If
    $w \in \vartheta(\varphi)$, we say that $\varphi$ \defw{is true at $w$ in $\calM$} and denote
    this by $\calM, w \vDash \varphi$. A formula $\varphi$ is \defw{true in a model} $\calM$, if
    $\vartheta(\varphi) = W$, \defw{valid in a frame} $\calF$, if it is true in all models on
    $\calF$, \defw{valid in a class of frames} $\mathcal{C}$, if it is valid in all frames from
    $\mathcal{C}$. We denote this by $\calM \vDash \varphi$, $\calF \vDash \varphi$, and
    $\mathcal{C} \vDash \varphi$ respectively. For $\Gamma \subseteq \mathrm{Fm}$, we say that
    \emph{$\Gamma$ is true at $w$ in $\calM$}, if all $\varphi \in \Gamma$ are true at $w$ and
    denote this by $\calM, w \vDash \Gamma$. $\calM \vDash \Gamma$, $\calF \vDash \Gamma$, and
    $\mathcal{C} \vDash \Gamma$ are defined in a similar way.

    A Kripke model (frame) is called a \defw{$\Lambda$-model ($\Lambda$-frame)} if $\Lambda$ is true
    (valid) in it. The class of all $\Lambda$-frames is denoted by $\mathrm{Frames}(\Lambda)$. The
    set $\mathrm{Log}(\mathcal{C})$ of all formulas that are valid in a class of frames
    $\mathcal{C}$ is called the \defw{logic of}~$\mathcal{C}$. It is easy to show that, if
    $\mathcal{C} \neq \emptyset$, then $\mathrm{Log}(\mathcal{C})$ is a consistent logic. A logic
    $\Lambda$ is \defw{Kripke-complete}, if it is the logic of some class of frames. $\Lambda$ has
    the \defw{finite model property}, if $\Lambda$ is the logic of a class of finite frames.
  \subsection{Generated and selective subframes and submodels}
    For a Kripke model $\calM_0 = (W_0, R_0, \vartheta_0)$ and $W \subseteq W_0$, we denote
    \begin{gather*}
      R_0|_W \defeq R_0 \cap (W \times W),\quad \vartheta_0|_W(p) \defeq \vartheta_0(p) \cap W\text{
      for all } p \in \mathrm{PVar}.
    \end{gather*}
    \vspace{-3.5ex}
    \begin{definition}
      Let $\calF = (W, R)$ and $\calF_0 = (W_0, R_0)$ be Kripke frames. $\calF$ is
      \begin{itemize}
        \item a \emph{weak subframe} of $\calF_0$ if $W \subseteq W_0$ and $R \subseteq R_0$;
        \item a \emph{subframe} of $\calF_0$ if $W \subseteq W_0$ and $R = R_0|_W$;
        \item a \emph{generated subframe} of $\calF_0$ if it is a subframe of $\calF_0$ and
              $R_0(w) \subseteq W$ for all $w \in W$.
      \end{itemize}
      $\calM = (\calF, \vartheta)$ is a \emph{submodel} (\emph{weak submodel, generated submodel})
      of $\calM_0 = (\calF_0, \vartheta_0)$ if $\vartheta = \vartheta_0|_W$ and $\calF$ is a
      subframe (weak subframe, generated subframe) of $\calF_0$.
    \end{definition}
    It is easy to see that the truth of modal formulas is preserved in generated submodels: if
    $\calM$ is a generated submodel of $\calM_0$, then
    $\vartheta(\varphi) = \vartheta_0(\varphi) \cap W$ for all $\varphi \in \mathrm{Fm}$. It follows
    that validity is preserved in generated subframes, that is,
    $\mathrm{Log}(\calF_0) \subseteq \mathrm{Log}(\calF)$ whenever $\calF$ is a generated subframe
    of $\calF_0$. However, the truth in not preserved in submodels in general. For this purpose,
    selective submodels are considered.

    For a formula $\zeta$, denote by $\Psi^\zeta$ the set of all formulas $\psi$ such that
    $\Dmd\psi$ is a subformula of $\zeta$.
    \begin{definition}
      A weak submodel $\calM = (W, R, \vartheta)$ of the Kripke model
      $\calM_0 = (W_0, R_0, \vartheta_0)$ is \emph{$\zeta$-selective}, if
      \begin{gather*}
        \A{ w \in W} \A{\psi \in \Psi^\zeta} \bigl(R_0(w) \cap \vartheta_0(\psi) \neq \emptyset
        \Rightarrow R(w) \cap \vartheta_0(\psi) \neq \emptyset\bigr).
      \end{gather*}
    \end{definition}
    These kind of submodels are also called selective filtrations~\cite{ShapSheht03}. The following
    standard lemma claims that the truth of $\zeta$ and all its subformulas is preserved in a
    $\zeta$-selective submodel~\cite[Lemma 6]{ShapSheht03}:
    \begin{lemma}
      \label{l:filtration} If $\calM$ is a $\zeta$-selective weak submodel of $\calM_0$, then
      $\vartheta(\varphi) = \vartheta_0(\varphi) \cap W$ for each subformula $\varphi$ of $\zeta$.
    \end{lemma}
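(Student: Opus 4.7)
The plan is to proceed by induction on the structure of the subformula $\varphi$ of $\zeta$. The base cases ($\varphi \in \{\bot\} \cup \mathrm{PVar}$) are immediate from $\vartheta = \vartheta_0|_W$ (which holds because $\calM$ is a weak submodel) and from $\vartheta(\bot) = \emptyset = \vartheta_0(\bot) \cap W$. For the implication case $\varphi = \psi_1 \impl \psi_2$, both $\psi_1$ and $\psi_2$ are subformulas of $\zeta$, so the induction hypothesis applies to them; then one computes
\begin{gather*}
\vartheta(\psi_1 \impl \psi_2) = (W \setminus \vartheta(\psi_1)) \cup \vartheta(\psi_2) = (W \setminus (\vartheta_0(\psi_1) \cap W)) \cup (\vartheta_0(\psi_2) \cap W),
\end{gather*}
which equals $\vartheta_0(\psi_1 \impl \psi_2) \cap W$ after elementary set manipulations using $\vartheta(\psi_i) \subseteq W$.

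The only interesting case is the modal one: $\varphi = \Dmd\psi$ with $\psi \in \Psi^\zeta$. For the inclusion $\vartheta(\Dmd\psi) \subseteq \vartheta_0(\Dmd\psi) \cap W$, I take $w \in \vartheta(\Dmd\psi)$, pick a witness $v \in R(w) \cap \vartheta(\psi)$, and use $R \subseteq R_0$ together with the induction hypothesis $\vartheta(\psi) = \vartheta_0(\psi) \cap W$ to conclude $v \in R_0(w) \cap \vartheta_0(\psi)$. For the reverse inclusion, I take $w \in \vartheta_0(\Dmd\psi) \cap W$; then $R_0(w) \cap \vartheta_0(\psi) \neq \emptyset$, and the $\zeta$-selectivity condition, applied precisely to this $w \in W$ and this $\psi \in \Psi^\zeta$, yields a $v \in R(w) \cap \vartheta_0(\psi)$. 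Since $R \subseteq W \times W$ (as $R$ is a relation on $W$), $v \in W$, and the induction hypothesis gives $v \in \vartheta(\psi)$, so $w \in \vartheta(\Dmd\psi)$.

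There is no real obstacle here: the definition of $\zeta$-selectivity was designed exactly to make the $\supseteq$ direction of the modal step work, while the $\subseteq$ direction uses only $R \subseteq R_0$ from the weak-submodel assumption. The subformula property ensures that all formulas appearing in the induction remain within the scope of the hypothesis, and the restriction $\psi \in \Psi^\zeta$ matches the quantifier in the selectivity condition. Hence the induction goes through uniformly, and the lemma follows.
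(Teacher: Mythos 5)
Your proof is correct and is exactly the standard induction-on-subformulas argument that the paper omits (it cites the lemma as a "standard" fact without proof). The modal case is handled properly in both directions: $R \subseteq R_0$ for the $\subseteq$ inclusion, and the selectivity condition (applicable since $\Dmd\psi$ being a subformula of $\zeta$ puts $\psi \in \Psi^\zeta$) for the $\supseteq$ inclusion.
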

  \subsection{Canonical model}
    Let $\Lambda$ be a modal logic. A set of formulas $\Gamma$ is called
    \defw{$\Lambda$-consistent}, if there is no finite subset $\Gamma' \subseteq \Gamma$ such that
    $\Lambda \vdash \bigwedge\Gamma' \impl \bot$. For consistent $\Lambda$, there is a model
    $\calM_\Lambda = (W_\Lambda, R_\Lambda, \vartheta_\Lambda)$, which is called the \emph{canonical
    model of $\Lambda$}, satisfying the following properties:
    \begin{enumerate}
      \item \label{c:model} $\Lambda$ is true in $\calM_\Lambda$.
      \item \label{c:lind} \defw{Lindenbaum's lemma}: for any $\Lambda$-consistent set
            $\Gamma \subset \mathrm{Fm}$, there is a world $w \in W_\Lambda$ such that
            $\calM_\Lambda, w \vDash \Gamma$.
      \item \label{c:rel} $\calM_\Lambda$ is \defw{tight}: for any two worlds $w, v \in W_\Lambda$,
            \begin{gather*}
              \A{\varphi \in \mathrm{Fm}}(\calM_\Lambda, w \vDash \Box\varphi \Rightarrow
              \calM_\Lambda, v \vDash \varphi) \Rightarrow w \mathrel R_\Lambda v.
            \end{gather*}
      \item \label{c:diff} $\calM_\Lambda$ is \defw{differentiated}: for two distinct worlds
            $w, v \in W_\Lambda$, there is a formula $\varphi$ such that
            $\calM_\Lambda, w \vDash \varphi$ and $\calM_\Lambda, v \nvDash \varphi$.
    \end{enumerate}
    The frame $\calF_\Lambda \defeq (W_\Lambda, R_\Lambda)$ is called the \defw{canonical frame} of
    $\Lambda$. $\Lambda$ is called \defw{canonical} if $\calF_\Lambda \vDash \Lambda$. If $\Lambda$
    is canonical, then, by Lindenbaum's lemma, $\Lambda = \mathrm{Log}(\calF_\Lambda)$, whence
    $\Lambda$ is Kripke-complete.

    Also we need the following standard facts about the canonical models:
    \begin{lemma}
      \label{l:canon-subframe} For any two consistent logics $\Lambda$ and $\Lambda'$, if
      $\Lambda \subseteq \Lambda'$, then $\calM_{\Lambda'}$ is a generated submodel of
      $\calM_\Lambda$.
    \end{lemma}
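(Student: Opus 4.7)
I will use the standard construction underlying the canonical model, whose worlds are maximal $\Lambda$-consistent subsets of $\mathrm{Fm}$, the relation $R_\Lambda$ is given by $w \mathrel{R_\Lambda} v \Iff \A{\varphi}(\Box\varphi \in w \Rightarrow \varphi \in v)$, and the valuation is $\vartheta_\Lambda(p) = \{w \in W_\Lambda \mid p \in w\}$. All four listed properties of $\calM_\Lambda$ follow from this construction, and $\calM_{\Lambda'}$ is built in exactly the same way, only with $\Lambda'$ in place of $\Lambda$.

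The first step is to observe $W_{\Lambda'} \subseteq W_\Lambda$. Any maximal $\Lambda'$-consistent set $w$ is $\Lambda$-consistent because $\Lambda \subseteq \Lambda'$, and it is maximal as a $\Lambda$-consistent set because maximality is equivalent to containing, for every $\varphi$, either $\varphi$ or $\neg\varphi$ — a property independent of the background logic. Hence $w \in W_\Lambda$. The valuations then automatically satisfy $\vartheta_{\Lambda'}(p) = \vartheta_\Lambda(p) \cap W_{\Lambda'}$ since both are defined by membership of $p$ in the world, and the relations satisfy $R_{\Lambda'} = R_\Lambda \cap (W_{\Lambda'} \times W_{\Lambda'})$ because the defining condition on $\Box$-formulas is the same in both models.

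It remains to verify the generated-subframe condition: for any $w \in W_{\Lambda'}$ and $v \in W_\Lambda$ with $w \mathrel{R_\Lambda} v$, one must have $v \in W_{\Lambda'}$. For this I will show $\Lambda' \subseteq v$; since $v$ is already maximal $\Lambda$-consistent (hence complete), containing $\Lambda'$ forces it to be a maximal $\Lambda'$-consistent set as well. The key input is that $\Lambda'$ is closed under the necessitation rule $\frac{\varphi}{\Box\varphi}$; this is derivable from the axioms and rules listed in the paper's definition of a normal modal logic, since $\varphi \in \Lambda'$ yields $\neg\varphi \impl \bot \in \Lambda'$, then $\Dmd\neg\varphi \impl \Dmd\bot \in \Lambda'$ by monotonicity, and finally $\Box\varphi = \neg\Dmd\neg\varphi \in \Lambda'$ via the axiom $\Dmd\bot \impl \bot$. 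Hence for any $\varphi \in \Lambda'$, we have $\Box\varphi \in \Lambda' \subseteq w$, and by the definition of $R_\Lambda$ we get $\varphi \in v$, as required.

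The main (and essentially only) obstacle is the generated-subframe step; the rest is bookkeeping. Once the argument above is in place, Lemma~\ref{l:canon-subframe} follows immediately.
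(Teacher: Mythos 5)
Your proof is correct and is exactly the standard argument: the paper states \cref{l:canon-subframe} as a well-known fact and omits the proof, and what you supply (maximal $\Lambda'$-consistent sets are maximal $\Lambda$-consistent, the valuation and relation restrict correctly, and the generated-subframe condition via necessitation and $\Box\varphi \in \Lambda' \subseteq w$) is precisely the intended standard derivation. No issues.
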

    \begin{corollary}
      \label{c:canon-subframe} If $\Lambda$ is canonical and $\Lambda' \supseteq \Lambda$, then
      $\calF_{\Lambda'} \vDash \Lambda$.
    \end{corollary}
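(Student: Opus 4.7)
The plan is to chain together Lemma \ref{l:canon-subframe} with the remark (made just after the definition of generated submodels) that validity of formulas is preserved when passing to a generated subframe. Since the statement only makes sense when $\calF_{\Lambda'}$ exists, I first note that we may assume $\Lambda'$ is consistent; otherwise the conclusion is vacuous (there is no canonical frame to speak of).

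So, assuming $\Lambda'$ consistent, I would invoke \cref{l:canon-subframe} with the hypothesis $\Lambda \subseteq \Lambda'$ to obtain that $\calM_{\Lambda'}$ is a generated submodel of $\calM_\Lambda$. In particular, $\calF_{\Lambda'}$ is a generated subframe of $\calF_\Lambda$. Next, I would use the hypothesis that $\Lambda$ is canonical, i.e.\ $\calF_\Lambda \vDash \Lambda$ by definition. Then the preservation of validity under generated subframes (recorded in the text as ``$\mathrm{Log}(\calF_0) \subseteq \mathrm{Log}(\calF)$ whenever $\calF$ is a generated subframe of $\calF_0$'') immediately gives $\calF_{\Lambda'} \vDash \Lambda$.

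There is essentially no obstacle here: the corollary is a one-line consequence of the two facts cited. The only subtlety worth flagging is the consistency proviso for \cref{l:canon-subframe}, which is handled by the vacuous case above. No calculation with particular axioms of $\Lambda$ is needed, since preservation of validity in generated subframes is a general, formula-by-formula fact that has already been established.
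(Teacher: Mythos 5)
Your proof is correct and matches the paper's intended argument: the paper states this as an immediate corollary of \cref{l:canon-subframe} together with the preservation of validity under generated subframes, which is exactly the chain you give. The consistency proviso you flag is handled correctly (and note that consistency of $\Lambda'$ also gives consistency of $\Lambda \subseteq \Lambda'$, so \cref{l:canon-subframe} applies).
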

    A formula $\varphi$ is called \defw{canonical}, if the logic $\mathrm{K} + \varphi$ is
    canonical. By \cref{c:canon-subframe}, if $\Lambda \vdash \varphi$ and $\varphi$ is canonical,
    then $\calF_\Lambda \vDash \varphi$. A lot of examples of canonical formulas are given by
    Sahlqvist theorem~\cite{Sahlq75}. In particular, $\mathrm{Trans}_n$ and $\mathrm{wTrans}_n$ are
    Sahlqvist and, therefore, canonical.
    \begin{lemma}[generalized tightness property]
      Let $\Lambda$ be a consistent logic, $w, v \in W_\Lambda$. The following holds:
      \begin{enumerate}
        \item For all $k \in \omega$, if $\A{\varphi \in \mathrm{Fm}}(\calM_\Lambda, w \vDash \Box^k
              \varphi \Rightarrow \calM_\Lambda, v \vDash \varphi)$, then
              $w \mathrel R^k_\Lambda v$.
        \item If, for some $n \geq 1$, $\A{\varphi \in \mathrm{Fm}}(\calM_\Lambda, w \vDash
              \bigwedge_{k=1}^n\Box^k \varphi \Rightarrow \calM_\Lambda, v \vDash \varphi)$, then
              $w \mathrel R^+_\Lambda v$.
      \end{enumerate}
    \end{lemma}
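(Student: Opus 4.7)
The plan is to prove part 1 by induction on $k$, then reduce part 2 to part 1 by a short combinatorial argument.

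For part 1, the base case $k = 0$ follows from differentiation (\cref{c:diff}): the hypothesis forces $w$ and $v$ to satisfy the same formulas, whence $w = v$, i.e., $w \mathrel{R^0_\Lambda} v$. The case $k = 1$ is exactly tightness (\cref{c:rel}). For the inductive step from $k$ to $k + 1$, I would look for an intermediate world $u \in W_\Lambda$ with $w \mathrel{R^k_\Lambda} u$ and $u \mathrel{R_\Lambda} v$. Writing $X_w$ for the set of formulas true at $w$ in $\calM_\Lambda$, consider
\[
  \Gamma \defeq \{\varphi \mid \Box^k\varphi \in X_w\} \cup \{\neg\Box\chi \mid \chi \notin X_v\}.
\]
If $\Gamma$ is $\Lambda$-consistent, Lindenbaum's lemma (\cref{c:lind}) supplies such $u$: the inductive hypothesis gives $w \mathrel{R^k_\Lambda} u$, tightness gives $u \mathrel{R_\Lambda} v$, and composing yields $w \mathrel{R^{k+1}_\Lambda} v$.

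The main obstacle is verifying consistency of $\Gamma$. Suppose it is inconsistent; then there are finitely many $\varphi_1, \dots, \varphi_m$ with $\Box^k\varphi_i \in X_w$ and $\chi_1, \dots, \chi_l \notin X_v$ such that $\Lambda \vdash \bigwedge_i \varphi_i \impl \bigvee_j \Box\chi_j$. Using that $\Box^k$ commutes with finite conjunctions in $\mathrm{K}$, together with $\Box^k$-monotonicity, I obtain $\Box^k(\bigvee_j \Box\chi_j) \in X_w$. Since $\Box\chi_j \impl \Box(\bigvee_j \chi_j)$ by monotonicity, one further application of $\Box^k$-monotonicity gives $\Box^{k+1}(\bigvee_j \chi_j) \in X_w$. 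The hypothesis of part~1 then forces $\bigvee_j \chi_j$ to be true at $v$, contradicting $\chi_j \notin X_v$ for every $j$.

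For part~2, I would argue by contraposition: assume $w \mathrel{R^+_\Lambda} v$ fails, so $w \mathrel{R^k_\Lambda} v$ fails for every $k \geq 1$, and in particular for each $k \in \{1, \dots, n\}$. Applying part~1 to each such $k$, I obtain formulas $\varphi_1, \dots, \varphi_n$ with $\Box^k \varphi_k \in X_w$ and $\varphi_k \notin X_v$. Setting $\psi \defeq \bigvee_{k=1}^n \varphi_k$, monotonicity yields $\Box^k \psi \in X_w$ for each such $k$, so $\bigwedge_{k=1}^n \Box^k \psi \in X_w$, while $\psi \notin X_v$. This contradicts the hypothesis of part~2, completing the reduction.
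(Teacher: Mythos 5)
Your proposal is correct and follows essentially the same route as the paper: induction on $k$ for part 1, with the inductive step obtained by building a consistent set forcing an intermediate world $u$ via Lindenbaum's lemma (your $\{\neg\Box\chi \mid \chi \notin X_v\}$ is just the paper's $\{\Dmd\psi \mid \psi \in X_v\}$ rewritten, and your consistency argument via disjunctions mirrors the paper's via negated conjunctions), and part 2 by the identical contraposition-plus-disjunction trick. No substantive differences.
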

    \begin{proof}
      The first claim is standard (see \cite[Proposition 5.9]{ChZa97}). For the second claim,
      suppose that $w \mathrel R^+_\Lambda v$ does not hold. Then, for all $k \geq 1$,
      $w \mathrel R^k_\Lambda v$ does not hold and, by the first claim, there are
      $\varphi_k \in \mathrm{Fm}$ such that $\calM_\Lambda, w \vDash \Box^k \varphi_k$ and
      $\calM_\Lambda, v \nvDash \varphi_k$. Let $\varphi \defeq \bigvee_{k=1}^n \varphi_k$. It is
      easy to see that $\calM_\Lambda, v \nvDash \varphi$ and
      $\calM_\Lambda, w \vDash \Box^k \varphi$ for all $k \in \{1, \dots, n\}$. Thus, the premise of
      the claim is false.
    \end{proof}
  \subsection{Pretransitive logics}
    For a variable $p \in \mathrm{PVar}$ and a formula $\varphi$, consider the set
    $D_p(\varphi) \subseteq \omega$:
    \begin{gather*}
      D_p(p) \defeq \{0\},\quad D_p(\bot) = D_p(q) \defeq \emptyset \quad\text{for }q \in
      \mathrm{PVar} \setminus \{p\},\\
      D_p(\varphi \impl \psi) \defeq D_p(\varphi) \cup D_p(\psi),\quad D_p(\Dmd\varphi) \defeq \{k +
      1 \mid k \in D_p(\varphi)\}.
    \end{gather*}
    The \defw{modal depth} $d(\varphi)$ of a formula $\varphi$ is defined recursively
    \begin{gather*}
      d(p) = d(\bot) = 0,\quad d(\varphi \impl \psi) = \mathrm{max}\{d(\varphi), d(\psi)\},\quad
      d(\Dmd\varphi) = d(\varphi) + 1.
    \end{gather*}
    It is easy to see that if $k \in D_p(\varphi)$, then $k \leq d(\varphi)$.
    \begin{lemma}
      \label{l:modalized} Let $\calF = (W, R)$ be a Kripke frame, $w \in W$, and
      $\varphi \in \mathrm{Fm}$. If valuations $\vartheta$ and $\vartheta'$ on $\calF$ are such that
      \begin{gather}
        \vartheta'(p) \cap \bigcup_{k \in D_p(\varphi)} R^k(w) = \vartheta(p) \cap \bigcup_{k \in
        D_p(\varphi)} R^k(w) \label{eq:modalized}
      \end{gather}
      for all $p \in \mathrm{PVar}$, then
      $(\calF, \vartheta), w \vDash \varphi \Iff (\calF, \vartheta'), w \vDash \varphi$.
    \end{lemma}
    \begin{proof}
      We proceed by induction on construction of $\varphi$. For $\varphi \equiv \bot$, the statement
      is trivial. If $\varphi \equiv p \in \mathrm{PVar}$, then $D_p(\varphi) = \{0\}$ and,
      by~\cref{eq:modalized}, $w \in \vartheta'(p) \Iff w \in \vartheta(p)$. The induction step for
      $\varphi \equiv \psi \impl \eta$ is straightforward.

      Consider the case $\varphi \equiv \Dmd\psi$. By~\cref{eq:modalized}, for all
      $p \in \mathrm{PVar}$,
      \begin{gather*}
        \vartheta'(p) \cap \bigcup_{k \in D_p(\psi)} R^{k+1}(w) = \vartheta(p) \cap \bigcup_{k \in
        D_p(\psi)} R^{k+1}(w).
      \end{gather*}
      Notice that, for $v \in R(w)$, $R^k(v) \subseteq R^{k+1}(w)$, whence, for all
      $p \in \mathrm{PVar}$,
      \begin{gather*}
        \vartheta'(p) \cap \bigcup_{k \in D_p(\psi)} R^{k}(v) = \vartheta(p) \cap \bigcup_{k \in
        D_p(\psi)} R^{k}(v)
      \end{gather*}
      By the induction hypothesis, $v \in \vartheta(\psi) \Iff v \in \vartheta'(\psi)$. Thus,
      $w \in \vartheta(\Dmd\psi) \Iff w \in \vartheta'(\Dmd\psi)$.
    \end{proof}
    \begin{proposition}
      \label{pr:pretrans} The following conditions on a consistent logic $\Lambda$ are equivalent:
      \begin{enumerate}
        \item Transitive closure is expressible in $\Lambda$, that is, there is
              $\chi \in \mathrm{Fm}(p)$ such that, for every $\Lambda$-model $(W, R, \vartheta)$,
              \begin{gather*}
                \vartheta(\chi) = \{w \in W \mid R^+(w) \cap \vartheta(p) \neq \emptyset \};
              \end{gather*}
        \item Reflexive transitive closure is expressible in $\Lambda$, that is, there is
              $\chi' \in \mathrm{Fm}(p)$ such that, for every $\Lambda$-model $(W, R, \vartheta)$,
              \begin{gather}
                \vartheta(\chi') = \{w \in W \mid R^*(w) \cap \vartheta(p) \neq \emptyset \}
                \label{eq:chi-prime};
              \end{gather}
        \item There is $n \in \omega$ such that $\Lambda$ is $n$-transitive, that is,
              $\Lambda \vdash \Dmd^{n+1}p \impl \bigvee_{k=1}^n \Dmd^k p$;
        \item There is $n' \in \omega$ such that $\Lambda$ is weakly $n'$-transitive, that is,
              $\Lambda \vdash \Dmd^{n'+1}p \impl \bigvee_{k=0}^{n'} \Dmd^k p$.
      \end{enumerate}
    \end{proposition}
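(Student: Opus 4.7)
The plan is to close the cycle $(3) \Rightarrow (1) \Rightarrow (2) \Rightarrow (4) \Rightarrow (3)$, keeping three of the implications as short constructive arguments and concentrating the real work in $(2) \Rightarrow (4)$. For $(3) \Rightarrow (1)$, take $\chi := \bigvee_{k=1}^{n}\Dmd^k p$; a short induction on $m$ using the $n$-transitivity axiom together with monotonicity and the $\mathrm{K}$-axiom $\Dmd(p \vee q) \impl \Dmd p \vee \Dmd q$ gives $\Lambda \vdash \Dmd^m p \impl \chi$ for every $m \geq 1$, so in any $\Lambda$-model $(W, R, \vartheta)$ the set $\vartheta(\chi)$ is exactly $\{w \mid R^+(w) \cap \vartheta(p) \neq \emptyset\}$. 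For $(1) \Rightarrow (2)$, put $\chi' := p \vee \chi$, using $R^*(w) = \{w\} \cup R^+(w)$. For $(4) \Rightarrow (3)$, substitute $\Dmd p$ for $p$ in the weak $n'$-transitivity axiom; the right-hand side becomes $\bigvee_{k=1}^{n'+1}\Dmd^k p$, yielding $(n'+1)$-transitivity.

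For the main step $(2) \Rightarrow (4)$, let $n' := d(\chi')$ and suppose for contradiction that the formula $\theta := \Dmd^{n'+1}p \wedge \bigwedge_{k=0}^{n'}\neg\Dmd^k p$ is $\Lambda$-consistent. Lindenbaum's lemma gives a canonical world $w \in W_\Lambda$ at which $\theta$ holds, so $R^{n'+1}_\Lambda(w) \cap \vartheta_\Lambda(p) \neq \emptyset$ while $R^k_\Lambda(w) \cap \vartheta_\Lambda(p) = \emptyset$ for all $k \in \{0, \ldots, n'\}$. Since $\calM_\Lambda$ is a $\Lambda$-model, hypothesis $(2)$ applied to it yields $\calM_\Lambda, w \vDash \chi'$, the required witness lying in $R^{n'+1}_\Lambda(w) \subseteq R^*_\Lambda(w)$.

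The decisive move is to construct a second $\Lambda$-model on the frame $\calF_\Lambda$ in which $\chi'$ is still forced to be true at $w$ but the defining equivalence \cref{eq:chi-prime} compels it to be false. Set $\vartheta'(p) := \emptyset$ and $\vartheta'(q) := \vartheta_\Lambda(q)$ for $q \neq p$. Substitution closure of $\Lambda$ ensures that for every $\psi \in \Lambda$ the result of substituting $\bot$ for $p$ in $\psi$ lies in $\Lambda$ and hence is true in $\calM_\Lambda$; a routine substitution-lemma calculation identifies this with truth of $\psi$ in $(\calF_\Lambda, \vartheta')$, so the latter is a $\Lambda$-model. Since $D_p(\chi') \subseteq \{0, \ldots, n'\}$, both $\vartheta_\Lambda(p)$ and $\vartheta'(p)$ are empty on $\bigcup_{k \in D_p(\chi')} R^k_\Lambda(w)$, so \cref{l:modalized} transfers truth of $\chi'$ at $w$ from $\calM_\Lambda$ to the new model. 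On the other hand, applying $(2)$ to this new $\Lambda$-model gives $(\calF_\Lambda, \vartheta'), w \vDash \chi' \iff R^*_\Lambda(w) \cap \vartheta'(p) \neq \emptyset$, and the right-hand side is empty, a contradiction. The main obstacle is precisely this step: since $\Lambda$ is not assumed canonical, one cannot arbitrarily swap the valuation inside $\calM_\Lambda$, and the substitution $p \mapsto \bot$ is the device that manufactures an admissible replacement valuation on $\calF_\Lambda$.
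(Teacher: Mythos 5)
Your proof is correct and follows essentially the same route as the paper: the three easy implications are handled by the same witnesses ($\chi = \bigvee_{k=1}^{n}\Dmd^k p$, $\chi' = \chi \vee p$, $n = n'+1$), and the key step $(2)\Rightarrow(4)$ is the paper's argument — take $n' = d(\chi')$, obtain a canonical world via Lindenbaum's lemma, build an alternative $\Lambda$-model on $\calF_\Lambda$ with $\vartheta'(p)=\emptyset$ via substitution closure, and derive a contradiction from \cref{l:modalized}. The only cosmetic difference is that the paper empties \emph{all} variables rather than just $p$; both work.
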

    Logics satisfying these equivalent conditions are known under various names: \emph{weakly
    transitive}~\cite{Kra99,KowKra06}, \emph{conically expressive}~\cite{GabSkvoSheht09}, and
    \emph{pretransitive}~\cite{KudShap11}. In the present paper we use the last term. Also, it is
    more common to consider reflexive transitive closure rather than transitive one, so weakly
    $n$-transitive logics are often called simply \emph{$n$-transitive}~\cite{Kra99,KudShap17}. In
    fact, equivalence $2 \Iff 4$ is known (it follows easily from~\cite[Proposition 5]{KowKra06}).
    However, we provide a complete proof of~\cref{pr:pretrans} here:
    \begin{proof}
      Implications $4 \Rightarrow 3$, $3 \Rightarrow 1$, and $1 \Rightarrow 2$ are trivial: it is
      sufficient to set $n \defeq n' + 1$, $\chi \defeq \bigvee_{k=1}^n \Dmd^k p$, and
      $\chi' \defeq \chi \vee p$ respectively.

      Let us check that $2 \Rightarrow 4$. Suppose that $\chi'$ satisfies the condition
      \cref{eq:chi-prime} and, for $n' \defeq d(\chi')$, the implication
      $\Dmd^{n'+1}p \impl \bigvee_{k=0}^{n'} \Dmd^k p$ is not derivable in $\Lambda$. Then, by
      Lindenbaum's lemma, there is a world $w \in W_\Lambda$ such that
      $\calM_\Lambda, w \vDash \Dmd^{n'+1}p$ and $\calM_\Lambda, w \nvDash \Dmd^k p$ for all
      $k \leq n'$.

      Consider the valuation $\vartheta_\emptyset$ on $\calF_\Lambda$ mapping all variables to
      $\emptyset$. By induction on construction of a formula $\varphi \in \mathrm{Fm}$, one can
      easily check that $\vartheta_\emptyset(\varphi) = \vartheta_\Lambda(\varphi_\bot)$, where
      $\varphi_\bot$ is a variable free formula obtained from $\varphi$ by substituting $\bot$ for
      all variables. Since $\Lambda$ is closed under the substitution rule,
      $\calM_\emptyset \defeq (\calF_\Lambda, \vartheta_\emptyset)$ is a $\Lambda$-model.

      Hence, $\calM_\Lambda$ and $\calM_\emptyset$ are $\Lambda$-models,
      $R_\Lambda^+(w) \cap \vartheta_\Lambda(p) \neq \emptyset$ and
      $R_\Lambda^+(w) \cap \vartheta_\emptyset(p) = \emptyset$. Therefore,
      $\calM_\Lambda, w \vDash \chi'$ and $\calM_\emptyset, w \nvDash \chi'$. However,
      \begin{gather*}
        \vartheta_\Lambda(p) \cap \bigcup_{k \in D_p(\chi')} R_\Lambda^k(p) \subseteq
        \vartheta_\Lambda(p) \cap \bigcup_{k=0}^{n'} R_\Lambda^k(p) = \emptyset =
        \vartheta_\emptyset(p) \cap \bigcup_{k \in D_p(\chi')} R_\Lambda^k(p).
      \end{gather*}
      This is impossible by~\cref{l:modalized}.
    \end{proof}
    A Kripke frame $\calF = (W, R)$ is \defw{$n$-transitive}, if $R^+ = \bigcup_{k=1}^n R^k$. Denote
    \begin{gather*}
      \Dmd^{+n}\varphi \defeq \bigvee_{k=1}^n \Dmd^k\varphi,\quad \Box^{+n}\varphi \defeq
      \bigwedge_{k=1}^n \Box^k\varphi.
    \end{gather*}
    If $\calF$ is $n$-transitive, then, for each $\varphi \in \mathrm{Fm}$ and valuation $\vartheta$
    on $\calF$,
    \begin{gather*}
      \vartheta(\Dmd^{+n}\varphi) = \{w \in W \mid R^+(w) \cap \vartheta(\varphi) \neq \emptyset\}.
    \end{gather*}
    It is easy to check that $\calF$ is $n$-transitive iff $\mathrm{Log}(\calF)$ is $n$-transitive.
    Therefore, all frames of $n$-transitive logic are $n$-transitive. Since $\mathrm{Trans}_n$ is
    canonical, the canonical frame of $n$-transitive logic is $n$-transitive.
  \subsection{Clusters, skeleton, maximal worlds, and Zorn's lemma}
    Let $\calF = (W, R)$ be a frame. Then $R^*$ is a \defw{preorder} (reflexive transitive relation)
    on $W$. The equivalence relation
    \begin{gather*}
      w \sim_R v :\Iff w \mathrel R^* v \wedge v \mathrel R^* w
    \end{gather*}
    is a congruence on $(W, R^*)$ (that is, if $w \sim_R w'$ and $v \sim_R v'$, then
    $w \mathrel R^* v \Iff w' \mathrel R^* v'$). The induced relation $\preceq_R$ on the quotient
    set $W/{\sim}_R$ is a partial order. The equivalence class $[w]_R$ of a world $w \in W$ under
    $\sim_R$ is called the \emph{cluster} of $w$. The poset $(W/{\sim}_R, \preceq_R)$ is called the
    \defw{skeleton} of $\calF$.

    Let $X$ be a subset of $W$. A world $v \in X$ is \emph{maximal in $X$}, if
    $v \mathrel R^* u \Rightarrow u \mathrel R^* v$ for all $u \in X$. We denote by
    $\mathrm{max}(X)$ the set of all worlds which are maximal in $X$. $S \subseteq W$ is a
    \emph{chain}, if, for all $w, v \in S$, $w \mathrel R^* v$ or $v \mathrel R^* w$. A world
    $v \in X$ is an \emph{upper bound of $S$}, if $u \mathrel R^* v$ for all $u \in S$. Note that
    $S = \emptyset$ is a chain and every $x \in X$ is its upper bound.
    \begin{lemma}[Zorn]
      If every chain in $X$ has an upper bound, then $\mathrm{max}(X) \neq \emptyset$.
    \end{lemma}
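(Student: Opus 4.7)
The plan is to reduce the statement to the classical Zorn's lemma for genuine partial orders by passing to the skeleton $(W/{\sim}_R, \preceq_R)$. First I would note that since $\emptyset$ is a chain and the hypothesis guarantees it has an upper bound, $X$ must be nonempty; hence the image $\tilde X \defeq \{[x]_R \mid x \in X\}$ is a nonempty subset of the poset $W/{\sim}_R$.

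The main step is to verify that every $\preceq_R$-chain in $\tilde X$ has an upper bound in $\tilde X$. Given such a chain $\tilde S \subseteq \tilde X$, I would use the axiom of choice to pick a representative $x_C \in X$ from each class $C \in \tilde S$, forming $S \subseteq X$. Two representatives $x_C, x_{C'}$ are automatically $R^*$-comparable because $C \preceq_R C'$ or $C' \preceq_R C$, so $S$ is a chain in $X$ in the sense defined above. By hypothesis $S$ has an upper bound $v \in X$, and it is immediate from the definition of $\preceq_R$ that $[v]_R$ is an upper bound of $\tilde S$ in $\tilde X$.

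Applying the standard Zorn's lemma to the poset $\tilde X$ then yields a class $[v]_R \in \tilde X$ maximal with respect to $\preceq_R$. Unfolding definitions, this means that whenever $u \in X$ satisfies $v \mathrel{R^*} u$, we have $[v]_R \preceq_R [u]_R$, hence $[v]_R = [u]_R$, hence $u \mathrel{R^*} v$. Thus $v \in \mathrm{max}(X)$, as required. I do not expect a genuine obstacle here: the argument is a routine translation between a preorder and its associated partial order, and the only two points requiring minor care are the handling of the empty chain (to get $X \neq \emptyset$) and the explicit choice of representatives when lifting chains from $\tilde X$ back to $X$.
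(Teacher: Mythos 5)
Your proof is correct and follows exactly the route the paper indicates: it reduces the statement to the classical Zorn's lemma by passing to the skeleton, with the empty-chain observation supplying $X \neq \emptyset$ and a choice of representatives lifting chains back from the quotient. The paper leaves these routine verifications to the reader, so your write-up is simply a fleshed-out version of the same argument.
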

    \begin{proof}
      It is easy to derive this version of Zorn's lemma from the usual Zorn's lemma for posets
      applied to the skeleton of the frame $(X, R^*|_X)$.
    \end{proof}
  \subsection{Maximality property}
    The following property of the canonical model was established in~\cite{Fine85} for transitive
    logics and generalized to pretransitive case in~\cite{Shap21,KudShap25}.
    \begin{lemma}[maximality property]
      \label{l:max} Let $\Lambda$ be a pretransitive logic, $\Gamma$ be a set of formulas,
      \begin{gather*}
        X \defeq \{ w \in W_\Lambda \mid \calM_\Lambda, w \vDash \Gamma \}.
      \end{gather*}
      If $\Gamma$ is $\Lambda$-consistent, then $\mathrm{max}(X) \neq \emptyset$.
    \end{lemma}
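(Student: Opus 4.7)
\noindent My plan is to apply Zorn's lemma from the preceding subsection to $X$: I will show that every chain $S \subseteq X$ has an upper bound in $X$ with respect to $R_\Lambda^*$. (The empty chain is handled automatically, since $\Gamma$ is $\Lambda$-consistent, so $X \neq \emptyset$ by Lindenbaum.) Because $\Lambda$ is pretransitive, \cref{pr:pretrans} lets me fix $n \geq 1$ with $\Lambda \vdash \mathrm{Trans}_n$; this axiom is Sahlqvist and hence canonical, so $\calF_\Lambda$ is $n$-transitive and $R_\Lambda^*(w) = \bigcup_{k=0}^n R_\Lambda^k(w)$ for every $w \in W_\Lambda$. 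This bound is what converts ``true at every $R_\Lambda^*$-successor of $u$'' into a single modal formula.

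The set that drives the argument is, for each $u \in W_\Lambda$,
\[
  \Gamma^u \defeq \Bigl\{\varphi \in \mathrm{Fm} \;\Big|\; \calM_\Lambda, u \vDash \bigwedge_{k=0}^n \Box^k \varphi\Bigr\}.
\]
Two things follow at once: $\calM_\Lambda, u \vDash \Gamma^u$ (take $k = 0$), and $u \mathrel R_\Lambda^* u'$ implies $\Gamma^u \subseteq \Gamma^{u'}$ by transitivity of $R_\Lambda^*$. Setting $\Gamma' \defeq \Gamma \cup \bigcup_{u \in S} \Gamma^u$, I claim $\Gamma'$ is $\Lambda$-consistent: any finite subset of it mentions only finitely many $u_1, \dots, u_r \in S$, which --- being a finite chain --- admits a maximum $u^* \in S$, so by the monotonicity above the whole finite subset is contained in $\Gamma \cup \Gamma^{u^*}$ and thus realised at $u^*$ itself (using $u^* \in S \subseteq X$). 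Lindenbaum's lemma then supplies $v \in W_\Lambda$ with $\calM_\Lambda, v \vDash \Gamma'$; in particular $v \in X$.

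It remains to verify that $u \mathrel R_\Lambda^* v$ for every $u \in S$ --- an $R^*$-analogue of tightness that I would prove by contrapositive. Assume $u \not\mathrel R_\Lambda^k v$ for every $k \in \{0, 1, \dots, n\}$. Claim~1 of the generalized tightness lemma then supplies, for each such $k$, a formula $\varphi_k$ with $\calM_\Lambda, u \vDash \Box^k \varphi_k$ and $\calM_\Lambda, v \nvDash \varphi_k$. The disjunction $\varphi \defeq \bigvee_{k=0}^n \varphi_k$ lies in $\Gamma^u$ by monotonicity of $\Box^k$, so $\calM_\Lambda, v \vDash \varphi$, contradicting $\calM_\Lambda, v \nvDash \varphi_k$ for all $k$. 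Hence $v$ is an upper bound of $S$ in $X$, and Zorn's lemma yields $\mathrm{max}(X) \neq \emptyset$.

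The only place that requires real care is the choice of $\Gamma^u$. The seemingly more natural set $\{\varphi \mid \calM_\Lambda, u \vDash \bigwedge_{k=1}^n \Box^k \varphi\}$ would let me cite Claim~2 of generalized tightness verbatim to get $u \mathrel R_\Lambda^+ v$, but it contains $\bot$ whenever $u$ has no $R_\Lambda^+$-successor, destroying the consistency argument. Including the $k = 0$ conjunct guarantees $\calM_\Lambda, u \vDash \Gamma^u$ for free, at the small cost of the short contrapositive step above.
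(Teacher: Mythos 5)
Your proof is correct, but it takes a genuinely different route from the paper's. The paper argues by contradiction: assuming $\mathrm{max}(X) = \emptyset$, it extracts a chain $S$ with no upper bound, forms $\Xi = \{\xi \mid \exists w \in S\, (\calM_\Lambda, w \vDash \Box^{+n}\xi)\}$, and then splits into two cases --- if $\Gamma \cup \Xi$ is consistent, Lindenbaum plus claim 2 of generalized tightness produces an upper bound of $S$ in $X$ (contradiction), and if it is inconsistent, a separate argument shows that some $w_k \in S$ is already maximal in $X$ (again a contradiction). That second case is forced precisely by the defect you identified: $\Xi$ is built from strict successors only, so it can contain $\bot$ (e.g.\ when a world of $S$ has no successors), and its consistency with $\Gamma$ cannot be taken for granted. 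Your choice of $\Gamma^u = \{\varphi \mid \calM_\Lambda, u \vDash \bigwedge_{k=0}^n \Box^k\varphi\}$, i.e.\ including the $k=0$ conjunct, makes each $\Gamma^u$ realised at $u$ itself, so consistency of $\Gamma \cup \bigcup_{u \in S}\Gamma^u$ follows from one finite-chain-has-a-maximum observation and no case split is needed; you then verify the hypothesis of Zorn's lemma positively rather than refuting its negation. The price is mild: you only conclude $u \mathrel R_\Lambda^* v$ rather than $u \mathrel R_\Lambda^+ v$, which is exactly what an upper bound requires, and you must supply a reflexive analogue of claim 2 of generalized tightness, which you do correctly by the same disjunction trick the paper uses to prove claim 2. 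Your justification of the monotonicity $\Gamma^u \subseteq \Gamma^{u'}$ for $u \mathrel R_\Lambda^* u'$ is also sound, since $n$-transitivity of the canonical frame (via canonicity of $\mathrm{Trans}_n$) identifies $\bigwedge_{k=0}^n \Box^k\varphi$ with truth of $\varphi$ throughout $R_\Lambda^*(u)$. Overall your argument is arguably cleaner and uniform; the paper's version yields the marginally stronger by-product that the upper bound can be taken to be a \emph{strict} $R_\Lambda^+$-successor of every element of the chain when such a bound exists, but this extra information is not used in the statement.
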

    \begin{proof}
      Let $n$ be such that $\Lambda$ is $n$-transitive. Suppose that $\Gamma$ is
      $\Lambda$-consistent and $\mathrm{max}(X) = \emptyset$. Then, by Zorn's lemma, there is a
      chain $S$ in $X$ without an upper bound. Consider the set
      \begin{gather*}
        \Xi \defeq \{ \xi \mid \E{ w \in S}(\calM_\Lambda, w \vDash \Box^{+n}\xi) \}.
      \end{gather*}
      If the set $\Gamma \cup \Xi$ is $\Lambda$-consistent, then, by Lindenbaum's lemma, there is
      $v \in X$ such that $\calM_\Lambda, v \vDash \Xi$. By generalized tightness property,
      $w \mathrel R^+_\Lambda v$ for all $w \in S$. Therefore, $v \in X$ is an upper bound of $S$.
      Contradiction.

      If $\Gamma \cup \Xi$ is $\Lambda$-inconsistent, then
      $\Lambda \vdash \bigwedge_{i<r}\varphi_i \wedge \bigwedge_{j<s}\xi_j \impl \bot$ for some
      $\varphi_i \in \Gamma$, $\xi_j \in \Xi$. Note that $s > 0$, since $\Gamma$ is
      $\Lambda$-consistent. Let $w_j \in S$ be such that $\calM_\Lambda, w_j \vDash \Box^{+n}\xi_j$.
      Since $S$ is a chain, there is $k < s$ such that $w_j \mathrel R^*_\Lambda w_k$ for all
      $j < s$. Suppose that there is a world $u \in X \cap R^+_\Lambda(w_k)$. Then
      $w_j \mathrel R^+_\Lambda u$ for all $j < s$, therefore all $\varphi_i$ and $\xi_j$ are true
      in $u$, which is impossible, since $\calM_\Lambda \vDash \Lambda$. Thus, $w_k$ is maximal in
      $X$, contradicting our assumption.
    \end{proof}
    \begin{corollary}
      \label{cor:max} Let $\Lambda$ be a pretransitive logic, $w \in W_\Lambda$,
      $\psi \in \mathrm{Fm}$. If $\calM_\Lambda, w \vDash \Dmd\psi$, then
      \begin{gather*}
        \mathrm{max}\bigl(R_\Lambda(w) \cap \vartheta_\Lambda(\psi)\bigr) \neq \emptyset.
      \end{gather*}
    \end{corollary}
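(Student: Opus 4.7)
The plan is to reduce the corollary directly to the maximality property (\cref{l:max}) by exhibiting a $\Lambda$-consistent set $\Gamma$ whose worlds of truth in $\calM_\Lambda$ are exactly the set $X \defeq R_\Lambda(w) \cap \vartheta_\Lambda(\psi)$. The natural candidate is
\begin{gather*}
  \Gamma \defeq \{\psi\} \cup \{\varphi \mid \calM_\Lambda, w \vDash \Box\varphi\}.
\end{gather*}

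First I would verify that $X = \{v \in W_\Lambda \mid \calM_\Lambda, v \vDash \Gamma\}$. The inclusion $\subseteq$ is immediate: if $w \mathrel R_\Lambda v$ then by truth of $\Lambda$ in $\calM_\Lambda$ every $\Box\varphi$ true at $w$ gives $\varphi$ true at $v$, and $v \in \vartheta_\Lambda(\psi)$ supplies $\psi$. For $\supseteq$, truth of $\psi$ at $v$ gives $v \in \vartheta_\Lambda(\psi)$, and the tightness of $\calM_\Lambda$ (property~3 of the canonical model) yields $w \mathrel R_\Lambda v$.

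Next I would show $\Gamma$ is $\Lambda$-consistent. If it were not, there would be $\varphi_0, \dots, \varphi_{r-1}$ with $\calM_\Lambda, w \vDash \Box\varphi_i$ and $\Lambda \vdash \bigwedge_{i<r}\varphi_i \wedge \psi \impl \bot$. Then $\Lambda \vdash \bigwedge_{i<r}\varphi_i \impl \neg\psi$, so by monotonicity $\Lambda \vdash \bigwedge_{i<r}\Box\varphi_i \impl \Box\neg\psi$, forcing $\calM_\Lambda, w \vDash \Box\neg\psi$ and contradicting $\calM_\Lambda, w \vDash \Dmd\psi$. Applying \cref{l:max} then gives $\mathrm{max}(X) \neq \emptyset$.

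There is essentially no obstacle here: the argument is a routine unpacking of the canonical-model machinery, and the only delicate point is recognising that the ``$v \in R_\Lambda(w)$'' half of the description of $X$ must be expressed via $\Box$-formulas at $w$ so that tightness can be invoked, rather than via the accessibility relation directly.
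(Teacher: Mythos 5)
Your proposal is correct and follows exactly the paper's own route: the same set $\Gamma = \{\psi\} \cup \{\varphi \mid \calM_\Lambda, w \vDash \Box\varphi\}$, the same use of tightness to identify its truth set with $R_\Lambda(w) \cap \vartheta_\Lambda(\psi)$, and the same appeal to \cref{l:max}. You merely spell out the consistency argument that the paper leaves as ``easy to see,'' and you do so correctly.
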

    \begin{proof}
      It is easy to see that the set $\Gamma \defeq \{ \varphi \in \mathrm{Fm} \mid \calM_\Lambda, w
      \vDash \Box\varphi\} \cup \{\psi\}$ is $\Lambda$-consistent. By tightness,
      \begin{gather*}
        R_\Lambda(w) \cap \vartheta_\Lambda(\psi) = \{ u \in W_\Lambda \mid \calM_\Lambda, u \vDash
        \Gamma\}.
      \end{gather*}
      Now it is sufficient to apply~\cref{l:max}.
    \end{proof}
 \section{Pretransitive analogues of K4, wK4, and GL}
  \label{s:class} Denote by $\mathrm{Fm}^+(p)$ the set of all formulas built from
  $p \in \mathrm{PVar}$ and the constant $\top$ using connectives $\wedge$ and $\Dmd$. Such formulas
  are called \defw{strictly positive}. For a formula $\varphi$, let
  $\underbar{d}_p(\varphi) \defeq \mathrm{min}(D_p(\varphi)\cup\{\infty\})$. In the rest of the
  work, we assume that $\beta, \gamma \in \mathrm{Fm}^+(p)$, $\beta$ and $\gamma$ contain $p$,
  $\underbar{d}_p(\beta) \geq 1$ and $\underbar{d}_p(\gamma) \geq 2$.

  Consider the logics $\mathrm{K4}_\gamma = \mathrm{K} + {\rm A}4_\gamma$,
  $\mathrm{wK4}_\gamma = \mathrm{K} + {\rm Aw}4_\gamma$, and
  $\mathrm{GL}_{\Dmd\beta} = \mathrm{K} + {\rm AL\ddot ob}_{\Dmd\beta}$, where
  \begin{align*}
    {\rm A}4_\gamma &= \gamma(p) \impl \Dmd p,\quad\\
    {\rm Aw}4_\gamma &= \gamma(p) \impl \Dmd p \vee p,\\
    {\rm AL\ddot ob}_{\Dmd\beta} &= \Dmd p \impl \Dmd(p \wedge \neg\beta(p)).
  \end{align*}
  Notice that this family of logics is slightly more general, then the family $C$ from the
  introduction. For example, it can be shown that there is no $\alpha \in \mathrm{Fm}^+(p)$ such
  that $\mathrm{K4}_{\Dmd^2\alpha} = \mathrm{K4}_{\Dmd^3p \wedge \Dmd^4p}$.

  \subsection{Basic relations between the logics under consideration}
    \begin{lemma}
      \label{l:in-wk4} If $\alpha \in \mathrm{Fm}^+(p)$ contains $p$, then
      $\mathrm{wK4} \vdash \alpha \impl \Dmd p \vee p$.
    \end{lemma}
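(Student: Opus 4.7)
The plan is to prove this by induction on the construction of the strictly positive formula $\alpha$. Since $\alpha \in \mathrm{Fm}^+(p)$ is built from $p$ and $\top$ using $\wedge$ and $\Dmd$, and since $\alpha$ is assumed to contain $p$, the base case reduces to $\alpha = p$, where $p \impl \Dmd p \vee p$ is a propositional tautology and hence in every normal modal logic.

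For the inductive step I would split on the outermost connective. If $\alpha = \alpha_1 \wedge \alpha_2$, then at least one of $\alpha_1, \alpha_2$ contains $p$; applying the induction hypothesis to that conjunct and weakening via the tautology $\alpha_1 \wedge \alpha_2 \impl \alpha_i$ yields $\mathrm{wK4} \vdash \alpha \impl \Dmd p \vee p$. If $\alpha = \Dmd\alpha'$ and $\alpha' = p$, we are done immediately. Otherwise $\alpha'$ contains $p$ and lies in $\mathrm{Fm}^+(p)$, so by induction $\mathrm{wK4} \vdash \alpha' \impl \Dmd p \vee p$; applying the monotonicity rule gives $\mathrm{wK4} \vdash \Dmd\alpha' \impl \Dmd(\Dmd p \vee p)$, and then using the K-axiom $\Dmd(\varphi \vee \psi) \impl \Dmd\varphi \vee \Dmd\psi$ we get $\mathrm{wK4} \vdash \Dmd\alpha' \impl \Dmd^2 p \vee \Dmd p$. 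Finally, the wK4-axiom $\Dmd^2 p \impl \Dmd p \vee p$ collapses the right-hand side to $\Dmd p \vee p$, as required.

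The argument is essentially mechanical; the only subtlety is ensuring the induction hypothesis applies, which uses the fact that strict positivity and containment of $p$ are preserved when descending into subformulas of a strictly positive formula that contains $p$ (either into a conjunct that contains $p$ or into the argument of $\Dmd$). I do not foresee any real obstacle beyond bookkeeping these cases carefully.
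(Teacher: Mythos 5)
Your proof is correct and follows essentially the same route as the paper: induction on the construction of $\alpha$, with the conjunction case handled by weakening and the $\Dmd$ case by monotonicity, distribution of $\Dmd$ over $\vee$, and the wK4 axiom $\Dmd^2 p \impl \Dmd p \vee p$. The only cosmetic difference is that you single out the subcase $\alpha = \Dmd p$, which the general inductive step already covers.
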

    \begin{proof}
      We proceed by induction on construction of $\alpha$. For $\alpha \equiv \top$ the premise is
      false. For $\alpha \equiv p$, the statement is trivial.

      Suppose that $\alpha \equiv \alpha_1 \wedge \alpha_2$ contains $p$. Then, for some
      $i \in \{1, 2\}$, $\alpha_i$ contains $p$. By the induction hypothesis,
      $\mathrm{wK4} \vdash \alpha_i \impl \Dmd p \vee p$. Therefore,
      $\mathrm{wK4} \vdash \alpha \impl \Dmd p \vee p$.

      If $\alpha \equiv \Dmd\alpha_1$ contains $p$, then $\alpha_1$ contains $p$ and
      $\mathrm{wK4} \vdash \alpha_1 \impl \Dmd p \vee p$ by the induction hypothesis. By normality,
      $\mathrm{wK4} \vdash \Dmd\alpha_1 \impl \Dmd\Dmd p \vee \Dmd p$, therefore
      $\mathrm{wK4} \vdash \alpha \impl \Dmd p \vee p$.
    \end{proof}
    \begin{lemma}
      \label{l:in-k4} If $\alpha \in \mathrm{Fm}^+(p)$ contains $p$ and
      $\underbar{d}_p(\alpha) \geq 1$, then $\mathrm{K4} \vdash \alpha \impl \Dmd p$.
    \end{lemma}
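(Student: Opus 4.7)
The plan is to mirror the proof of \cref{l:in-wk4} by induction on the construction of $\alpha$, but strengthen the conclusion using the \texttt{K4}-axiom $\Dmd\Dmd p \impl \Dmd p$. The hypothesis $\underbar{d}_p(\alpha) \geq 1$ rules out $\alpha \equiv p$, while $\alpha \equiv \top$ is excluded because $\alpha$ contains $p$.

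For $\alpha \equiv \alpha_1 \wedge \alpha_2$, at least one conjunct $\alpha_i$ contains $p$. Since $D_p(\alpha) = D_p(\alpha_1) \cup D_p(\alpha_2)$, such $\alpha_i$ automatically satisfies $\underbar{d}_p(\alpha_i) \geq \underbar{d}_p(\alpha) \geq 1$, so the induction hypothesis yields $\mathrm{K4} \vdash \alpha_i \impl \Dmd p$, and therefore $\mathrm{K4} \vdash \alpha \impl \Dmd p$.

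The only nontrivial case is $\alpha \equiv \Dmd\alpha_1$, where $\alpha_1$ contains $p$, and the hypothesis on $\underbar{d}_p$ is automatic. Here I would split on whether $\underbar{d}_p(\alpha_1) \geq 1$ or $\underbar{d}_p(\alpha_1) = 0$. In the first subcase the induction hypothesis gives $\mathrm{K4} \vdash \alpha_1 \impl \Dmd p$, and applying monotonicity together with the \texttt{K4}-axiom $\Dmd\Dmd p \impl \Dmd p$ yields $\mathrm{K4} \vdash \alpha \impl \Dmd p$. In the second subcase the induction hypothesis does not apply, and this is the key obstacle: one needs to recover an implication for $\alpha_1$ from the weaker assumption that $\alpha_1$ merely contains $p$. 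This is exactly what \cref{l:in-wk4} provides, since $\mathrm{wK4} \subseteq \mathrm{K4}$: we get $\mathrm{K4} \vdash \alpha_1 \impl \Dmd p \vee p$, whence by normality $\mathrm{K4} \vdash \Dmd\alpha_1 \impl \Dmd\Dmd p \vee \Dmd p$, and a final application of the \texttt{K4}-axiom collapses this to $\mathrm{K4} \vdash \alpha \impl \Dmd p$.

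The main conceptual point is that the $\Dmd$-case reduces the problem to either an instance at smaller depth with the same strong hypothesis (handled inductively) or to the previous lemma about wK4, with the \texttt{K4}-axiom absorbing the extra $\Dmd$ produced by pushing a modality outward. No further lemmas beyond normality, \cref{l:in-wk4}, and the defining axiom of \texttt{K4} are needed.
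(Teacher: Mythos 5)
Your proof is correct and follows essentially the same route as the paper: induction on the construction of $\alpha$, with the $\Dmd$-case resolved via \cref{l:in-wk4} and the axiom $\Dmd\Dmd p \impl \Dmd p$. The only difference is your case split on $\underbar{d}_p(\alpha_1)$ in the $\Dmd$-case, which is harmless but unnecessary, since the argument via \cref{l:in-wk4} already covers both subcases uniformly (the paper does not split).
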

    \begin{proof}
      The proof is by induction on construction of $\alpha$. For $\alpha \equiv \top$ and
      $\alpha \equiv p$, the premise is false.

      If $\alpha \equiv \alpha_1 \wedge \alpha_2$ contains $p$ and $\underbar{d}_p(\alpha) \geq 1$,
      then, for some $i \in \{1, 2\}$, $\alpha_i$ contains $p$ and
      $\underbar{d}_p(\alpha_i) \geq 1$. By the induction hypothesis,
      $\mathrm{K4} \vdash \alpha_i \impl \Dmd p$, whence $\mathrm{K4} \vdash \alpha \impl \Dmd p$.

      Suppose that $\alpha \equiv \Dmd\alpha_1$ contains $p$. By \cref{l:in-wk4},
      $\mathrm{wK4} \vdash \alpha_1 \impl \Dmd p \vee p$. By normality,
      $\mathrm{wK4} \vdash \Dmd\alpha_1 \impl \Dmd\Dmd p \vee \Dmd p$. Since
      $\mathrm{K4} \supseteq \mathrm{wK4}$ and $\mathrm{K4} \vdash \Dmd\Dmd p \impl \Dmd p$,
      $\mathrm{K4} \vdash \alpha \impl \Dmd p$.
    \end{proof}
    \begin{proposition}
      \label{pr:con} $\mathrm{(w)K4}_\gamma \subseteq \mathrm{(w)K4}$ and
      $\mathrm{GL}_{\Dmd\beta} \subseteq \mathrm{GL}$. In particular, $\mathrm{(w)K4}_\gamma$ and
      $\mathrm{GL}_{\Dmd\beta}$ are consistent.
    \end{proposition}
    \begin{proof}
      By \cref{l:in-wk4,l:in-k4},
      \begin{gather*}
        \mathrm{K4} \vdash \gamma \impl \Dmd p,\quad \mathrm{wK4} \vdash \gamma \impl \Dmd p \vee
        p,\quad\text{and}\quad \mathrm{K4} \vdash \beta \impl \Dmd p.
      \end{gather*}
      By monotonicity,
      $\mathrm{K4} \vdash \Dmd(p \wedge \neg\Dmd p) \impl \Dmd(p \wedge \neg\beta)$. Thus,
      $\mathrm{GL} \vdash \Dmd p \impl \Dmd(p \wedge \neg\beta)$.
    \end{proof}
    Consider the following sequence of formulas:
    \begin{gather*}
      \sigma_1 \defeq p,\quad \sigma_n \defeq p \wedge \Dmd\sigma_{n-1} \quad\text{for } n > 1.
    \end{gather*}
    Let $\calM = (W, R, \vartheta)$ be a Kripke model. It is easy to see that $\sigma_n$ is true at
    a world $v_1$ iff $\calM, v_1 \vDash p$ and there are worlds $v_2, \dots, v_n$ such that
    $v_{i-1} \mathrel R v_i$ and $\calM, v_i \vDash p$ for $i = 2, \dots, n$.
    \begin{lemma}
      \label{l:delta-n} If $\alpha \in \mathrm{Fm}^+(p)$, $\underbar{d}_p(\alpha) \geq k$, and
      $d(\alpha) < k + n$, then $\mathrm{K} \vdash \Dmd^k \sigma_n \impl \alpha$.
    \end{lemma}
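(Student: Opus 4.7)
The plan is to prove the statement by induction on the construction of $\alpha$, showing that for every such $\alpha$ and every pair $(k, n)$ satisfying the hypotheses, $\mathrm{K} \vdash \Dmd^k \sigma_n \impl \alpha$. The intuition is that $\Dmd^k \sigma_n$ forces, along some $R$-path, a witness of $p$ at each depth $k, k+1, \dots, k+n-1$, and every occurrence of $p$ in $\alpha$ lies at some depth in exactly this range (since $\underbar{d}_p(\alpha) \geq k$ bounds from below and $d(\alpha) < k + n$ bounds from above), so $\sigma_n$ supplies enough $p$'s, shifted to depth $k$, to verify $\alpha$.

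The two base cases are immediate. For $\alpha \equiv \top$ the conclusion is a tautology. For $\alpha \equiv p$, the hypothesis $\underbar{d}_p(\alpha) = 0 \geq k$ forces $k = 0$ and $n \geq 1$, and since $p$ is a top-level conjunct of every $\sigma_n$ (obvious by induction on $n$), we have $\mathrm{K} \vdash \sigma_n \impl p$. For $\alpha \equiv \alpha_1 \wedge \alpha_2$, both conjuncts inherit the hypotheses because $\underbar{d}_p(\alpha) = \min\{\underbar{d}_p(\alpha_1), \underbar{d}_p(\alpha_2)\}$ and $d(\alpha) = \max\{d(\alpha_1), d(\alpha_2)\}$, so the induction hypothesis applied to each $\alpha_i$ yields the claim after conjoining.

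The substantive case is $\alpha \equiv \Dmd \alpha_1$, which I would split on whether $k \geq 1$ or $k = 0$. If $k \geq 1$, then $\underbar{d}_p(\alpha_1) \geq k - 1$ (whether or not $\alpha_1$ contains $p$) and $d(\alpha_1) = d(\alpha) - 1 < (k-1) + n$, so the induction hypothesis applied to $\alpha_1$ with parameters $(k-1, n)$ gives $\mathrm{K} \vdash \Dmd^{k-1} \sigma_n \impl \alpha_1$; monotonicity then upgrades this to $\Dmd^k \sigma_n \impl \Dmd \alpha_1 = \alpha$. If $k = 0$, the inequality $d(\alpha_1) < n - 1$ in particular forces $n \geq 2$, so by definition $\sigma_n \equiv p \wedge \Dmd \sigma_{n-1}$; applying the induction hypothesis to $\alpha_1$ with parameters $(0, n-1)$ gives $\mathrm{K} \vdash \sigma_{n-1} \impl \alpha_1$, and monotonicity yields $\Dmd \sigma_{n-1} \impl \Dmd \alpha_1$, whence $\sigma_n \impl \alpha$.

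No step presents a real obstacle; the only subtle point is the handling of the boundary $k = 0$ in the modal case, where one must shift a diamond inside $\sigma_n$ using its recursive definition rather than reducing $k$ further. This is precisely why the sequence $\sigma_n$ is designed so that each diamond sits on top of another $p$-carrying $\sigma_{n-1}$.
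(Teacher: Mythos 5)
Your proof is correct and follows the same structural induction on $\alpha$ as the paper, with the statement quantified over all admissible pairs $(k,n)$. The one place where you diverge is the subcase $k=0$ of $\alpha \equiv \Dmd\alpha_1$, which you resolve by peeling a diamond off $\sigma_n$ (reducing the parameters $(0,n)$ to $(0,n-1)$ via $\sigma_n \equiv p \wedge \Dmd\sigma_{n-1}$) rather than by decrementing $k$; the paper's proof only performs the reduction to $(k-1,n)$ and is silent on $k=0$, where $\Dmd^{k-1}$ makes no sense. Since that subcase genuinely arises in the recursion (already for $\alpha \equiv \Dmd(p\wedge\Dmd p)$ with $k=1$, $n=2$, the induction passes through $\Dmd p$ with $k=0$), your extra case analysis is not pedantry but actually closes a small gap in the paper's own argument.
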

    \begin{proof}
      We proceed by induction on construction of $\alpha$. For $\alpha \equiv \top$ the statement is
      trivial. If $\alpha \equiv p$, then $d(\alpha) = \underbar{d}_p(\alpha) = 0$ and
      $\mathrm{K} \vdash \Dmd^0 \sigma_n \impl \alpha$ for $n > 0$.

      If $\alpha \equiv \alpha_1 \wedge \alpha_2$, then, for $i = 1, 2$,
      \begin{gather*}
        \underbar{d}_p(\alpha_i) \geq \underbar{d}_p(\alpha) \geq k\quad\text{and}\quad d(\alpha_i)
        \leq d(\alpha) < k + n.
      \end{gather*}
      Therefore, $\mathrm{K} \vdash \Dmd^k \sigma_n \impl \alpha_1 \wedge \alpha_2$ by the induction
      hypothesis.

      If $\alpha \equiv \Dmd\alpha_1$, then
      \begin{gather*}
        \underbar{d}_p(\alpha_1) = \underbar{d}_p(\alpha) - 1 \geq k - 1\quad\text{and}\quad
        d(\alpha_1) = d(\alpha) - 1 < k - 1 + n.
      \end{gather*}
      By the induction hypothesis, $\mathrm{K} \vdash \Dmd^{k-1} \sigma_n \impl \alpha_1$, therefore
      $\mathrm{K} \vdash \Dmd^k\sigma_n \impl \alpha$ by monotonicity.
    \end{proof}
    \begin{proposition}
      \label{pr:delta-n} Suppose that $d(\gamma) \leq n + 1$ and $d(\beta) \leq n$. Then
      \begin{gather*}
        \mathrm{K4}_\gamma \supseteq \mathrm{K4}_{\Dmd^2 \sigma_n},\quad \mathrm{wK4}_\gamma
        \supseteq \mathrm{wK4}_{\Dmd^2 \sigma_n}, \quad\text{and}\quad \mathrm{GL}_{\Dmd\beta}
        \supseteq \mathrm{GL}_{\Dmd^2 \sigma_n}.
      \end{gather*}
    \end{proposition}
    \begin{proof}
      By \cref{l:delta-n}, the implications $\Dmd^2 \sigma_n \impl \gamma$ and
      $\Dmd\sigma_n \impl \beta$ are derivable in $\mathrm{K}$. Therefore, the implications
      ${\rm A}4_\gamma \impl {\rm A}4_{\Dmd^2\sigma_n}$,
      ${\rm Aw}4_\gamma \impl {\rm Aw}4_{\Dmd^2\sigma_n}$, and
      ${\rm AL\ddot ob}_{\Dmd\beta} \impl {\rm AL\ddot ob}_{\Dmd^2\sigma_n}$ are also derivable in
      $\mathrm{K}$.
    \end{proof}
    \begin{proposition}
      \label{pr:K4-subset-GL} $\mathrm{wK4}_\gamma \subseteq \mathrm{K4}_\gamma$ and
      $\mathrm{K4}_{\Dmd\beta} \subseteq \mathrm{GL}_{\Dmd\beta}$.
    \end{proposition}
    \begin{proof}
      The first inclusion is trivial. For the second, consider the formula
      $\varphi \defeq \beta(p) \vee p$. By induction on construction of $\beta$, one can easily
      check that $\mathrm{K} \vdash \beta(p) \impl \beta(\varphi)$, therefore
      \begin{gather}
        \mathrm{K} \vdash \varphi \impl \beta(\varphi) \vee p. \label{eq:u-1}
      \end{gather}
      Also, $\mathrm{K} \vdash \beta(p) \impl \varphi$, whence
      \begin{align*}
        \mathrm{GL}_\gamma \vdash \Dmd\beta(p) &\impl \Dmd\varphi
        &&\text{by monotonicity}\\
        &\impl \Dmd(\varphi \wedge \neg\beta(\varphi))
        &&\text{by }{\rm AL\ddot ob}_{\Dmd\beta}\\
        &\impl \Dmd\bigl((\beta(\varphi) \vee p) \wedge \neg\beta(\varphi)\bigr)
        &&\text{by \cref{eq:u-1}}\\
        &\impl \Dmd p.
      \end{align*}
    \end{proof}
    \begin{proposition}
      \label{pr:GL-frames} Let $\calF = (W, R)$ be a Kripke frame. $\mathrm{GL}_{\Dmd\beta}$ is
      valid in $\calF$ iff $\calF \vDash \mathrm{wK4}_{\Dmd\beta}$ and $R$ is conversely
      well-founded.
    \end{proposition}
    \begin{proof}
      Suppose that $R$ is not conversely well-founded. Then there is a sequence of worlds
      $V = \{ v_k \mid k \in \omega \}$ such that $v_k \mathrel R v_{k+1}$ for all $k \in \omega$.
      Let $\vartheta(p) \defeq V$, $n \defeq d(\beta)$. Clearly,
      $\vartheta(\Dmd\sigma_n) \supseteq V$, whence $\vartheta(\beta) \supseteq V$ by
      \cref{l:delta-n}. Therefore, $\vartheta(p) \cap \vartheta(\neg\beta) = \emptyset$ and
      $(\calF, \vartheta), v_1 \nvDash \Dmd p \impl \Dmd(p \wedge \neg\beta)$. Thus,
      $\mathrm{GL}_{\Dmd \beta}$ is not valid in $\calF$.

      Now let $\calF$ be a $\mathrm{wK4}_{\Dmd\beta}$-frame with conversely well-founded relation
      $R$. Suppose that, for some valuation $\vartheta$, ${\rm AL\ddot ob}_{\Dmd\beta}$ is false at
      $w \in W$ in the model $\calM = (\calF, \vartheta)$. Then $\Dmd p$ and $\Box(p \impl \beta)$
      are true at $w$. Consider the set $X \defeq R(w) \cap \vartheta(p)$. Since $R$ is conversely
      well-founded, there is $v \in X$ such that $X \cap R^+(v) = \emptyset$. Note that $\beta$ is
      true in $v$, since $\calM, w \vDash \Box(p \impl \beta)$.

      Consider the valuation $\vartheta'(p) \defeq \vartheta(p) \cap R^+(v)$. Since
      $\underbar{d}_p(\beta) \geq 1$, by \cref{l:modalized}, $\calM', v \vDash \beta$, therefore
      $\calM', w \vDash \Dmd\beta$ and, since $\mathrm{wK4}_{\Dmd\beta}$ is valid in $\calF$,
      $\calM', w \vDash \Dmd p \vee p$. $w \mathrel R v$ and $R$ is conversely well-founded, whence
      $w \notin R^+(v)$ and $\calM', w \nvDash p$. Thus, $\calM', w \vDash \Dmd p$ and the set
      \begin{gather*}
        R(w) \cap \vartheta'(p) = R(w) \cap \vartheta(p) \cap R^+(v) = X \cap R^+(v)
      \end{gather*}
      is not empty. Contradiction.
    \end{proof}
    Notice that formulas ${\rm A}4_\gamma$ are Sahlqvist and, therefore, have corresponding
    first-order conditions on Kripke frames. However, in general, these conditions are quite
    cumbersome. Therefore, we write them out only for the case $\gamma = \Dmd^2\sigma_n$:
    $\calF = (W, R)$ is a $\mathrm{K4}_{\Dmd^2\sigma_n}$-frame iff
    \begin{gather}
      \A{ w, u, v_1, \dots, v_n \in W} \left(w \mathrel R u \mathrel R v_1 \mathrel R \dots
      \mathrel R v_n \Rightarrow \bigvee_{i=1}^nw \mathrel R v_i\right). \label{eq:4-cond-local}
    \end{gather}
    \begin{proposition}
      \label{pr:inclusions} The following inclusions hold:
      \begin{center}
        \begin{tikzcd}[column sep = tiny]
          |[xshift=-.6cm]|\mathrm{GL} = \mathrm{GL}_{\Dmd^2\sigma_1} \arrow[d,symbol=\supset]
          \arrow[r,symbol=\supset]
          &[-1.4cm]|[xshift=-1.2cm]| \mathrm{GL}_{\Dmd^2\sigma_2} \arrow[d,symbol=\supset]
          \arrow[r,symbol=\supset] &[-1.4cm]|[xshift=-1.2cm]| \mathrm{GL}_{\Dmd^2\sigma_3}
          \arrow[d,symbol=\supset] \arrow[r,symbol=\supset] &[-1.4cm]|[xshift=-1.2cm]| \cdots
          \\[-8pt]
          \mathrm{K4} = \mathrm{K4}_{\Dmd^2\sigma_1} \arrow[d,symbol=\supset]
          \arrow[r,symbol=\supset]
          &[-.8cm]|[xshift=-.6cm]| \mathrm{K4}_{\Dmd^2\sigma_2} \arrow[d,symbol=\supset]
          \arrow[r,symbol=\supset] &[-.8cm]|[xshift=-.6cm]| \mathrm{K4}_{\Dmd^2\sigma_3}
          \arrow[d,symbol=\supset] \arrow[r,symbol=\supset] &[-.8cm]|[xshift=-.6cm]| \cdots \\[-8pt]
          |[xshift=.6cm]| \mathrm{wK4} = \mathrm{wK4}_{\Dmd^2\sigma_1} \arrow[ru,symbol=\supset]
          \arrow[r,symbol=\supset]
          &[-.8cm]|[xshift=.6cm]| \mathrm{wK4}_{\Dmd^2\sigma_2} \arrow[ru,symbol=\supset]
          \arrow[r,symbol=\supset] &[-.8cm]|[xshift=.6cm]| \mathrm{wK4}_{\Dmd^2\sigma_3}
          \arrow[ru,symbol=\supset] \arrow[r,symbol=\supset] &[-.8cm]|[xshift=.6cm]| \cdots
        \end{tikzcd}
      \end{center}
    \end{proposition}
    \begin{proof}
      Let us check that
      $\mathrm{wK4}_{\Dmd^2 \sigma_n} \supseteq \mathrm{K4}_{\Dmd^2 \sigma_{n+1}}$. Since
      $\mathrm{K} \vdash \sigma_{n+1}(p) \impl \sigma_n(p \wedge \Dmd p)$,
      \begin{align*}
        \mathrm{wK4}_{\Dmd^2\sigma_n} \vdash \Dmd^2 \sigma_{n+1}(p) &\impl \Dmd^2 \sigma_n(p \wedge
        \Dmd p)
        &&\text{by monotonicity}\\
        &\impl \Dmd(p \wedge \Dmd p) \vee p \wedge \Dmd p
        &&\text{by }{\rm Aw}4_{\Dmd^2\sigma_n}\\
        &\impl \Dmd p.
      \end{align*}
      The remaining non-strict inclusions follow from \cref{pr:delta-n} and \cref{pr:K4-subset-GL}.

      Now, we show that inclusions are strict. Consider the successor relation on natural numbers:
      $S \defeq \{(n, n+1) \mid n \in \omega\}$. For $n \in \omega$, let $\calF_n \defeq (n, S|_n)$,
      where we identify $n$ with the set $\{0, \dots, n-1\}$. It is easy to see that
      $\calF_{n+2} \vDash \neg\Dmd^2\sigma_{n+1}$ and $\calF_{n+2}$ is conversely well-founded.
      Therefore, $\calF_{n+2}$ is a $\mathrm{GL}_{\Dmd^2\sigma_{n+1}}$-frame by~\cref{pr:GL-frames}.
      At the same time, $\calF_{n+2} \nvDash \mathrm{wK4}_{\Dmd^2\sigma_n}$. Thus, inclusions
      between logics with different indices are strict.

      It remains to show that $\mathrm{GL}_{\Dmd^2\sigma_n} \supsetneq \mathrm{K4}_{\Dmd^2\sigma_n}
      \supsetneq \mathrm{wK4}_{\Dmd^2\sigma_n}$. For the first inclusion, consider a reflexive
      point: $\mathrm{GL}_{\Dmd^2\sigma_n}$ is not valid on it by~\cref{pr:GL-frames}, but it is a
      $\mathrm{K4}$-frame, hence $\mathrm{K4}_{\Dmd^2\sigma_n}$ is valid on it. For the second,
      consider the frames $\calF'_n \defeq (n, S|_n \cup \{n-1,0\})$. It is easy to see that
      $\calF'_n \vDash \Dmd^2\sigma_n \impl p$, but $\calF'_n \nvDash \Dmd^2\sigma_n \impl \Dmd p$.
      Therefore, $\calF'_n$ is a $\mathrm{wK4}_{\Dmd^2\sigma_n}$-frame, but not a
      $\mathrm{K4}_{\Dmd^2\sigma_n}$-frame.
    \end{proof}
    As we have shown, the logics $\mathrm{K4}_{\Dmd^2\sigma_n}$ are linearly ordered by inclusion.
    It is easy to check that the logics $\mathrm{K4}_{\Dmd^np}=\mathrm{K4}^1_n$ form a lattice by
    inclusion. More precisely,
    \begin{gather*}
      \mathrm{K4}^1_n \subseteq \mathrm{K4}^1_k \Iff (n-1) \text{ is divisible by } (k-1).
    \end{gather*}
    In general, the inclusion relation between logics $\mathrm{K4}_\gamma$ is more sophisticated. It
    is even not clear when two such logics coincide. Trivially,
    $\mathrm{K4}_{\gamma_1} = \mathrm{K4}_{\gamma_2}$ if $\mathrm{K} \vdash \gamma_1 \iff \gamma_2$.
    However, this condition is far from being necessary:
    $\mathrm{K4}_{\Dmd^2p \wedge \Dmd^3p} = \mathrm{K4}_{\Dmd^3p}$, despite the fact that
    $\Dmd^2p \wedge \Dmd^3p$ and $\Dmd^3p$ are not equivalent even in $\mathrm{K4}$.
    \begin{corollary}
      \label{cor:inclusions} The following logics contain $\mathrm{K4}_{\Dmd^2\sigma_n}$:
      \begin{gather*}
        \mathrm{K4}_\gamma, d(\gamma) \leq n + 1;\quad \mathrm{wK4}_\gamma, d(\gamma) \leq
        n;\quad\text{and}\quad \mathrm{GL}_\beta, d(\beta) \leq n.
      \end{gather*}
    \end{corollary}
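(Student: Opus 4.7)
The three claims all follow by assembling the previously-established relationships between these logics, plus some careful bookkeeping on modal depths. Throughout, $\sigma_n$ has $\underbar{d}_p(\sigma_n) = 0$ and $d(\sigma_n) = n-1$, so $\Dmd^2\sigma_n$ has modal depth $n+1$. Also, the hypothesis $\underbar{d}_p(\gamma) \geq 2$ forces $d(\gamma) \geq 2$, so the cases $n = 0$ (and, for the second claim, $n = 1$) are vacuous, and we may assume $n \geq 1$ (resp. $n \geq 2$) throughout.

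For the first claim, $\mathrm{K4}_\gamma \supseteq \mathrm{K4}_{\Dmd^2\sigma_n}$ when $d(\gamma) \leq n+1$ is exactly the first inclusion in \cref{cor:delta-n}.

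For the third claim, suppose $d(\beta) \leq n$. Then $d(\Dmd\beta) \leq n+1$, so by the first claim applied with $\gamma \defeq \Dmd\beta$, $\mathrm{K4}_{\Dmd\beta} \supseteq \mathrm{K4}_{\Dmd^2\sigma_n}$. By \cref{K4-subset-GL}, $\mathrm{GL}_{\Dmd\beta} \supseteq \mathrm{K4}_{\Dmd\beta}$, and we are done.

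For the second claim, suppose $n \geq 2$ and $d(\gamma) \leq n$. Then $d(\gamma) \leq (n-1) + 1$, so by \cref{cor:delta-n}, $\mathrm{wK4}_\gamma \supseteq \mathrm{wK4}_{\Dmd^2\sigma_{n-1}}$. Now the chain of inclusions in \cref{l:inclusions} gives $\mathrm{wK4}_{\Dmd^2\sigma_{n-1}} \supseteq \mathrm{K4}_{\Dmd^2\sigma_n}$ (this is precisely the diagonal inclusion established in the proof of \cref{l:inclusions}). Composing, $\mathrm{wK4}_\gamma \supseteq \mathrm{K4}_{\Dmd^2\sigma_n}$.

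There is no real obstacle here — the content is a short diagram chase through \cref{cor:delta-n}, \cref{K4-subset-GL}, and \cref{l:inclusions}. The only thing that requires slight attention is that for $\mathrm{wK4}_\gamma$ the shift from ``$d(\gamma) \leq n+1$'' (which \cref{cor:delta-n} gives for free) to ``$d(\gamma) \leq n$'' is exactly what lets us pass through the diagonal inclusion $\mathrm{wK4}_{\Dmd^2\sigma_{n-1}} \supseteq \mathrm{K4}_{\Dmd^2\sigma_n}$ to land in $\mathrm{K4}_{\Dmd^2\sigma_n}$ rather than merely in $\mathrm{wK4}_{\Dmd^2\sigma_n}$.
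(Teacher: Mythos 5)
Your proof is correct and follows essentially the same route as the paper, which simply cites \cref{cor:delta-n} and \cref{l:inclusions}: you instantiate \cref{cor:delta-n} directly for $\mathrm{K4}_\gamma$, shift the index to $n-1$ and compose with the diagonal inclusion $\mathrm{wK4}_{\Dmd^2\sigma_{n-1}} \supseteq \mathrm{K4}_{\Dmd^2\sigma_n}$ for $\mathrm{wK4}_\gamma$, and use \cref{K4-subset-GL} for $\mathrm{GL}_{\Dmd\beta}$ (which is exactly how the vertical inclusion $\mathrm{GL}_{\Dmd^2\sigma_n} \supseteq \mathrm{K4}_{\Dmd^2\sigma_n}$ in \cref{l:inclusions} is obtained anyway). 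Your side remarks on $d(\sigma_n)$ and on the vacuity of small $n$ are also accurate.
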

    \begin{proof}
      Follows from \cref{pr:delta-n,pr:inclusions}.
    \end{proof}
    \begin{proposition}
      \label{pr:delta-n-trans} The logic $\mathrm{K4}_{\Dmd^2\sigma_n}$ is $n$-transitive.
    \end{proposition}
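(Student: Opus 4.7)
The plan is to derive the $n$-transitivity axiom $\Dmd^{n+1} p \impl \bigvee_{k=1}^n \Dmd^k p$ from a well-chosen substitution instance of ${\rm A}4_{\Dmd^2\sigma_n}$. Set $q \defeq \bigvee_{k=0}^{n-1} \Dmd^k p$; substituting $q$ for $p$ in the axiom yields $\mathrm{K4}_{\Dmd^2\sigma_n} \vdash \Dmd^2 \sigma_n(q) \impl \Dmd q$, and in $\mathrm{K}$ the conclusion rewrites as $\Dmd q \equiv \bigvee_{k=1}^n \Dmd^k p$, which is exactly the right-hand side we want. So it suffices to prove $\mathrm{K} \vdash \Dmd^{n+1} p \impl \Dmd^2 \sigma_n(q)$.

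For this I would show by induction on $j$ that $\mathrm{K} \vdash \Dmd^j p \impl \sigma_{j+1}(q)$ for every $0 \leq j \leq n - 1$. The base case $j = 0$ reduces to $p \impl q$, which holds because $p$ is the first disjunct of $q$. For the inductive step (with $j + 1 \leq n - 1$), the inductive hypothesis and monotonicity give $\Dmd^{j+1} p \impl \Dmd \sigma_{j+1}(q)$; at the same time, since $j + 1 \leq n - 1$, $\Dmd^{j+1} p$ is itself a disjunct of $q$, so $\Dmd^{j+1} p \impl q$. Combining these two yields $\Dmd^{j+1} p \impl q \wedge \Dmd \sigma_{j+1}(q) = \sigma_{j+2}(q)$ by the recursive definition of $\sigma$. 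Instantiating at $j = n - 1$ and applying monotonicity twice lifts the resulting $\Dmd^{n-1} p \impl \sigma_n(q)$ to $\Dmd^{n+1} p \impl \Dmd^2 \sigma_n(q)$, and chaining with the substituted axiom finishes the derivation.

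The only creative ingredient is the choice of $q$: its single outer $\Dmd$ already packages the $n$-transitivity disjunction, while every $\Dmd^j p$ with $j < n$ still implies $q$, which is precisely what keeps the induction going. Once $q$ is fixed, the rest is routine monotonicity and propositional reasoning, so I do not anticipate any real obstacle.
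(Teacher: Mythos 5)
Your proposal is correct and follows essentially the same route as the paper: the paper's proof also takes $\varphi \defeq \bigvee_{k=0}^{n-1}\Dmd^k p$, asserts $\mathrm{K} \vdash \Dmd^{n+1}p \impl \Dmd^2\sigma_n(\varphi)$ (which you verify by the induction the paper leaves as "easy to check"), and concludes via the axiom and normality.
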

    \begin{proof}
      Consider the formula $\varphi \defeq \bigvee_{k=0}^{n-1} \Dmd^kp$. It is easy to check that
      \begin{gather*}
        \mathrm{K} \vdash \Dmd^{n+1}p \impl \Dmd^2\sigma_n(\varphi).
      \end{gather*}
      Therefore, $\mathrm{K4}_{\Dmd^2\sigma_n} \vdash \Dmd^{n+1}p \impl \Dmd\varphi$ and
      $\mathrm{K4}_{\Dmd^2\sigma_n} \vdash \Dmd^{n+1}p \impl \bigvee_{k=1}^n \Dmd^kp$ by normality.
    \end{proof}
    \begin{corollary}
      \label{cor:class-n-trans} The logics from \cref{cor:inclusions} are $n$-transitive.
    \end{corollary}
    \begin{lemma}
      \label{l:GL-trans-lob} For all $n \geq 1$,
      \begin{align*}
        \mathrm{K4}_{\Dmd^2\sigma_n} &\vdash \Dmd^{+n}\sigma_n(p) \impl \Dmd p,\\
        \mathrm{GL}_{\Dmd^2\sigma_n} &\vdash \Dmd p \impl \Dmd\bigl(p \wedge
        \neg\Dmd^{+n}\sigma_n(p)\bigr).
      \end{align*}
    \end{lemma}
    \begin{proof}
      One can easily check that, for all $k \geq 0$,
      \begin{gather}
        \mathrm{K} \vdash \Dmd^k \sigma_n(p) \impl \sigma_n\left(p \vee \bigvee_{l=1}^k \Dmd^l
        \sigma_n\right). \label{eq:tr-1}
      \end{gather}

      By induction on $k \geq 1$, we prove that
      \begin{gather}
        \mathrm{K4}_{\Dmd^2\sigma_n} \vdash \Dmd^k \sigma_n(p) \impl \Dmd p. \label{eq:tr-2}
      \end{gather}
      For $k = 1$, the formula is derivable in $\mathrm{K}$. Suppose that $k \geq 2$ and
      \cref{eq:tr-2} holds for $l = 1, \dots, k-1$. We have:
      \begin{align*}
        \mathrm{K4}_{\Dmd^2\sigma_n} \vdash \Dmd^k \sigma_n(p) &\impl \Dmd^2 \sigma_n\left(p \vee
        \bigvee_{l=1}^{k-2} \Dmd^l \sigma_n\right)
        &&\text{by }\cref{eq:tr-1}\\
        &\impl \Dmd\left(p \vee \bigvee_{l=1}^{k-2} \Dmd^l \sigma_n\right)
        &&\text{by }{\rm A}4_{\Dmd^2\sigma_n}\\
        &\impl \Dmd p \vee \bigvee_{l=2}^{k-1}\Dmd^l \sigma_n
        &&\text{by normality}\\
        &\impl \Dmd p
        &&\text{by the induction hypothesis}.
      \end{align*}

      The first statement of the lemma follows immediately from \cref{eq:tr-2}. Let us prove the
      second. From \cref{eq:tr-1}, it follows that
      \begin{gather}
        \mathrm{K} \vdash \Dmd^{+n} \sigma_n(p) \impl \Dmd\sigma_n(p \vee \Dmd^{+n}\sigma_n).
        \label{eq:tr-3}
      \end{gather}
      Thus,
      \begin{align*}
        \mathrm{GL}_{\Dmd^2\sigma_n} \vdash \Dmd p &\impl \Dmd(p \vee \Dmd^{+n}\sigma_n)\\
        &\impl \Dmd\bigl((p \vee \Dmd^{+n}\sigma_n) \wedge \neg\Dmd\sigma_n(p \vee
        \Dmd^{+n}\sigma_n)\bigr)
        &&\text{by }{\rm AL\ddot ob}_{\Dmd^2\sigma_n}\\
        &\impl \Dmd\bigl((p \vee \Dmd^{+n}\sigma_n) \wedge \neg\Dmd^{+n}\sigma_n\bigr)
        &&\text{by \cref{eq:tr-3}}\\
        &\impl \Dmd(p \wedge \neg\Dmd^{+n}\sigma_n).
      \end{align*}
    \end{proof}
    Transitivity of a relation $R$ can be defined either by local condition $R^2 \subseteq R$ or,
    equivalently, by the global one $R^+ \subseteq R$. Similar effects take place for the logics
    $\mathrm{K4}_{\Dmd^2\sigma_n}$: \cref{eq:4-cond-local} is the local condition, the global
    condition is stated in the following corollary:
    \begin{corollary}
      \label{cor:4-cond-global} A Kripke frame $\calF = (W, R)$ is a
      $\mathrm{K4}_{\Dmd^2\sigma_n}$-frame iff
      \begin{gather}
        \A{ w, v_1, \dots, v_n \in W} \left(w \mathrel R^+ v_1 \mathrel R \dots \mathrel R v_n
        \Rightarrow \bigvee_{i=1}^n w \mathrel R v_i\right). \label{eq:4-cond-global}
      \end{gather}
    \end{corollary}
    \begin{proof}
      Clearly, \cref{eq:4-cond-global} implies \cref{eq:4-cond-local}, therefore
      \cref{eq:4-cond-global} is a sufficient condition. Let us prove that it is necessary. Suppose
      that $\calF$ is a $\mathrm{K4}_{\Dmd^2\sigma_n}$-frame and
      $w \mathrel R^+ v_1 \mathrel R \dots \mathrel R v_n$. Consider the valuation
      $\vartheta(p) \defeq \{v_1, \dots, v_n\}$. Then $(\calF, \vartheta), v_1 \vDash \sigma_n$ and,
      since $\calF$ is $n$-transitive, $(\calF, \vartheta), w \vDash \Dmd^{+n}\sigma_n$.
      By~\cref{l:GL-trans-lob}, $(\calF, \vartheta), w \vDash \Dmd p$. Thus, $w \mathrel R v_i$ for
      some $i \in \{1, \dots, n\}$
    \end{proof}
    Notice that, the equality $\mathrm{K4}_{\Dmd\beta} = \mathrm{K} + \Dmd^{+n}\beta \impl \Dmd p$
    does not hold in general. For example, $\mathrm{K4}_{\Dmd^3p} \vdash \Dmd^k p \impl \Dmd p$ iff
    $k$ is odd.

  \subsection{Subframe- and semisubframe-hereditary logics}
    \begin{definition}
      A weak subframe $\calF = (W, R)$ of the frame $\calF_0 = (W_0, R_0)$ is a \defw{semisubframe}
      if the following condition holds:
      \begin{gather}
        \forall w, v \in W \,(w \mathrel R^* v \wedge w \mathrel R_0 v \Rightarrow w \mathrel R v).
        \label{eq:semisubframe}
      \end{gather}
      A model $(\calF, \vartheta)$ is a \defw{semisubmodel} of $(\calF_0, \vartheta_0)$, if $\calF$
      is a semisubframe of $\calF_0$ and $\vartheta = \vartheta_0|_W$.
    \end{definition}
    In contrast to generated subframes, the class of frames of an arbitrary logic is not neccessary
    closed under taking subframes and semisubframes. We say that a Kripke-complete logic $\Lambda$
    is \defw{(semi)subframe-hereditary}, if the class $\mathrm{Frames}(\Lambda)$ is closed under
    taking (semi)subframes. A formula $\varphi$ is called \emph{(semi)subframe-hereditary} if
    $\mathrm{K} + \varphi$ is (semi)subframe-hereditary. Notice that, if $\varphi$ is canonical,
    then $\mathrm{K} + \varphi$ is Kripke-complete and this definition coincides with the definition
    from the introduction. It is obvious that any subframe is a semisubframe, therefore all
    semisubframe-hereditary logics are subframe-hereditary.

    Some comments are needed here. Firstly, subframe-hereditary logics are more often called simply
    \emph{subframe} (from the classical work of Fine~\cite{Fine85} to modern
    papers~\cite{BGJ11,KudShap25}). However, the term \emph{subframe formula} is commonly used for
    specific formulas defined by Fine. In this paper, we use more precise (though longer) term
    \emph{subframe-hereditary} for both formulas and logics. Secondly, the notion of a
    subframe-hereditary logic can be extended to Kripke-incomplete logics using general
    frames~\cite[Section 2.2]{Wolt93}.

    Let us give some examples of subframe- and semisubframe-hereditary formulas. It is easy to see
    that every semisubframe of a transitive (reflexive) frame is transitive (reflexive), whence the
    formulas $\Dmd^2p \impl \Dmd p$ and $p \impl \Dmd p$ are semisubframe-hereditary. Also, every
    subframe (but not every semisubframe) of a symmetric frame is symmetric. Therefore,
    $p \impl \Box\Dmd p$ is a subframe-, but not semisubframe-hereditary formula. Finally, a
    subframe of a serial frame can easily be not serial, so $\Dmd\top$ is not a subframe-hereditary
    formula.
    \begin{proposition}
      \label{pr:t-inherit} The logics $\mathrm{K4}_\gamma$, $\mathrm{wK4}_\gamma$, and
      $\mathrm{GL}_{\Dmd\beta}$ are semisubframe-hereditary.
    \end{proposition}
    \begin{proof}
      Suppose that there is a semisubframe $\calF = (W, R)$ of the frame $\calF_0 = (W_0, R_0)$ such
      that $\calF \nvDash \mathrm{K4}_\gamma$. Then there is a valuation $\vartheta$ on $\calF$ and
      a world $w \in W$ such that $(\calF, \vartheta), w \nvDash \gamma \impl \Dmd p$. By
      \cref{l:modalized}, we can assume that $\vartheta(p) \subseteq R^*(w)$.

      Let $\vartheta_0$ be the valuation on $\calF_0$ such that $\vartheta_0(p) = \vartheta(p)$. By
      induction on construction of $\alpha \in \mathrm{Fm}^+(p)$, we will show that
      $\vartheta_0(\alpha) \supseteq \vartheta(\alpha)$. The base case is trivial. Suppose that
      $\alpha \equiv \Dmd\alpha_1$. By the induction hypothesis,
      $\vartheta_0(\alpha_1) \supseteq \vartheta(\alpha_1)$. Since $W_0 \supseteq W$ and
      $R_0 \supseteq R$,
      \begin{align*}
        \vartheta_0(\Dmd\alpha_1) &= \{ v \in W_0 \mid R_0(v) \cap \vartheta_0(\alpha_1) \neq
        \emptyset \}\\
        &\supseteq \{ v \in W \mid R(v) \cap \vartheta(\alpha_1) \neq \emptyset \} =
        \vartheta(\Dmd\alpha_1).
      \end{align*}
      The induction step for $\alpha \equiv \alpha_1 \wedge \alpha_2$ is trivial.

      Therefore, $(\calF_0, \vartheta_0), w \vDash \gamma$. Moreover,
      \begin{align*}
        R_0(w) \cap \vartheta_0(p) &= R_0(w) \cap R^*(w) \cap \vartheta(p) &&\text{since
        $\vartheta_0(p) = \vartheta(p) \subseteq R^*(w)$}\\
        &= R(w) \cap \vartheta(p) &&\text{by \cref{eq:semisubframe}}.
      \end{align*}
      Thus, $(\calF_0, \vartheta_0), w \nvDash \Dmd p$ and $\calF_0 \nvDash \mathrm{K4}_\gamma$.

      So, $\mathrm{K4}_\gamma$ is semisubframe-hereditary. Notice that, if
      $w \notin \vartheta({\rm Aw}4_\gamma)$, then $w \notin \vartheta(p) = \vartheta_0(p)$, whence
      $w \notin \vartheta_0({\rm Aw}4_\gamma)$, therefore $\mathrm{wK4}_\gamma$ is also
      semisubframe-hereditary. One can easily show that the class of all conversely well-founded
      frames is closed under taking semisubframes, whence, by \cref{pr:GL-frames},
      $\mathrm{GL}_{\Dmd\beta}$ is semisubframe-hereditary.
    \end{proof}
    Formulas ${\rm A}4_\gamma$ and ${\rm Aw}4_\gamma$ are Sahlqvist, therefore $\mathrm{K4}_\gamma$
    and $\mathrm{wK4}_\gamma$ are canonical. In contrast, $\mathrm{GL}_{\Dmd\beta}$ is not
    canonical. Indeed, by \cref{pr:con} $\mathrm{GL}_{\Dmd\beta} \subseteq \mathrm{GL}$, whence
    $\calF_\mathrm{GL}$ is a generated subframe of $\calF_{\mathrm{GL}_{\Dmd\beta}}$ by
    \cref{l:canon-subframe}. It is known that $\calF_\mathrm{GL}$ contains reflexive worlds
    \cite[Theorem 6.5]{ChZa97}, therefore $\calF_{\mathrm{GL}_{\Dmd\beta}}$ also contains them and
    $\calF_{\mathrm{GL}_{\Dmd\beta}} \nvDash \mathrm{GL}_{\Dmd\beta}$ by~\cref{pr:GL-frames}.
    \begin{corollary}
      \label{cor:canon-subframes} $\mathrm{K4}_\gamma$ and $\mathrm{wK4}_\gamma$ are valid in all
      semisubframes of their canonical frames. $\mathrm{GL}_{\Dmd\beta}$ is valid in all conversely
      well-founded semisubframes of its canonical frame.
    \end{corollary}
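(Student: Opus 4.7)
The strategy is to leverage \cref{t-inherit} (semisubframe-heredity) together with canonicity arguments; for $\mathrm{GL}_{\Dmd\beta}$, which is non-canonical as just noted in the text preceding the corollary, I would additionally route through the frame characterization in \cref{GL-frames}. For $\mathrm{K4}_\gamma$ and $\mathrm{wK4}_\gamma$ the argument is immediate: the axioms ${\rm A}4_\gamma$ and ${\rm Aw}4_\gamma$ are Sahlqvist (strictly positive antecedent, positive or disjunctive consequent), hence canonical, so $\calF_{\mathrm{K4}_\gamma} \vDash \mathrm{K4}_\gamma$ and $\calF_{\mathrm{wK4}_\gamma} \vDash \mathrm{wK4}_\gamma$; applying the semisubframe-hereditary property of \cref{t-inherit} to these canonical frames yields the first claim.

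For $\mathrm{GL}_{\Dmd\beta}$, I would use a two-step bridge through the canonical auxiliary logic $\mathrm{wK4}_{\Dmd\beta}$. By \cref{l:con} and \cref{K4-subset-GL} we have $\mathrm{wK4}_{\Dmd\beta} \subseteq \mathrm{K4}_{\Dmd\beta} \subseteq \mathrm{GL}_{\Dmd\beta}$, and since $\mathrm{wK4}_{\Dmd\beta}$ is canonical, \cref{c:canon-subframe} yields $\calF_{\mathrm{GL}_{\Dmd\beta}} \vDash \mathrm{wK4}_{\Dmd\beta}$. Invoking semisubframe-heredity of $\mathrm{wK4}_{\Dmd\beta}$ from \cref{t-inherit}, any semisubframe of $\calF_{\mathrm{GL}_{\Dmd\beta}}$ is then a $\mathrm{wK4}_{\Dmd\beta}$-frame. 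If such a semisubframe is additionally conversely well-founded, \cref{GL-frames} upgrades it to a $\mathrm{GL}_{\Dmd\beta}$-frame, finishing the second claim.

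The only genuine obstacle is the failure of canonicity for $\mathrm{GL}_{\Dmd\beta}$; it is circumvented precisely by passing to the canonical weaker logic $\mathrm{wK4}_{\Dmd\beta}$ and then using \cref{GL-frames} to recover the L\"ob axiom from converse well-foundedness. All other steps are direct invocations of results already established in the excerpt.
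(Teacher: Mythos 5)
Your proposal is correct and follows essentially the same route as the paper: canonicity plus \cref{t-inherit} for the first claim, and for $\mathrm{GL}_{\Dmd\beta}$ a bridge through a canonical sublogic via \cref{c:canon-subframe}, \cref{t-inherit}, and \cref{GL-frames}. The only (immaterial) difference is that the paper bridges through $\mathrm{K4}_{\Dmd\beta}$ where you use $\mathrm{wK4}_{\Dmd\beta}$; both suffice since \cref{GL-frames} only needs $\calF \vDash \mathrm{wK4}_{\Dmd\beta}$.
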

    \begin{proof}
      The first claim follows from canonicity and \cref{pr:t-inherit}. For the second, let $\calF$
      be a conversely well-founded semisubframe of $\calF_{\mathrm{GL}_{\Dmd\beta}}$. By
      \cref{pr:K4-subset-GL,c:canon-subframe},
      $\calF_{\mathrm{GL}_{\Dmd\beta}} \vDash \mathrm{K4}_{\Dmd\beta}$, therefore
      $\calF \vDash \mathrm{K4}_{\Dmd\beta}$ by~\cref{pr:t-inherit} and
      $\calF \vDash \mathrm{GL}_{\Dmd\beta}$ by~\cref{pr:GL-frames}.
    \end{proof}
  \subsection{The main results}
    \label{ss:main} Now we are ready to state the main results of the work.
    \begin{theorem}
      \label{t:K4-fmp} Suppose that a logic $\Lambda$ is valid in finite subframes of its canonical
      frame and contains $\mathrm{K4}_{\Dmd^2\sigma_n}$ for some $n \in \omega$. Then it has the
      finite model property.
    \end{theorem}
    \begin{theorem}
      \label{t:GL-fmp} Suppose that $\Lambda$ is valid in finite conversely well-founded
      semisubframes of its canonical frame and contains $\mathrm{GL}_{\Dmd^2\sigma_n}$ for some
      $n \in \omega$. Then it has the finite model property.
    \end{theorem}
    By \cref{cor:inclusions,cor:canon-subframes}, conditions of the theorems hold for
    $\mathrm{K4}_\gamma$, $\mathrm{wK4}_\gamma$, and $\mathrm{GL}_{\Dmd\beta}$.
    \begin{corollary}
      \label{cor:fmp} The logics $\mathrm{K4}_\gamma$, $\mathrm{wK4}_\gamma$, and
      $\mathrm{GL}_{\Dmd\beta}$ have the finite model property.
    \end{corollary}
    Unfortunately, it is difficult to provide other natural examples of non-transitive modal logics
    for which these theorems can be applied. It was noted above that $p \impl \Dmd p$ and
    $p \impl \Box\Dmd p$ are subframe-hereditary. However, it is easy to see that all reflexive
    $\mathrm{K4}_{\Dmd^2\sigma_n}$-frames are transitive, whence
    $\mathrm{K4}_\gamma + p \impl \Dmd p = \mathrm{K4} + p \impl \Dmd p$ by~\cref{cor:inclusions}.
    Similar trivialization occur for symmetry, linearity, and other standard subframe-hereditary
    axioms of modal logics. In the context of pretransitive logics, it is more natural to consider
    their version for the transitive closure: $p \impl \Dmd^{+n} p$, $p \impl \Box^{+n}\Dmd^{+n} p$
    (logics with the second axiom were considered in~\cite{KudShap11}). However, it is easy to see
    that these formulas are not subframe-hereditary.

    Anyway, we can provide an example of modal logic other than $\mathrm{K4}_\gamma$ and
    $\mathrm{wK4}_\gamma$ for which \cref{t:K4-fmp} can be applied. Let us fix some variable $s$ and
    introduce a new modality $\Dmd_s\varphi \defeq \Dmd(s \wedge \varphi)$. It is easy to see that
    the formula
    \begin{gather*}
      \mathrm{Trans}_n^s \defeq s \impl \left(\Dmd_s^{n+1}p \impl \bigvee_{k=1}^n \Dmd_s^kp\right)
    \end{gather*}
    is valid in a frame $\calF$ iff $\calF$ and all its subframes are n-transitive. Therefore,
    $\mathrm{Trans}_n^s$ is clearly subframe-hereditary. It is also Sahlqvist, whence canonical.
    Thus, the logics $\mathrm{K4}_\gamma + \mathrm{Trans}_n^s$ have the finite model property
    by~\cref{t:K4-fmp}. One can easily check that, if $n < d(\gamma) - 1$, then $\mathrm{K4}_\gamma$
    is not $n$-transitive and $\mathrm{K4}_\gamma + \mathrm{Trans}_n^s$ is a nontrivial (though not
    very natural) extension of $\mathrm{K4}_\gamma$. The logic $\mathrm{K} + \mathrm{Trans}_n^s$ is
    much more natural, but it does not contain $\mathrm{K4}_{\Dmd^2\sigma_m}$ for any $m$,
    so~\cref{t:K4-fmp} is not applicable to it.
 \section{Paths in frames and models}
  \label{s:grid} In this section we prove auxiliary combinatorial results about
  $\mathrm{K4}_{\Dmd^2\sigma_n}$-frames.
  \subsection{Paths in frames}
    \begin{definition}
      Let $\calF = (W, R)$ be a Kripke frame. A finite sequence of worlds $u_0, \dots, u_m$ is a
      \defw{path} in $\calF$, if $u_k \mathrel R u_{k+1}$ for all $k < m$. The number $m \in \omega$
      is called the \defw{length} of the path.
    \end{definition}
    Let us fix some $n \in \omega$ and put $N \defeq n^n$.
    \begin{proposition}
      \label{pr:connected-paths} Let $\calF$ be a $\mathrm{K4}_{\Dmd^2\sigma_n}$-frame. Suppose that
      $u^i_0, \dots, u^i_n$, $i = 0, \dots, N$ are paths of length $n$ in $\calF$ such that
      $u^i_n \mathrel R^* u^{i+1}_0$ for all $i < N$. Then there are $i, i' \leq N$ and $j < n$ such
      that $i < i'$ and $u^i_j \mathrel R u^{i'}_{j+1}$.
    \end{proposition}
    \begin{proof}
      Suppose that $u^i_j \mathrel R u^{i'}_{j+1}$ does not hold for all $i < i' \leq N$ and
      $j < n$. We say that a pair of sequences of natural numbers $(i_1, \dots, i_l)$,
      $(j_2, \dots, j_l)$ is an \defw{$l \times m$-zigzag}, if $1 \leq i_1 < \dots < i_l \leq N$,
      $1 \leq j_2, \dots, j_l \leq m$, and $u^{i_k}_{m} \mathrel R u^{i_{k+1}}_{j_{k+1}}$ for all
      $k = 1, \dots, l-1$. The number $i_1$ is called the \defw{initial line} of the zigzag. Notice
      that, for each zigzag, there is the corresponding path in $\calF$:
      \begin{center}
        \begin{tikzcd}[column sep = small,row sep = .35em]
          u^{i_1}_1 \arrow[r]& u^{i_1}_2 \arrow[r]& \dots \arrow[r]& u^{i_1}_m \arrow[lld] \\
          & u^{i_2}_{j_2} \arrow[r]& \dots \arrow[r]& u^{i_2}_m \arrow[lld] \\ &
          \phantom{u^{i_3}_{j_3}} & \dots & \phantom{u^{i_3}_m} \arrow[lld]\\ & u^{i_l}_{j_l}
          \arrow[r]& \dots \arrow[r]& u^{i_l}_m.
        \end{tikzcd}
      \end{center}

      Let us note that, for $i = 1, \dots, N-1$,
      \begin{gather*}
        u^i_m \mathrel R^+ u^{i+1}_1 \mathrel R u^{i+1}_2 \mathrel R \dots \mathrel R u^{i+1}_n.
      \end{gather*}
      Therefore, $u^i_m \mathrel R u^{i+1}_{j_{i+1}}$ for some $j_{i+1} \in \{1, \dots, n\}$ by
      \cref{cor:4-cond-global}, whence $(1, 2, \dots, N)$, $(j_2, \dots, j_N)$ is an
      $n^n \times n$-zigzag.

      Suppose that there is an $n \times 1$-zigzag $(i_1, \dots, i_n)$, $(j_2, \dots, j_n)$. Then
      $j_2 = \dots = j_n = 1$ and
      \begin{gather*}
        u^0_0 \mathrel R^+ u^{i_1}_1 \mathrel R u^{i_2}_1 \mathrel R \dots \mathrel R u^{i_n}_1.
      \end{gather*}
      By \cref{cor:4-cond-global}, $u^0_0 \mathrel R u^{i_a}_1$ for some $a \in \{1, \dots, n\}$
      contradicting our assumption.

      So, we know that $n^n \times n$-zigzag exists and $n \times 1$-zigzag does not exist.
      \begin{lemma}
        \label{l:zigzag} If there is a $kn \times m$-zigzag with initial line $i_1$, then there is a
        $k \times (m-1)$-zigzag with the same initial line.
      \end{lemma}
      \begin{proof}
        The proof is by induction on $k$. The base case $k = 1$ is trivial. Suppose that $k > 1$ and
        the statement holds for $k - 1$. Let $(i_1, \dots, i_{kn})$, $(j_2, \dots, j_{kn})$ be a
        $kn \times m$-zigzag.

        Consider the path corresponding to the zigzag, more specifically, its fragment located on
        lines $i_2, \dots, i_{n+1}$:
        \begin{center}
          \newcommand{\ph}{\phantom{{}_{{}_{n+{}}} }}
          \begin{tikzcd}[column sep = small,row sep = .35em]
            \ph u^{i_2}_{j_2} \arrow[r]& \dots \arrow[r]& u^{i_2}_m \ph \arrow[lld] \\ \ph
            u^{i_3}_{j_3} \arrow[r]& \dots \arrow[r]& u^{i_3}_m \ph \arrow[lld] \\
            \phantom{u^{i_{n+1}}_{j_{n+1}}} & \dots & \phantom{u^{i_{n+1}}_m} \arrow[lld]\\
            u^{i_{n+1}}_{j_{n+1}} \arrow[r]& \dots \arrow[r]& u^{i_{n+1}}_m.
          \end{tikzcd}
        \end{center}
        There are at least $n$ worlds in this fragment (at least one in each line) and
        $u^{i_1}_{m-1} \mathrel R^+ u^{i_2}_{j_2}$, therefore, by \cref{cor:4-cond-global},
        $u^{i_1}_{m-1} \mathrel R u^{i_a}_j$ for some $a \in \{2, \dots, n+1\}$,
        $j \in \{j_a, \dots, m\}$. Moreover, by assumption, $j$ can not be equal to $m$.

        Consider the part of the zigzag starting from the $a$-th line:
        \begin{gather*}
          (i_a, i_{a+1}, \dots, i_{kn}),\; (j_{a+1}, \dots, j_{kn})
        \end{gather*}
        It is obviously an $l \times m$-zigzag, where $l = kn - a + 1 \geq (k - 1)n$. By the
        induction hypothesis, there is an $(k - 1) \times (m - 1)$-zigzag
        $(i'_1, i'_2, \dots, i'_{k-1})$, $(j'_2, \dots, j'_{k-1})$ such that $i'_1 = i_a$.

        It remains to notice that
        \begin{gather*}
          (i_1, i_a, i'_2, \dots, i'_{k-1}),\; (j, j'_2, \dots, j'_{k-1})
        \end{gather*}
        is an $k \times (m - 1)$-zigzag.
      \end{proof}
      Applying $n-1$ times \cref{l:zigzag} to the $n^n \times n$-zigzag, we obtain an
      $n \times 1$-zigzag. Contradiction.
    \end{proof}
  \subsection{Labeled paths in models}
    \begin{definition}
      Let $\calM = (W, R, \vartheta)$ be a Kripke frame, $\Psi$ be a finite set of formulas.
      \emph{$\Psi$-labeled path} in $\calM$ is a finite sequence of worlds and formulas
      \begin{gather*}
        u_0, \psi_0, u_1, \psi_1, \dots, \psi_{m-1}, u_m
      \end{gather*}
      in which $m \geq 0$, $u_k \in W$, $\psi_k \in \Psi$ and
      $u_{k+1} \in R(u_k) \cap \vartheta(\psi_k)$ for all $k < m$. The number $m$ is called the
      \defw{length} of this path.
    \end{definition}

    Let us fix some $\Psi$ and put $M \defeq N|\Psi|^n = n^n|\Psi|^n$.
    \begin{lemma}
      \label{l:grid-in-frame} Let $\calM = (W, R, \vartheta)$ be a model on a
      $\mathrm{K4}_{\Dmd^2\sigma_n}$-frame. Suppose that $u^i_0, \psi^i_0, \dots, u^i_n$ for
      $i = 0, \dots, M$ are $\Psi$-labeled paths of length $n$ in $\calM$ such that
      $u^i_n \mathrel R^* u^{i+1}_0$ for all $i < M$. Then there are $i, i' \leq M$ and $j < n$ such
      that $i < i'$ and
      \begin{gather}
        \label{eq:grid-in-frame}
        u^{i'}_{j+1} \in R(u^i_j) \cap \vartheta(\psi^i_j).
      \end{gather}
    \end{lemma}
    \begin{proof}
      Consider the vectors of formulas
      \begin{gather*}
        \vec{\psi\,}_i = (\psi^i_0, \dots, \psi^i_{n-1}),\quad i = 0, \dots, M.
      \end{gather*}
      There are only $|\Psi|^n$ distinct vectors of formulas from $\Psi$ of length $n$. Therefore,
      by the pigeonhole principle, there are $n^n + 1$ identical vectors among $\vec{\psi\,}_i$,
      $i = 0, \dots, M$, that is, $\vec{\psi\,}_{i_0} = \dots = \vec{\psi\,}_{i_N}$ for some
      $0 \leq i_0 < \dots < i_N \leq M$. Consider the paths
      \begin{gather*}
        u^{i_k}_0, \dots, u^{i_k}_n,\quad k = 0, \dots, N.
      \end{gather*}
      By~\cref{pr:connected-paths}, there are $k, k' \leq N$ and $j < n$ such that $k < k'$ and
      $u^{i_k}_j \mathrel R u^{i_{k'}}_{j+1}$. Since
      $u^{i_{k'}}_0, \psi^{i_{k'}}_0, \dots, u^{i_{k'}}_n$ is a labeled path and
      $\vec{\psi\,}_{i_{k'}} = \vec{\psi\,}_{i_k}$,
      $u^{i_{k'}}_{j+1} \in \vartheta(\psi^{i_{k'}}_j) = \vartheta(\psi^{i_k}_j)$. Thus,
      \cref{eq:grid-in-frame} holds for $i = i_k$ and $i' = i_{k'}$.
    \end{proof}
    \begin{definition}
      A $\Psi$-labeled path $u_0, \psi_0, \dots, u_m$ in $\calM = (W, R, \vartheta)$ is
      \emph{optimal}, if, for all $k < m$, $u_{k+1} \in \mathrm{max}(R(u_k) \cap \vartheta(\psi_k)$.
    \end{definition}
    \begin{corollary}
      \label{cor:seq-out} Let $\calM = (W, R, \vartheta)$ be a model on a
      $\mathrm{K4}_{\Dmd^2\sigma_n}$-frame, $l$ be a number greater than or equal to $n$. Suppose
      that $u_0, \psi_0, \dots, u_{lM + n}$ is an optimal $\Psi$-labeled path of length $lM + n$ in
      $\calM$. Then there is $k \leq l(M - 1) + n$ such that $u_{k+l} \mathrel R^+ u_k$.
    \end{corollary}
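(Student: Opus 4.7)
The plan is to invoke \cref{grid-in-frame} on an appropriate family of length-$n$ subpaths extracted from the given optimal path. Specifically, for each $i = 0, 1, \dots, M$, consider the length-$n$ $\Psi$-labeled subpath starting at position $il$: the worlds $u_{il}, \dots, u_{il+n}$ together with the labels $\psi_{il}, \dots, \psi_{il+n-1}$. Since $i \leq M$ and $l \geq n$, the right endpoint $il + n$ stays within the given path of length $lM + n$. Between the end of the $i$-th subpath (position $il+n$) and the start of the $(i+1)$-th (position $(i+1)l$), the original path provides a subpath of length $l - n \geq 0$, so $u_{il+n} \mathrel R^* u_{(i+1)l}$. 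This is exactly the hypothesis of \cref{grid-in-frame}.

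Applying the lemma yields indices $0 \leq i < i' \leq M$ and $j < n$ such that $u_{i'l+j+1} \in R(u_{il+j}) \cap \vartheta(\psi_{il+j})$. Set $k \defeq il + j + 1$, so $k \leq (M-1)l + n$ and $k + l \leq Ml + n$, putting $u_{k+l}$ within the path. By optimality of the original path, $u_k = u_{il+j+1}$ is maximal in the set $X \defeq R(u_{il+j}) \cap \vartheta(\psi_{il+j})$, and the lemma tells us $u_{i'l+j+1} \in X$. Along the original path we have $u_k \mathrel R^{(i'-i)l} u_{i'l+j+1}$ with $(i'-i)l \geq l > 0$, so $u_k \mathrel R^* u_{i'l+j+1}$; maximality then forces $u_{i'l+j+1} \mathrel R^* u_k$.

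To finish, note that $u_{k+l} = u_{(i+1)l+j+1}$ and $(i+1)l+j+1 \leq i'l + j + 1$ since $i + 1 \leq i'$. The original path therefore gives $u_{k+l} \mathrel R^* u_{i'l+j+1}$, which combined with $u_{i'l+j+1} \mathrel R^* u_k$ yields $u_{k+l} \mathrel R^* u_k$. Since the original path provides $u_k \mathrel R^l u_{k+l}$ with $l \geq n \geq 1$, either $u_{k+l} \neq u_k$ and then $R^*$ strengthens directly to $R^+$, or $u_{k+l} = u_k$ and then the length-$l$ arc gives a cycle $u_k \mathrel R^+ u_k = u_{k+l} \mathrel R^+ u_k$. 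Either way $u_{k+l} \mathrel R^+ u_k$, as required.

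The argument is essentially routine once \cref{grid-in-frame} is in hand; the only care needed is the indexing (verifying that $k \leq l(M-1) + n$ and that $k + l$ is still within the path) and the slightly delicate last step, where the conclusion $R^+$ (rather than merely $R^*$) must be extracted by noting that a putative equality $u_k = u_{k+l}$ already produces the desired cycle through the length-$l$ arc of the original path.
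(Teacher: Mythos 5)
Your proposal is correct and follows essentially the same route as the paper: decompose the path into the length-$n$ subpaths starting at positions $il$, apply \cref{grid-in-frame}, and use optimality (maximality of $u_{il+j+1}$ in $R(u_{il+j}) \cap \vartheta(\psi_{il+j})$) to turn the forward edge $u_{i'l+j+1} \in R(u_{il+j}) \cap \vartheta(\psi_{il+j})$ into the backward relation $u_{k+l} \mathrel R^+ u_k$. Your final step separating the cases $u_{k+l} \neq u_k$ and $u_{k+l} = u_k$ is in fact slightly more careful about the $R^*$ versus $R^+$ distinction than the paper's own one-line conclusion.
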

    \begin{proof}
      Let $u^i_j \defeq u_{li + j}$ for $i \leq M$, $j \leq n$, $\psi^i_j \defeq \psi_{li + j}$ for
      $i \leq M$, $j < n$. Clearly, $u^i_0, \psi^i_0, \dots, u^i_n$, $i \leq M$, are $\Psi$-labeled
      paths in $\calM$. Also, since $n \leq l$, $li + n \leq l(i + 1)$, therefore
      $u^i_n \mathrel R^* u^{i+1}_0$ for all $i < M$. By~\cref{l:grid-in-frame}, there are
      $i, i' \leq M$, $j < n$ such that $i < i'$ and
      $u^{i'}_{j+1} \in R(u^i_j) \cap \vartheta(\psi^i_j)$. Since
      $u^i_{j+1} \in \mathrm{max}(R(u^i_j) \cap \vartheta(\psi^i_j))$ and
      $u^i_{j+1} \mathrel R^+ u^{i'}_{j+1}$, $u^{i'}_{j+1} \mathrel R^+ u^i_{j+1}$. Thus,
      $u^{i+1}_{j+1} \mathrel R^+ u^i_{j+1}$, that is, $u_{k+l} \mathrel R^+ u_k$ for
      $k \defeq li + j + 1$.
    \end{proof}
    \begin{definition}
      A $\Psi$-labeled path $u_0, \psi_0, \dots, u_m$ in $\calM = (W, R, \vartheta)$ is called
      \emph{reducible}, if there are $k, k' < m$ such that $k' \leq k$ and
      $u_{k'} \in R(u_k) \cap \vartheta(\psi_k)$.
    \end{definition}
    \begin{corollary}
      \label{cor:seq-in} Let $\calM = (W, R, \vartheta)$ be a model on a
      $\mathrm{K4}_{\Dmd^2\sigma_n}$-frame. Then all $\Psi$-labeled paths of length $n(M + 1)$ in
      $\calM$ in which all worlds belong to the same cluster are reducible.
    \end{corollary}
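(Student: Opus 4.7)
The plan is to reduce to Lemma~\ref{grid-in-frame} by slicing the long path into $M+1$ blocks of length $n$, but indexing the blocks in \emph{reverse} order along the original path. Concretely, given a $\Psi$-labeled path $u_0,\psi_0,u_1,\ldots,u_{n(M+1)}$ with all worlds in a single cluster, define
\begin{gather*}
  u^i_j \defeq u_{n(M-i)+j},\quad \psi^i_j \defeq \psi_{n(M-i)+j}\quad \text{for } 0 \le i \le M,\ 0 \le j \le n.
\end{gather*}
Each $u^i_0,\psi^i_0,\ldots,u^i_n$ is a genuine $\Psi$-labeled path of length $n$ (it is just a consecutive block of the original path), so the first hypothesis of Lemma~\ref{grid-in-frame} is clear.

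The second hypothesis $u^i_n \mathrel R^* u^{i+1}_0$ is where the cluster assumption is used. After re-indexing, $u^i_n = u_{n(M-i+1)}$ lies \emph{later} in the original path than $u^{i+1}_0 = u_{n(M-i-1)}$, so the obvious forward subpath cannot supply an $R^*$-step; however, since every world of the path belongs to the same cluster, we automatically have $u_b \mathrel R^* u_a$ for all indices $a,b$, and in particular $u^i_n \mathrel R^* u^{i+1}_0$ for all $i < M$. This is the whole point of choosing the reversed indexing: the cluster condition is strong enough that we are free to traverse the blocks in either direction when applying Lemma~\ref{grid-in-frame}.

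Now apply Lemma~\ref{grid-in-frame} to these $M+1$ blocks to obtain $0 \le i < i' \le M$ and $0 \le j < n$ with $u^{i'}_{j+1} \in R(u^i_j) \cap \vartheta(\psi^i_j)$. Translating back, set $k \defeq n(M-i)+j$ and $k' \defeq n(M-i')+j+1$. Then $u_{k'} \in R(u_k) \cap \vartheta(\psi_k)$, and the crucial inequality
\begin{gather*}
  k - k' = n(i' - i) - 1 \geq n - 1 \geq 0
\end{gather*}
(using $i' - i \ge 1$ and $n \ge 1$) gives $k' \le k$. A quick check shows $k, k' < n(M+1)$, so this witnesses reducibility.

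The only subtle step is the second paragraph — recognizing that the conclusion of Lemma~\ref{grid-in-frame} always produces a ``forward'' edge in the block ordering, so to obtain a \emph{backward} edge in the ambient path one must feed the lemma the blocks in reverse. Apart from this observation, everything is a routine reindexing plus the fact that $R^*$ is total inside a cluster, which is what makes the reversal legal.
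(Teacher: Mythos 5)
Your proof is correct and is essentially identical to the paper's own argument: the same reverse-indexed decomposition $u^i_j = u_{n(M-i)+j}$ into $M+1$ blocks of length $n$, the same use of the cluster hypothesis to supply $u^i_n \mathrel R^* u^{i+1}_0$, the same application of \cref{grid-in-frame}, and the same computation $k - k' = n(i'-i) - 1 \geq 0$. The extra commentary on why the blocks must be fed in reverse order is a helpful gloss but does not change the argument.
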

    \begin{proof}
      Suppose that $u_0, \psi_0, \dots, u_{n(M + 1)}$ is a path in $\calM$ in which all worlds
      belong to the same cluster. Let $u^i_j \defeq u_{n(M-i)+j}$ for $i \leq M$, $j \leq n$,
      $\psi^i_j \defeq \psi_{n(M-i)+j}$ for $i \leq M$, $j < n$. Obviously,
      $u^i_0, \psi^i_0, \dots, u^i_n$, $i \leq M$, are $\Psi$-labeled paths in $\calM$. Since all
      worlds are in one cluster, $u^i_n \mathrel R^* u^{i+1}_0$ for all $i < M$.
      By~\cref{l:grid-in-frame}, there are $i, i' \leq M$, $j < n$ such that $i < i'$ and
      $u^{i'}_{j+1} \in R(u^i_j) \cap \vartheta(\psi^i_j)$. Let $k \defeq n(M - i) + j$,
      $k' \defeq n(M - i') + j + 1$. Then $k < nM + n$, $u_{k'} \in R(u_k) \cap \vartheta(\psi_k)$,
      and $k - k' = n(i' - i) - 1 \geq n - 1 \geq 0$, whence $k' \leq k$.
    \end{proof}
    \begin{corollary}
      \label{cor:K4} Let $\calM = (W, R, \vartheta)$ be a model on a
      $\mathrm{K4}_{\Dmd^2\sigma_n}$-frame. Then every optimal $\Psi$-labeled path of length
      $C \defeq n(M^2 + M + 1)$ in $\calM$ is reducible.
    \end{corollary}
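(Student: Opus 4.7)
The plan is to combine \cref{cor:seq-out} and \cref{cor:seq-in}, using the first to locate a long stretch of the path that must lie inside a single cluster, and the second to reduce that stretch. The key arithmetic observation is that $C = n(M^2 + M + 1)$ decomposes as $lM + n$ with $l = n(M+1)$, so both corollaries apply with matching parameters.

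Concretely, I would set $l \defeq n(M+1)$, which satisfies $l \geq n$ and $lM + n = n(M+1)M + n = n(M^2 + M + 1) = C$. Given an optimal $\Psi$-labeled path $u_0, \psi_0, \dots, u_C$, I would invoke \cref{cor:seq-out} to obtain an index $k \leq l(M-1) + n$ with $u_{k+l} \mathrel R^+ u_k$. In particular, $k + l \leq lM + n = C$, so the sub-path $u_k, \psi_k, u_{k+1}, \dots, u_{k+l}$ is a well-defined optimal $\Psi$-labeled path of length exactly $l = n(M+1)$.

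Next I would show that all worlds $u_{k}, u_{k+1}, \dots, u_{k+l}$ lie in a single cluster. Indeed, for any $0 \leq j \leq l$ we have $u_k \mathrel R^* u_{k+j} \mathrel R^* u_{k+l} \mathrel R^+ u_k$, so $u_{k+j} \sim_R u_k$. This puts the sub-path in the scope of \cref{cor:seq-in}, which yields indices $a, b < l$ with $b \leq a$ and $u_{k+b} \in R(u_{k+a}) \cap \vartheta(\psi_{k+a})$. Translating back to the original path by adding $k$ to both indices, we obtain $k+a, k+b < C$ with $k+b \leq k+a$ and $u_{k+b} \in R(u_{k+a}) \cap \vartheta(\psi_{k+a})$, which is exactly the definition of reducibility.

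No step strikes me as a genuine obstacle; the only nuance is bookkeeping the indices to confirm that the sub-path is still optimal (immediate since optimality is a local condition preserved under taking contiguous sub-paths) and that the witnesses extracted from the sub-path remain within the range of the original path (which follows from $k + l \leq C$). The real combinatorial content has already been absorbed into \cref{cor:seq-out} and \cref{cor:seq-in}.
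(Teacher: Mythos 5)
Your proof is correct and follows exactly the paper's argument: decompose $C = lM + n$ with $l = n(M+1)$, use \cref{cor:seq-out} to find a stretch $u_k, \dots, u_{k+l}$ lying in a single cluster, and apply \cref{cor:seq-in} to reduce it. The extra index bookkeeping you supply is sound but the paper leaves it implicit.
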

    \begin{proof}
      Let $u_0, \psi_0, \dots, u_C$ be an optimal $\Psi$-labeled path in $\calM$. Notice that
      $C = lM + n$ for $l \defeq n(M + 1)$. By~\cref{cor:seq-out}, there is $k \leq l(M - 1) + n$
      such that $u_{k+l} \mathrel R^+ u_k$. Then the worlds $u_k, \dots, u_{k+l}$ belong to the same
      cluster. By~\cref{cor:seq-in}, $u_k, \psi_k, \dots, u_{k+l}$ is reducible. Thus,
      $u_0, \psi_0, \dots, u_C$ is also reducible.
    \end{proof}

  \section{Finite model property}
    \label{s:fmp} In this section, we prove \cref{t:K4-fmp,t:GL-fmp}. In both cases we use the
    method of selective filtration of the canonical model, maximality property, and combinatorial
    properties which were established in the previous section.

    \subsection{Pretransitive analogues of K4 and wK4}
  \label{s:K4-fmp} Let us prove~\cref{t:K4-fmp}. Suppose that $\Lambda$ is valid in all finite
  subframes of $\calF_\Lambda$ and contains $\mathrm{K4}_{\Dmd^2\sigma_n}$ for some $n \geq 1$. Let
  $\zeta$ be a formula non-derivable in $\Lambda$. Then $\{\neg\zeta\}$ is $\Lambda$-consistent and,
  by Lindenbaum's lemma, there is a world $x \in \vartheta_\Lambda(\neg\zeta)$. We construct the
  sets $W_k \subseteq W_\Lambda$, $k \in \omega$ by recursion. Let us put $W_0 \defeq \{x\}$.

  Suppose that, for some $k \in \omega$, $W_k$ is defined. Consider the set
  \begin{gather*}
    D_k \defeq \{(w, \psi) \in W_k \times \Psi^\zeta \mid \calM_\Lambda, w \vDash \Dmd\psi \}.
  \end{gather*}
  For $(w, \psi) \in D_k$, let us denote
  \begin{gather*}
    V^b_k(w, \psi) \defeq R_\Lambda(w) \cap \vartheta_\Lambda(\psi) \cap
    \bigcup_{l\leq k}W_l\quad\text{and}\quad V^f_k(w, \psi) \defeq \mathrm{max}\bigl(R_\Lambda(w)
    \cap \vartheta_\Lambda(\psi)\bigr)
  \end{gather*}
  (b and f mean backward and forward respectively). Notice that the set $V^f_k(w, \psi)$ is nonempty
  by~\cref{cor:max}. Then we put
  \begin{gather*}
    D^b_k \defeq \{(w, \psi) \in D_k \mid V^b_k(w, \psi) \neq \emptyset\}\quad\text{and}\quad D^f_k
    \defeq D_k \setminus D^b_k.
  \end{gather*}
  Let us fix some elements $v_k(w, \psi) \in V^b_k(w, \psi)$ for $(w, \psi) \in D^b_k$ and
  $v_k(w, \psi) \in V^f_k(w, \psi)$ for $(w, \psi) \in D^f_k$. Now, we put
  \begin{gather*}
    W_{k+1} \defeq \{ v_k(w, \psi) \mid (w, \psi) \in D^f_k \}.
  \end{gather*}

  Finally, let $W \defeq \bigcup_{k \in \omega}W_k$, $R \defeq R_\Lambda|_W$, and
  $\vartheta \defeq \vartheta_\Lambda|_W$. It is easy to see from the construction that
  $\calM \defeq (W, R, \vartheta)$ is a $\zeta$-selective submodel of $\calM_\Lambda$. Therefore,
  $\calM, x \nvDash \zeta$ by~\cref{l:filtration}. Also, notice that $|W_0| = 1$ and
  $|W_{k+1}| \leq |D_k| \leq |W_k||\Psi^\zeta|$. Therefore, $|W_k| \leq |\Psi^\zeta|^k$ and
  $|\bigcup_{l < k}W_l| \leq \sum_{l < k}|\Psi^\zeta|^l < |\Psi^\zeta|^k$.

  By induction on $k \in \omega$ we will show that, if $v \in W_k$, then there is an irreducible
  optimal $\Psi^\zeta$-labeled path $u_0, \psi_0, \dots, u_k$ of length $k$ in $\calM_\Lambda$ such
  that $u_k = v$ and $u_l \in W_l$ for $l < k$ (in particular, $u_0 = x$). The base case is trivial.
  Suppose that the statement holds for $k \in \omega$ and $v \in W_{k+1}$. Then, by construction,
  $v = v_k(w, \psi)$ for some $(w, \psi) \in D^f_k$, whence $w \in W_k$,
  $V^b_k(w, \psi) = \emptyset$, and $v \in \mathrm{max}(R_\Lambda(w) \cap \vartheta_\Lambda(\psi))$.
  By the induction hypothesis, there is an irreducible optimal $\Psi^\zeta$-labeled path
  $u_0, \psi_0, \dots, u_k$ such that $u_k = w$ and $u_l \in W_l$ for $l < k$. Let
  $\psi_k \defeq \psi$, $u_{k+1} \defeq v$. Since
  $v \in \mathrm{max}(R_\Lambda(w) \cap \vartheta_\Lambda(\psi))$, the labeled path
  $u_0, \psi_0, \dots, u_{k+1}$ is optimal. Let us show that it is irreducible. Suppose that there
  are $l, l' < k + 1$ such that $l' \leq l$ and
  $u_{l'} \in R_\Lambda(u_l) \cap \vartheta_\Lambda(\psi_l)$. Since $u_0, \psi_0, \dots, u_k$ is
  irreducible, $l = k$. Therefore,
  \begin{gather*}
    u_{l'} \in R_\Lambda(w) \cap \vartheta_\Lambda(\psi) \cap W_{l'} \subseteq V^b_k(w, \psi) =
    \emptyset.
  \end{gather*}
  Contradiction.

  By~\cref{cor:K4}, an irreducible $\Psi^\zeta$-labeled path of length $C \defeq n(M^2 + M + 1)$,
  where $M \defeq n^n|\Psi^\zeta|^n$, does not exist in $\calM_\Lambda$. Therefore,
  $W_C = \emptyset$ and $|W| = |\bigcup_{l < C}W_l| < |\Psi^\zeta|^C$. So, $\calF \defeq (W, R)$ is
  a finite subframe of $\calF_\Lambda$. By the condition of the theorem, $\calF \vDash \Lambda$. At
  the same time $\calF \nvDash \zeta$. Thus, $\Lambda$ has the finite model property.
 \subsection{Pretransitive analogues of GL}
  \label{s:GL-fmp} To prove~\cref{t:GL-fmp}, we need some extra lemmas.
  \begin{lemma}
    \label{l:u-1} For an $n$-transitive logic $\Lambda$ and $k \geq 1$,
    \begin{gather*}
      \Lambda \vdash \sigma_k(p) \wedge \neg\Dmd^{+n} q \impl \sigma_k(p \wedge \neg\Dmd^{+n} q).
    \end{gather*}
  \end{lemma}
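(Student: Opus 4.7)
The plan is to induct on $k$, with the base case $k = 1$ being trivial since $\sigma_1(p) \equiv p$, so both sides reduce to $p \wedge \neg\Dmd^{+n}q$.

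Before the inductive step, I would isolate one auxiliary fact about $\neg\Dmd^{+n}q$ in an $n$-transitive logic, namely
\begin{gather*}
  \Lambda \vdash \neg\Dmd^{+n}q \impl \Box\neg\Dmd^{+n}q,
\end{gather*}
or equivalently, $\Lambda \vdash \Dmd\Dmd^{+n}q \impl \Dmd^{+n}q$. This holds because
$\Dmd\Dmd^{+n}q \equiv \bigvee_{k=2}^{n+1}\Dmd^k q$, and the axiom $\Dmd^{n+1}q \impl \bigvee_{k=1}^n \Dmd^k q$ collapses this to $\Dmd^{+n}q$. Intuitively, on frames this just says that $R^+$ (which $\Dmd^{+n}$ captures when $\calF$ is $n$-transitive) is transitive.

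For the inductive step, suppose the claim holds for $k$, and consider $\sigma_{k+1}(p) \equiv p \wedge \Dmd\sigma_k(p)$. The conjunct $p \wedge \neg\Dmd^{+n}q$ appears directly on both sides of the desired implication, so it suffices to derive
\begin{gather*}
  \Dmd\sigma_k(p) \wedge \neg\Dmd^{+n}q \impl \Dmd\sigma_k(p \wedge \neg\Dmd^{+n}q).
\end{gather*}
Using the auxiliary fact, the antecedent implies $\Dmd\sigma_k(p) \wedge \Box\neg\Dmd^{+n}q$, which by normality gives $\Dmd(\sigma_k(p) \wedge \neg\Dmd^{+n}q)$. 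The induction hypothesis together with monotonicity then yields $\Dmd\sigma_k(p \wedge \neg\Dmd^{+n}q)$, completing the step.

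The only mildly delicate point is the auxiliary fact about $\Dmd^{+n}$ being ``$\Lambda$-transitive,'' but this is an immediate consequence of $n$-transitivity; everything else is a straightforward normal-modal manipulation, so I do not foresee a real obstacle.
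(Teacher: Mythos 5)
Your proof is correct and follows essentially the same route as the paper's: the same induction on $k$, the same auxiliary fact $\Lambda \vdash \neg\Dmd^{+n}q \impl \Box\neg\Dmd^{+n}q$ derived from the $n$-transitivity axiom, and the same use of normality plus the induction hypothesis in the inductive step. No issues.
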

  \begin{proof}
    We proceed by induction on $k$. For $k = 1$, the statement is clear.

    Suppose that $k > 1$ and the statement holds for $k-1$. Notice that
    \begin{align*}
      \Lambda \vdash \Dmd\Dmd^{+n} q &\impl \bigvee_{k=2}^n \Dmd^k q \vee \Dmd^{n+1}q
      &&\text{by normality}\\
      &\impl \Dmd^{+n} q, &&\text{since $\Lambda$ is $n$-transitive}.
    \end{align*}
    Therefore, $\Lambda \vdash \neg\Dmd^{+n} q \impl \Box\neg\Dmd^{+n} q$ and
    \begin{align*}
      \Lambda \vdash \sigma_k(p) \wedge \neg\Dmd^{+n} q &\impl \Dmd(\sigma_{k-1}(p) \wedge
      \neg\Dmd^{+n} q)
      &&\text{by normality}\\
      &\impl \Dmd\sigma_{k-1}(p \wedge \neg\Dmd^{+n} q)
      &&\text{by the induction hypothesis}.
    \end{align*}
    It is also clear that
    $\Lambda \vdash \sigma_k(p) \wedge \neg\Dmd^{+n} q \impl p \wedge \neg\Dmd^{+n} q$. Thus, the
    statement holds for $k$.
  \end{proof}
  \begin{lemma}
    \label{l:GL-irrefl} Suppose that $\Lambda \supseteq \mathrm{GL}_{\Dmd^2\sigma_n}$ for some
    $n \geq 1$, $w \in W_\Lambda$, $\varphi \in \mathrm{Fm}$. Then all worlds in
    $\mathrm{max}(\vartheta_\Lambda(\varphi))$ and
    $\mathrm{max}(R_\Lambda(w) \cap \vartheta_\Lambda(\varphi))$ are irreflexive.
  \end{lemma}
  \begin{proof}
    Suppose that $v \in \mathrm{max}(\vartheta_\Lambda(\varphi))$ is reflexive. Then
    $\calM_\Lambda, v \vDash \Dmd\varphi$ and, by \cref{l:GL-trans-lob},
    $\calM_\Lambda, v \vDash \Dmd(\varphi \wedge \neg\Dmd^{+n}\sigma_n(\varphi))$. Therefore,
    $\calM_\Lambda, u \vDash \varphi \wedge \neg\Dmd^{+n}\sigma_n(\varphi)$ for some
    $u \in R_\Lambda(v)$. Since $v$ is maximal, $u \mathrel R_\Lambda^+ v$, but this is impossible,
    because $\calM_\Lambda, v \vDash \sigma_n(\varphi)$.

    Now suppose that $v \in \mathrm{max}(R_\Lambda(w) \cap \vartheta_\Lambda(\varphi))$ is
    reflexive. Then $\calM_\Lambda, v \vDash \Dmd\sigma_n(\varphi)$. Notice that
    \begin{align*}
      \Lambda \vdash \Dmd\sigma_n(\varphi) &\impl \Dmd\bigl(\sigma_n(\varphi) \wedge
      \neg\Dmd^{+n}\sigma_n(\sigma_n(\varphi))\bigr)
      &&\text{by \cref{l:GL-trans-lob}}\\
      &\impl \Dmd\sigma_n\bigl(\varphi \wedge \neg\Dmd^{+n}\sigma_n(\sigma_n(\varphi))\bigr)
      &&\text{by \cref{l:u-1}}.
    \end{align*}
    Therefore, there are $u_1, \dots, u_n \in \vartheta_\Lambda\bigl(\varphi \wedge
    \neg\Dmd^{+n}\sigma_n(\sigma_n(\varphi))\bigr)$ such that
    $v \mathrel R_\Lambda u_1 \mathrel R_\Lambda \dots \mathrel R_\Lambda u_n$. Since
    $\calF_\Lambda \vDash {\rm A}4_{\Dmd^2\sigma_n}$, $w \mathrel R_\Lambda u_i$ for some
    $i \in \{1, \dots, n\}$. Notice that $u_i \in R_\Lambda(w) \cap \vartheta_\Lambda(\varphi)$ and
    $v \mathrel R^+_\Lambda u_i$. Since $v$ is maximal, $u_i \mathrel R^+_\Lambda v$. This is
    impossible, since $\calM_\Lambda, v \vDash \sigma_n(\sigma_n(\varphi))$.
  \end{proof}
  \begin{lemma}
    \label{l:GL-cycle} Suppose that $\Lambda \supseteq \mathrm{GL}_{\Dmd^2\sigma_n}$ for some
    $n \geq 1$. Let $u_0, \psi_0, \dots, u_m$ be an optimal labeled path in $\calM_\Lambda$ in which
    every world is irreflexive. Then $u_0 \notin R_\Lambda(u_m)$.
  \end{lemma}
  \begin{proof}
    The proof is by induction on $m$. Since $u_0$ is irreflexive, the base case $m = 0$ holds.
    Suppose that $m \geq 1$, $u_0, \psi_0, \dots, u_m$ is an optimal labeled path,
    $u_m \mathrel R_\Lambda u_0$, and the statement is true for all paths which length is less than
    $m$. In particular, the statement holds for the paths $u_k, \psi_k \dots u_{m-1}$, $k < m$, that
    is, $u_k \notin R_\Lambda(u_{m-1})$ for $k < m$. By tightness, there are
    $\eta_0, \dots, \eta_{m-1}$ such that
    \begin{gather*}
      \calM_\Lambda, u_k \vDash \eta_k \quad\text{and}\quad \calM_\Lambda, u_{m - 1} \nvDash
      \Dmd\eta_k.
    \end{gather*}
    Consider the formula $\varphi \defeq \bigvee_{k=0}^{m-1} \eta_k \vee \psi_{m-1}$. Since
    $\calM_\Lambda, u_{m-1} \vDash \Dmd\varphi$, by~\cref{l:GL-trans-lob}
    \begin{gather*}
      \calM_\Lambda, u_{m-1} \vDash \Dmd\bigl(\varphi \wedge \neg\Dmd^{+n}\sigma_n(\varphi)\bigr).
    \end{gather*}
    Moreover, since $\calM_\Lambda, u_{m-1} \nvDash \Dmd\eta_k$ for $k < m$,
    \begin{gather*}
      \calM_\Lambda, u_{m-1} \vDash \Dmd\bigl(\psi_{m-1} \wedge
      \neg\Dmd^{+n}\sigma_n(\varphi)\bigr).
    \end{gather*}
    Therefore, $\calM_\Lambda, v \vDash \psi_{m-1} \wedge \neg\Dmd^{+n}\sigma_n(\varphi)$ for some
    $v \in R_\Lambda(u_{m-1})$. Notice that $u_m \mathrel R_\Lambda^+ u_{m-1} \mathrel R_\Lambda v$.
    Since $u_m \in \mathrm{max}(R_\Lambda(u_{m-1}) \cap \vartheta_\Lambda(\psi_{m-1}))$,
    $v \mathrel R_\Lambda^+ u_m$. At the same time, $\varphi$ is true in all $u_k$, $k \leq m$, from
    which it is easy to show that $\sigma_n(\varphi)$ is also true in $u_k$. Contradiction.
  \end{proof}
  \begin{corollary}
    \label{cor:GL-C} Let $\Lambda$ be a logic extending $\mathrm{GL}_{\Dmd^2 \sigma_n}$, $\Psi$ be a
    finite set of formulas, $M \defeq n^n|\Psi|^n$. Then there is no optimal $\Psi$-labeled path in
    $\calM_\Lambda$ of length $C \defeq nM + n$ in which every world is irreflexive.
  \end{corollary}
  \begin{proof}
    Suppose that $w_0, \psi_0, \dots, w_C$ is an optimal $\Psi$-labeled path in $\calM_\Lambda$ in
    which all worlds are irreflexive. By \cref{cor:seq-out}, there is $k \leq nM$ such that
    $w_{k+n} \mathrel R^+_\Lambda w_k$. Then, by \cref{cor:4-cond-global},
    $w_{k+n} \mathrel R_\Lambda w_{k'}$ for some $k'$, $k \leq k' < k + n$. Thus, the labeled path
    $w_{k'}, \psi_{k'+1}, \dots, w_{k+n}$ contradicts \cref{l:GL-cycle}.
  \end{proof}
  Now, let us prove~\cref{t:GL-fmp}. Suppose that $\Lambda$ is valid in all finite conversely
  well-founded semisubframes of $\calF_\Lambda$ and contains $\mathrm{GL}_{\Dmd^2\sigma_n}$ for some
  $n \geq 1$. Let $\zeta$ be a formula non-derivable in $\Lambda$. Then $\{\neg\zeta\}$ is
  $\Lambda$-consistent and, by~\cref{l:max}, there is a world
  $x \in \mathrm{max}(\vartheta_\Lambda(\neg\zeta))$. We construct sets $W_k \subseteq W_\Lambda$
  and relations $S_k \subseteq R_\Lambda$ for $k \in \omega$ by recursion. Let us put
  $W_0 \defeq \{x\}$, $S_0 \defeq \emptyset$.

  Suppose that, for some $k \in \omega$, $W_k$ is defined. Consider the set
  \begin{gather*}
    D_k \defeq \{(w, \psi) \in W_k \times \Psi^\zeta \mid \calM_\Lambda, w \vDash \Dmd\psi \}.
  \end{gather*}
  For $(w, \psi) \in D_k$, we fix some elements
  $v_k(w, \psi) \in \mathrm{max}(R_\Lambda(w) \cap \vartheta_\Lambda(\psi))$. Now, we put
  \begin{gather*}
    W_{k+1} \defeq \{v_k(w, \psi) \mid (w, \psi) \in D_k\} \setminus \bigcup_{l \leq k}W_l,\\
    S_{k+1} \defeq \bigl\{\bigl(w, v_k(w, \psi)\bigr) \,\big|\, (w, \psi) \in D_k\bigr\}.
  \end{gather*}

  Finally, let $W \defeq \bigcup_{k \in \omega}W_k$, $S \defeq \bigcup_{k \in \omega}S_k$,
  $R \defeq R_\Lambda \cap S^*$, and $\vartheta \defeq \vartheta_\Lambda|_W$. Notice that, for all
  $w, v \in W$,
  \begin{align*}
    w \mathrel R^* v \wedge w \mathrel R_\Lambda v &\Rightarrow w \mathrel S^* v \wedge w
    \mathrel R_\Lambda v\\
    &\Rightarrow w \mathrel R v.
  \end{align*}
  Therefore, $\calF \defeq (W, R)$ is a semisubframe of $\calF_\Lambda$. Suppose that
  $R_\Lambda(w) \cap \vartheta_\Lambda(\psi) \neq \emptyset$ for some $w \in W$ and
  $\psi \in \Psi^\zeta$. Let $k \in \omega$ be such that $w \in W_k$. Then $(w, \psi) \in D_k$ and,
  by construction, there is $v = v_k(w, \psi) \in R_\Lambda(w) \cap \vartheta_\Lambda(\psi)$ such
  that $w \mathrel S v$, whence $w \mathrel R v$. Therefore, $\calM \defeq (\calF, \vartheta)$ is a
  $\zeta$-selective semisubmodel of $\calM_\Lambda$ and $\calM, x \nvDash \zeta$
  by~\cref{l:filtration}.

  Similarly to the proof of~\cref{t:K4-fmp}, one can check that if $v \in W_k$, then there is an
  optimal $\Psi^\zeta$-labeled path $u_0, \psi_0, \dots, u_k$ of length $k$ in $\calM_\Lambda$ such
  that $u_k = v$ and $u_l \in W_l$ for $l < k$. By~\cref{l:GL-irrefl}, $R_\Lambda|_W$ is
  irreflexive, whence, by~\cref{cor:GL-C}, there is no $\Psi^\zeta$-optimal labeled path of length
  $C \defeq n^{n+1}|\Psi^\zeta|^n + n$ in $\calM_\Lambda$. Therefore, $W_C = \emptyset$ and
  $|W| \leq |\bigcup_{l < C}W_l| < |\Psi^\zeta|^C$. So, $\calF = (W, R)$ is a finite semisubframe of
  $\calF_\Lambda$.

  Suppose that $\calF$ is not conversely well-founded. Then there is $w \in W$ such that
  $w \mathrel R^+ w$. Since $R_\Lambda|_W$ is irreflexive, $R = R_\Lambda \cap S^+$, whence
  $R^+ \subseteq S^+$. Therefore,
  \begin{gather*}
    w = u_0 \mathrel S u_1 \mathrel S \dots \mathrel S u_m \mathrel S w
  \end{gather*}
  for some $m \geq 1$, $u_1, \dots, u_m \in W$. By construction, there are formulas
  $\psi_0, \dots, \psi_{m-1}$ such that $u_0, \psi_0, \dots, u_m$ is an optimal labeled path. Since
  $u_m \mathrel S w$, $u_m \mathrel R_\Lambda w$, contradicting~\cref{l:GL-cycle}.

  So, $\calF$ is a finite conversely well-founded semisubframe of $\calF_\Lambda$. By the condition
  of the theorem, $\calF \vDash \Lambda$. At the same time, $\calF \nvDash \zeta$. Thus, $\Lambda$
  has the finite model property.

  \section{Pretransitive analogues of GL and L\"ob's rule}
\label{s:gl-equiv}
It is well-known~\cite[p.~59]{Bool93} that
\begin{gather*}
  \mathrm{GL} = \mathrm{K4} + \frac{\Box\varphi \impl \varphi}{\varphi} = \mathrm{wK4} +
  \frac{\Box\varphi \impl \varphi}{\varphi},
\end{gather*}
where $\Lambda + {R}$ means the closure of the logic $\Lambda$ under the rule $R$. In this section
we show that the same takes place for $\mathrm{GL}_{\Dmd\beta}$ and $\mathrm{(w)K4}_{\Dmd\beta}$.

\begin{proposition}
  \label{prop:gl-equiv} For an $n$-transitive logic $\Lambda$, the following logics coinside:
  \begin{enumerate}
    \item $\Lambda_1 \defeq \Lambda + \Dmd^{+n} p \impl \Dmd^{+n}(p \wedge \neg\Dmd^{+n} p)$;
    \item $\Lambda_2 \defeq \Lambda + \Dmd p \impl \Dmd^{+n}(p \wedge \neg\Dmd p)$;
    \item $\Lambda_3 \defeq \Lambda + \frac{\Box^{+n}\varphi \impl \varphi}{\varphi}$;
    \item $\Lambda_4 \defeq \Lambda + \frac{\Box \varphi \impl \varphi}{\varphi}$.
  \end{enumerate}
\end{proposition}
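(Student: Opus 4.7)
\medskip
The plan is to establish the cyclic chain of inclusions
\[
  \Lambda_2 \;\subseteq\; \Lambda_1 \;\subseteq\; \Lambda_3 \;\subseteq\; \Lambda_4 \;\subseteq\; \Lambda_2.
\]
For the third inclusion I shall rely on two facts about the derived modality $\Box^{+n}$. First, since $\Box^{+n}\varphi = \bigwedge_{k=1}^n \Box^k\varphi$ is a conjunction of normal boxes, $\mathrm{K}$ already proves the K-axiom $\Box^{+n}(\alpha \impl \beta) \impl (\Box^{+n}\alpha \impl \Box^{+n}\beta)$. Second, $n$-transitivity of $\Lambda$ yields the ``4''-axiom for $\Box^{+n}$, namely $\Lambda \vdash \Box^{+n}\varphi \impl \Box^{+n}\Box^{+n}\varphi$; dually, $\Lambda \vdash \Dmd^{+n}\Dmd^{+n}\varphi \impl \Dmd^{+n}\varphi$, which follows by a short induction on $k$ showing $\Lambda \vdash \Dmd^k \Dmd^{+n}\varphi \impl \Dmd^{+n}\varphi$, with base case $\Dmd \Dmd^{+n}\varphi \impl \Dmd^{+n}\varphi$ given by $\Dmd^{n+1}\varphi \impl \Dmd^{+n}\varphi$ and the disjunctive unfolding of $\Dmd \Dmd^{+n}\varphi$.

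\medskip
The inclusion $\Lambda_2 \subseteq \Lambda_1$ is immediate: using $\Dmd p \impl \Dmd^{+n} p$ on the antecedent of the $\Lambda_1$-axiom, and $\neg \Dmd^{+n} p \impl \neg \Dmd p$ (by monotonicity) under the $\Dmd^{+n}$ in the succedent, one obtains exactly the axiom of $\Lambda_2$. The inclusion $\Lambda_3 \subseteq \Lambda_4$ amounts to showing that $\Lambda_4$ is closed under the $\Box^{+n}$-L\"ob rule: given $\Lambda_4 \vdash \Box^{+n}\varphi \impl \varphi$, set $\psi \defeq \bigwedge_{k=0}^{n-1}\Box^k\varphi$; then $\Box\psi = \Box^{+n}\varphi$ by $\mathrm{K}$-normality of $\Box$, and the hypothesis combined with the tautology $\Box^{+n}\varphi \impl \bigwedge_{k=1}^{n-1}\Box^k\varphi$ yields $\Box\psi \impl \psi$. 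The classical L\"ob rule then gives $\vdash\psi$, hence $\vdash\varphi$.

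\medskip
The core of the argument is $\Lambda_1 \subseteq \Lambda_3$. By substituting $p \defeq \neg\varphi$ and contraposing, the axiom of $\Lambda_1$ is equivalent to the ``$n$-transitive L\"ob axiom''
\[
  \Box^{+n}(\Box^{+n}\varphi \impl \varphi) \impl \Box^{+n}\varphi.
\]
Setting $\theta \defeq \Box^{+n}(\Box^{+n}p \impl p) \impl \Box^{+n}p$, it suffices to derive $\Lambda \vdash \Box^{+n}\theta \impl \theta$ and then apply the $\Box^{+n}$-L\"ob rule. This is a direct transcription of the classical proof that $\mathrm{K4}$ plus L\"ob's rule derives L\"ob's axiom, with $\Box^{+n}$ in place of $\Box$: the K-axiom for $\Box^{+n}$ applied to $\Box^{+n}\theta$ yields $\Box^{+n}\Box^{+n}(\Box^{+n}p \impl p) \impl \Box^{+n}\Box^{+n}p$; the 4-axiom for $\Box^{+n}$ lifts the assumed antecedent $\Box^{+n}(\Box^{+n}p \impl p)$ to $\Box^{+n}\Box^{+n}(\Box^{+n}p \impl p)$; hence $\Box^{+n}\Box^{+n}p$; and a final application of the K-axiom for $\Box^{+n}$ to $\Box^{+n}(\Box^{+n}p \impl p)$ strips off a $\Box^{+n}$ to conclude $\Box^{+n}p$.

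\medskip
Finally, for $\Lambda_4 \subseteq \Lambda_2$, I show $\Lambda_2$ is closed under the classical L\"ob rule. Suppose $\Lambda_2 \vdash \Box\varphi \impl \varphi$; then $\Lambda_2 \vdash \neg(\neg\varphi \wedge \Box\varphi)$, and iterated monotonicity gives $\Lambda_2 \vdash \Box^k \neg(\neg\varphi \wedge \Box\varphi)$ for $k = 1, \ldots, n$, whence $\Lambda_2 \vdash \neg\Dmd^{+n}(\neg\varphi \wedge \Box\varphi)$. On the other hand, the axiom of $\Lambda_2$ instantiated with $p \defeq \neg\varphi$ reads $\Dmd\neg\varphi \impl \Dmd^{+n}(\neg\varphi \wedge \Box\varphi)$; combining the two yields $\Lambda_2 \vdash \Box\varphi$, and one further application of the hypothesis delivers $\varphi$. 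The main obstacle is the derivation of paragraph three: setting up and verifying that the compound modality $\Box^{+n}$ satisfies both the K- and the 4-axiom inside $\Lambda$ (the latter being exactly where $n$-transitivity is essential); the remaining three inclusions are essentially propositional manipulation together with standard reasoning about $\Box$.
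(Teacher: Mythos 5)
Your proof is correct, but it traverses the cycle of inclusions in essentially the opposite direction from the paper, sharing only the trivial edge $\Lambda_1 \supseteq \Lambda_2$. The paper proves $\Lambda_1 \supseteq \Lambda_2 \supseteq \Lambda_3 \supseteq \Lambda_4 \supseteq \Lambda_1$: the first and third links are one-liners, $\Lambda_2 \supseteq \Lambda_3$ follows because the boxed dual $\Box^{+n}(\Box p\impl p)\impl\Box p$ of the $\Lambda_2$-axiom lets one discharge the premise of the $\Box^{+n}$-rule directly, and $\Lambda_4 \supseteq \Lambda_1$ is obtained by showing $\Lambda\vdash\Box\mathrm{A}\impl\mathrm{A}$ for the $\Lambda_1$-axiom $\mathrm{A}$ (using only $\Dmd\Dmd^{+n}p\impl\Dmd^{+n}p$) and applying the plain L\"ob rule. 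You instead prove $\Lambda_3\supseteq\Lambda_1$ by treating $\Box^{+n}$ as a normal modality satisfying the K- and 4-laws in any $n$-transitive logic and transcribing the classical derivation of L\"ob's axiom from $\mathrm{K4}$ plus L\"ob's rule; $\Lambda_4\supseteq\Lambda_3$ via the conjunction $\psi\defeq\bigwedge_{k=0}^{n-1}\Box^k\varphi$, which converts the $\Box^{+n}$-rule into an instance of the $\Box$-rule; and $\Lambda_2\supseteq\Lambda_4$ by instantiating the $\Lambda_2$-axiom at $\neg\varphi$ to close $\Lambda_2$ under the ordinary L\"ob rule. All four of your steps are sound: the K-law for $\Box^{+n}$ is indeed K-provable, the 4-law is exactly where $n$-transitivity enters, and the identification of $\Box\psi$ with $\Box^{+n}\varphi$ is only a K-provable equivalence rather than a syntactic identity, which is harmless. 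Your route buys a cleaner conceptual picture --- $\Box^{+n}$ is a $\mathrm{K4}$ master modality and everything reduces to the classical $\mathrm{GL}$ facts about it --- at the cost of one extra nontrivial inclusion; the paper's route is shorter because two of its four links are immediate and it never needs the K/4 calculus for $\Box^{+n}$ beyond the single fact $\Dmd\Dmd^{+n}p\impl\Dmd^{+n}p$.
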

\begin{proof}
  It is clear that $\Lambda_1 \supseteq \Lambda_2$ and $\Lambda_3 \supseteq \Lambda_4$.

  Let us prove that $\Lambda_2 \supseteq \Lambda_3$. Suppose that
  $\Lambda_2 \vdash \Box^{+n}\varphi \impl \varphi$. Then
  $\Lambda_2 \vdash \Box\Box^{+n}\varphi \impl \Box^{+n}\varphi$. Therefore,
  $\Lambda_2 \vdash \Box^{+n}(\Box\Box^{+n}\varphi \impl \Box^{+n}\varphi)$. Since
  $\Lambda_2 \vdash \Box^{+n}(\Box p \impl p) \impl \Box p$,
  $\Lambda_2 \vdash \Box\Box^{+n}\varphi$. Then $\Lambda_2 \vdash \Box^{+n}\varphi$ and
  $\Lambda_2 \vdash \varphi$.

  Now we prove that $\Lambda_4 \supseteq \Lambda_1$. Notice that
  \begin{gather*}
    \mathrm{K} \vdash \Dmd^{+n} p \impl \Dmd\Dmd^{+n} p \vee \Dmd(p \wedge \neg\Dmd^{+n} p).
  \end{gather*}
  Also we have
  \begin{align*}
    \Lambda \vdash \Box\bigl(\Dmd^{+n} p \impl \Dmd^{+n}(p \wedge \neg\Dmd^{+n} p)\bigr) \wedge
    \Dmd\Dmd^{+n} p &\impl \Dmd\Dmd^{+n}(p \wedge \neg\Dmd^{+n} p)\\
    &\impl \Dmd^{+n}(p \wedge \neg\Dmd^{+n} p).
  \end{align*}
  Therefore,
  \begin{gather*}
    \Lambda \vdash \Box\bigl(\Dmd^{+n} p \impl \Dmd^{+n}(p \wedge \neg\Dmd^{+n} p)\bigr) \impl
    \bigl(\Dmd^{+n} p \impl \Dmd^{+n}(p \wedge \neg\Dmd^{+n} p)\bigr).
  \end{gather*}
  Thus, $\Lambda_4 \vdash \Dmd^{+n} p \impl \Dmd^{+n}(p \wedge \neg\Dmd^{+n} p)$.
\end{proof}
Denote ${\rm AL\ddot ob}^{+n} \defeq \Dmd^{+n} p \impl \Dmd^{+n}(p \wedge \neg\Dmd^{+n} p)$.
\begin{lemma}
  \label{l:trans-lob-frames} For an $n$-transitive frame $\calF$,
  $\calF \vDash {\rm AL\ddot ob}^{+n}$ iff $\calF$ is conversely well-founded.
\end{lemma}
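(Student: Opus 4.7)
The plan is to exploit the fact that $n$-transitivity turns $\Dmd^{+n}$ into a modality for the transitive closure $R^+$: for any valuation $\vartheta$ on $\calF$, $\vartheta(\Dmd^{+n}\varphi) = \{w \mid R^+(w) \cap \vartheta(\varphi) \neq \emptyset\}$. Once this is noted, the lemma reduces to the classical characterization of the L\"ob axiom as expressing converse well-foundedness, transferred to the relation $R^+$. Moreover, $R^+$ is conversely well-founded iff $R$ is (an infinite ascending $R^+$-chain would concatenate into an infinite ascending $R$-chain, and vice versa).

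For the direction $(\Leftarrow)$, I would assume $\calF$ is conversely well-founded, fix a valuation $\vartheta$ and a world $w$ with $\calF,\vartheta,w \vDash \Dmd^{+n} p$, and set $X \defeq R^+(w) \cap \vartheta(p)$, which is nonempty. Since $R^+$ is conversely well-founded, every nonempty subset of $W$ has an $R^+$-maximal element (using dependent choice), so pick $v \in X$ with no $R^+$-successor inside $X$. Because $R^+$ is transitive, any $u$ with $v \mathrel R^+ u$ automatically satisfies $w \mathrel R^+ u$, hence such a $u$ could not lie in $\vartheta(p)$. This gives $\calF,\vartheta,v \vDash p \wedge \neg \Dmd^{+n} p$, and since $w \mathrel R^+ v$ we conclude $\calF,\vartheta,w \vDash \Dmd^{+n}(p \wedge \neg\Dmd^{+n} p)$.

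For the direction $(\Rightarrow)$, I would prove the contrapositive. Suppose $\calF$ is not conversely well-founded and fix an infinite ascending chain $w_0 \mathrel R w_1 \mathrel R w_2 \mathrel R \dots$. Define $\vartheta(p) \defeq \{w_i \mid i \in \omega\}$. Then every $w_i$ satisfies $\Dmd p$, hence $\Dmd^{+n} p$, so $\vartheta(p \wedge \neg\Dmd^{+n} p) = \emptyset$. Consequently $\calF,\vartheta,w_0 \vDash \Dmd^{+n} p$ but $\calF,\vartheta,w_0 \nvDash \Dmd^{+n}(p \wedge \neg\Dmd^{+n} p)$, refuting ${\rm AL\ddot ob}^{+n}$.

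No step is really an obstacle; the only point that needs care is the appeal to converse well-foundedness giving maximal elements in arbitrary nonempty subsets (the standard equivalence via dependent choice), and the observation that transitivity of $R^+$ lets us transfer maximality inside $R^+(w) \cap \vartheta(p)$ to the required validity of $\neg\Dmd^{+n} p$ at the chosen witness.
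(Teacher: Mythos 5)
Your proof is correct and follows essentially the same route as the paper: both reduce the claim, via $n$-transitivity making $\Dmd^{+n}$ the modality for $R^+$, to the classical fact that the L\"ob axiom over a transitive relation corresponds to converse well-foundedness, together with the observation that $R^+$ is conversely well-founded iff $R$ is. The only difference is that the paper simply cites the GL frame-correspondence for $(W,R^+)$, while you spell out its standard proof (maximal witness via dependent choice in one direction, the ascending-chain valuation in the other), which is fine.
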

\begin{proof}
  Since $\Dmd^{+n} p$ expresses transitive closure in $\calF = (W, R)$,
  \begin{align*}
    \calF \vDash {\rm AL\ddot ob}^{+n}
    &\Iff (W, R^+) \vDash \mathrm{GL},\\
    &\Iff R^+ \text{ is transitive and conversely well-founded}.
  \end{align*}
  $R^+$ is always transitive. It is conversely well-founded iff $R$ is conversely well-founded.
\end{proof}
\begin{corollary}
  \label{cor:55} If $n \geq d(\beta)$, then
  $\mathrm{K4}_{\Dmd\beta} + {\rm AL\ddot ob}^{+n} \subseteq \mathrm{GL}_{\Dmd\beta}$.
\end{corollary}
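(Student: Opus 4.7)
The plan is to reduce the inclusion to showing $\mathrm{GL}_{\Dmd\beta} \vdash {\rm AL\ddot ob}^{+n}$, since $\mathrm{K4}_{\Dmd\beta} \subseteq \mathrm{GL}_{\Dmd\beta}$ is already given by \cref{K4-subset-GL}. Once this single schema is derivable in $\mathrm{GL}_{\Dmd\beta}$, the whole left-hand side is contained in $\mathrm{GL}_{\Dmd\beta}$ by the definition of $\Lambda + \varphi$.

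To establish $\mathrm{GL}_{\Dmd\beta} \vdash {\rm AL\ddot ob}^{+n}$, I would argue semantically. \cref{GL-fmp}, together with the fact that $\mathrm{GL}_{\Dmd\beta}$ satisfies the hypotheses of that theorem (as noted in \cref{ss:main}), yields the finite model property and hence Kripke completeness of $\mathrm{GL}_{\Dmd\beta}$. So it suffices to check that ${\rm AL\ddot ob}^{+n}$ is valid in every $\mathrm{GL}_{\Dmd\beta}$-frame $\calF = (W,R)$. By \cref{GL-frames}, any such $\calF$ has conversely well-founded $R$. By the assumption $n \geq d(\beta)$ and \cref{cor:inclusions}, $\mathrm{GL}_{\Dmd\beta} \supseteq \mathrm{K4}_{\Dmd^2\sigma_n}$, and the latter is $n$-transitive by \cref{pr:delta-n-trans}; hence $\calF$ itself is $n$-transitive. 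Now \cref{l:trans-lob-frames} gives exactly $\calF \vDash {\rm AL\ddot ob}^{+n}$.

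There is no real obstacle; the corollary is a routine assembly of three ingredients already established in the paper, namely the frame characterization of $\mathrm{GL}_{\Dmd\beta}$ (converse well-foundedness plus $\mathrm{wK4}_{\Dmd\beta}$), the $n$-transitivity inherited from $\mathrm{K4}_{\Dmd^2\sigma_n}$ under the hypothesis $n \geq d(\beta)$, and Kripke completeness supplied by \cref{GL-fmp}. If one wished to avoid invoking the finite model property, an alternative would be to combine \cref{prop:gl-equiv} with \cref{GL-trans-lob} to derive the $\Lambda_2$-style axiom $\Dmd p \impl \Dmd^{+n}(p \wedge \neg\Dmd p)$ syntactically in $\mathrm{GL}_{\Dmd\beta}$, but the semantic route above is shorter and relies only on results stated earlier in the text.
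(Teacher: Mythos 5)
Your proposal is correct and takes essentially the same route as the paper's own proof: Kripke completeness of $\mathrm{GL}_{\Dmd\beta}$ from \cref{GL-fmp}, converse well-foundedness from \cref{GL-frames}, $n$-transitivity of its frames from \cref{cor:inclusions}, and \cref{l:trans-lob-frames} to conclude validity of ${\rm AL\ddot ob}^{+n}$. The only cosmetic difference is that you dispatch the $\mathrm{K4}_{\Dmd\beta}$ part via \cref{K4-subset-GL} rather than reading it off \cref{GL-frames}.
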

\begin{proof}
  By \cref{cor:fmp}, $\mathrm{GL}_{\Dmd\beta}$ is Kripke-complete. Let $\calF$ be a
  $\mathrm{GL}_{\Dmd\beta}$-frame. Then, by \cref{pr:GL-frames},
  $\calF \vDash \mathrm{K4}_{\Dmd\beta}$ and $\calF$ is well-founded. By~\cref{cor:inclusions}
  $\calF$ is $n$-transitive, therefore $\calF \vDash {\rm AL\ddot ob}^{+n}$ by
  \cref{l:trans-lob-frames}.
\end{proof}
\begin{lemma}
  \label{l:54} For all $n \geq 1$,
  $\mathrm{K4}_{\Dmd^2\sigma_n} + {\rm AL\ddot ob}^{+n} \supseteq \mathrm{GL}_{\Dmd^2\sigma_n}$.
\end{lemma}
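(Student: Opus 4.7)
The plan is to derive the defining axiom $\Dmd p \impl \Dmd(p \wedge \neg \Dmd\sigma_n(p))$ of $\mathrm{GL}_{\Dmd^2\sigma_n}$ inside $\Lambda \defeq \mathrm{K4}_{\Dmd^2\sigma_n} + {\rm AL\ddot ob}^{+n}$ by a direct syntactic argument. It proceeds in two stages: first I establish an auxiliary implication from the ``global'' $\Dmd^{+n}$-form of L\"ob to a ``local'' $\Dmd$-form, and then I finish with a short case split that reduces an arbitrary instance of $\Dmd p$ to the hypothesis of that implication.

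The intermediate step is to prove
\begin{gather*}
  \Lambda \vdash \Dmd^{+n}\sigma_n(p) \impl \Dmd(p \wedge \neg \Dmd\sigma_n(p)). \qquad (\dagger)
\end{gather*}
Substituting $\sigma_n(p)$ for $p$ in ${\rm AL\ddot ob}^{+n}$ yields $\Dmd^{+n}\sigma_n(p) \impl \Dmd^{+n}(\sigma_n(p) \wedge \neg\Dmd^{+n}\sigma_n(p))$. Since $\mathrm{K4}_{\Dmd^2\sigma_n}$ is $n$-transitive by~\cref{pr:delta-n-trans}, \cref{l:u-1} applies with $k = n$ and $q = \sigma_n(p)$ and gives $\sigma_n(p) \wedge \neg\Dmd^{+n}\sigma_n(p) \impl \sigma_n(p \wedge \neg\Dmd^{+n}\sigma_n(p))$; monotonicity therefore strengthens the right-hand side of the previous implication to $\Dmd^{+n}\sigma_n(p \wedge \neg\Dmd^{+n}\sigma_n(p))$. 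Then \cref{GL-trans-lob}, applied with $\xi \defeq p \wedge \neg\Dmd^{+n}\sigma_n(p)$, collapses this outer $\Dmd^{+n}\sigma_n$ to $\Dmd\xi$, yielding $\Dmd^{+n}\sigma_n(p) \impl \Dmd(p \wedge \neg\Dmd^{+n}\sigma_n(p))$. Finally, the trivial implication $\neg\Dmd^{+n}\sigma_n(p) \impl \neg\Dmd\sigma_n(p)$ together with $\Dmd$-monotonicity produce $(\dagger)$.

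To finish, I use the $\mathrm{K}$-tautology
$\Dmd p \impl \Dmd(p \wedge \neg\Dmd\sigma_n(p)) \vee \Dmd(p \wedge \Dmd\sigma_n(p))$,
derivable by normality from the propositional tautology $p \impl (p \wedge \neg\Dmd\sigma_n(p)) \vee (p \wedge \Dmd\sigma_n(p))$. The second disjunct implies $\Dmd^2\sigma_n(p)$ by monotonicity, and in $\mathrm{K4}_{\Dmd^2\sigma_n}$ we have $\Dmd^2\sigma_n(p) \impl \Dmd^{+n}\sigma_n(p)$: for $n \geq 2$ this is a direct disjunct of $\Dmd^{+n}\sigma_n(p)$, and for $n = 1$ it is the $\mathrm{K4}$-axiom $\Dmd^2 p \impl \Dmd p$. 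By $(\dagger)$, the second disjunct then also implies $\Dmd(p \wedge \neg\Dmd\sigma_n(p))$, so both cases yield the target. The only delicate step is the assembly of $(\dagger)$, where the substitutions must be chosen so that \cref{l:u-1} and \cref{GL-trans-lob} mesh correctly around ${\rm AL\ddot ob}^{+n}$; once $(\dagger)$ is in hand, the case-split finish is immediate.
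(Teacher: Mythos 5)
Your proof is correct and follows essentially the same route as the paper: instantiate ${\rm AL\ddot ob}^{+n}$ at $\sigma_n(p)$, push the $\neg\Dmd^{+n}\sigma_n$ conjunct inside via \cref{l:u-1}, collapse $\Dmd^{+n}\sigma_n(\cdot)$ to $\Dmd(\cdot)$ via \cref{GL-trans-lob}, and finish with the case split $\Dmd p \impl \Dmd(p\wedge\neg\Dmd\sigma_n)\vee\Dmd^2\sigma_n$. Your explicit handling of the $n=1$ case in the step $\Dmd^2\sigma_n\impl\Dmd^{+n}\sigma_n$ is a small point the paper leaves implicit, but the argument is the same.
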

\begin{proof}
  Notice that
  \begin{align*}
    \mathrm{K4}_{\Dmd^2\sigma_n} + {\rm AL\ddot ob}^{+n} \vdash \Dmd^{+n}\sigma_n &\impl
    \Dmd^{+n}(\sigma_n \wedge \neg\Dmd^{+n}\sigma_n)
    &&\text{by ${\rm AL\ddot ob}^{+n}$}\\
    &\impl \Dmd^{+n}\sigma_n(p \wedge \neg\Dmd^{+n}\sigma_n)
    &&\text{by \cref{l:u-1}}\\
    &\impl \Dmd(p \wedge \neg\Dmd^{+n}\sigma_n)
    &&\text{by \cref{l:GL-trans-lob}}.
  \end{align*}
  Therefore,
  \begin{align*}
    \mathrm{K4}_{\Dmd^2\sigma_n} + {\rm AL\ddot ob}^{+n} \vdash \Dmd p &\impl \Dmd(p \wedge
    \neg\Dmd\sigma_n) \vee \Dmd^2\sigma_n\\
    &\impl \Dmd(p \wedge \neg\Dmd\sigma_n).
  \end{align*}
\end{proof}
\begin{theorem}
  \label{t:gl-equiv} For $\Lambda \in \{\mathrm{K4}_{\Dmd\beta}, \mathrm{wK4}_{\Dmd\beta}\}$ the
  logics $\Lambda_1$, $\Lambda_2$, $\Lambda_3$, and $\Lambda_4$ from \cref{prop:gl-equiv} are equal
  to $\mathrm{GL}_{\Dmd\beta}$.
\end{theorem}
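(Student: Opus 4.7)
The plan is to reduce the statement, via \cref{prop:gl-equiv}, to proving the single equality $\Lambda_1 = \mathrm{GL}_{\Dmd\beta}$, and then to obtain the harder inclusion $\mathrm{GL}_{\Dmd\beta} \subseteq \Lambda_1$ semantically: I will show that $\Lambda_1$ has the finite model property by applying \cref{GL-fmp} to it, and then identify the frame classes of $\Lambda_1$ and $\mathrm{GL}_{\Dmd\beta}$ to conclude equality via Kripke completeness of both logics. A direct syntactic derivation of ${\rm AL\ddot ob}_{\Dmd\beta}$ from $\Lambda+{\rm AL\ddot ob}^{+n}$ looks hard for general $\beta$ because, unlike the case of $\sigma_n$, one does not have $\beta\impl\Dmd\sigma_n$ in $\mathrm{K}$, so the weakening trick used in \cref{l:54} does not apply in reverse.

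First I fix $n$ large enough so that $\Lambda$ is $n$-transitive and contains $\mathrm{K4}_{\Dmd^2\sigma_n}$; by \cref{cor:inclusions} and \cref{pr:class-n-trans}, $n=d(\beta)$ works for $\mathrm{K4}_{\Dmd\beta}$ and $n=d(\beta)+1$ for $\mathrm{wK4}_{\Dmd\beta}$. Then \cref{prop:gl-equiv} gives $\Lambda_1=\Lambda_2=\Lambda_3=\Lambda_4$, and it suffices to show $\Lambda_1=\mathrm{GL}_{\Dmd\beta}$. The easy inclusion $\Lambda_1\subseteq\mathrm{GL}_{\Dmd\beta}$ follows from $\Lambda\subseteq\mathrm{K4}_{\Dmd\beta}\subseteq\mathrm{GL}_{\Dmd\beta}$ (by \cref{K4-subset-GL}) together with ${\rm AL\ddot ob}^{+n}\in\mathrm{GL}_{\Dmd\beta}$ (by \cref{cor:55}).

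For the reverse inclusion I check the two hypotheses of \cref{GL-fmp} for $\Lambda_1$. Hypothesis (i), $\Lambda_1\supseteq\mathrm{GL}_{\Dmd^2\sigma_n}$, combines $\Lambda\supseteq\mathrm{K4}_{\Dmd^2\sigma_n}$ (from \cref{cor:inclusions}) with \cref{l:54}. Hypothesis (ii), that $\Lambda_1$ is valid on every finite conversely well-founded semisubframe of $\calF_{\Lambda_1}$, I justify as follows: since $\Lambda$ is Sahlqvist, \cref{c:canon-subframe} gives $\calF_{\Lambda_1}\vDash\Lambda$; by \cref{t-inherit} every semisubframe $\calF'$ of $\calF_{\Lambda_1}$ also validates $\Lambda$; if in addition $\calF'$ is conversely well-founded, \cref{GL-frames} yields $\calF'\vDash\mathrm{GL}_{\Dmd\beta}$, and the already-established inclusion $\Lambda_1\subseteq\mathrm{GL}_{\Dmd\beta}$ gives $\calF'\vDash\Lambda_1$. \cref{GL-fmp} then grants $\Lambda_1$ the finite model property and, hence, Kripke completeness. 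A direct check using \cref{GL-frames} and \cref{l:trans-lob-frames} shows that the $\Lambda_1$-frames coincide with the $\mathrm{GL}_{\Dmd\beta}$-frames — in both cases they are exactly the conversely well-founded $\mathrm{(w)K4}_{\Dmd\beta}$-frames — so Kripke completeness of the two logics forces $\Lambda_1=\mathrm{GL}_{\Dmd\beta}$.

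The main obstacle is hypothesis (ii) above: ${\rm AL\ddot ob}^{+n}$ is not canonical in general, so $\calF_{\Lambda_1}$ need not validate $\Lambda_1$ itself. The key observation that sidesteps this is that converse well-foundedness of the semisubframe, together with $\mathrm{(w)K4}_{\Dmd\beta}$-validity, automatically yields $\mathrm{GL}_{\Dmd\beta}$-validity via \cref{GL-frames}, and this suffices for $\Lambda_1$-validity once the easy inclusion is in hand.
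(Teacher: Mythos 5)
Your proposal is correct and follows essentially the same route as the paper: reduce to $\Lambda_1 = \mathrm{GL}_{\Dmd\beta}$ via \cref{prop:gl-equiv}, get $\Lambda_1 \subseteq \mathrm{GL}_{\Dmd\beta}$ from \cref{cor:55}, and obtain the converse by establishing Kripke completeness of $\Lambda_1$ through \cref{GL-fmp} (using canonicity and semisubframe-heredity of the $\mathrm{(w)K4}_{\Dmd\beta}$ axioms together with \cref{GL-frames}) and then comparing frame classes via \cref{l:trans-lob-frames}. The only cosmetic differences are that the paper works with $L = \mathrm{wK4}_{\Dmd\beta} + {\rm AL\ddot ob}^{+n}$ (rewritten as $\mathrm{GL}_{\Dmd^2\sigma_n} + {\rm Aw}4_{\Dmd\beta}$) and derives the $\mathrm{K4}$ case from it, whereas you verify the hypotheses of \cref{GL-fmp} for each $\Lambda_1$ directly and spell out the semisubframe-validity check that the paper leaves implicit.
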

\begin{proof}
  Let $n = d(\beta) + 1$. Notice that $\Lambda_1 = \Lambda + {\rm AL\ddot ob}^{+n}$. By
  \cref{cor:55},
  \begin{gather*}
    \mathrm{wK4}_{\Dmd\beta} + {\rm AL\ddot ob}^{+n} \subseteq \mathrm{K4}_{\Dmd\beta} +
    {\rm AL\ddot ob}^{+n} \subseteq \mathrm{GL}_{\Dmd\beta}.
  \end{gather*}
  We will show that
  $L \defeq \mathrm{wK4}_{\Dmd\beta} + {\rm AL\ddot ob}^{+n} \supseteq \mathrm{GL}_{\Dmd\beta}$.
  Then, for $\Lambda \in \{\mathrm{wK4}_{\Dmd\beta}, \mathrm{K4}_{\Dmd\beta}\}$,
  $\Lambda_1 = \mathrm{GL}_{\Dmd\beta}$ and, by \cref{prop:gl-equiv},
  $\Lambda_2 = \Lambda_3 = \Lambda_4 = \mathrm{GL}_{\Dmd\beta}$.

  Notice that $\mathrm{wK4}_{\Dmd\beta} = \mathrm{K4}_{\Dmd^2\sigma_n} + {\rm Aw}4_{\Dmd\beta}$ by
  \cref{cor:inclusions} and
  $\mathrm{K4}_{\Dmd^2\sigma_n} + {\rm AL\ddot ob}^{+n} = \mathrm{GL}_{\Dmd^2\sigma_n}$ by
  \cref{cor:55,l:54}. Therefore
  \begin{gather*}
    L = \mathrm{wK4}_{\Dmd\beta} + {\rm AL\ddot ob}^{+n} = \mathrm{K4}_{\Dmd^2\sigma_n} +
    {\rm Aw}4_{\Dmd\beta} + {\rm AL\ddot ob}^{+n}
    = \mathrm{GL}_{\Dmd^2\sigma_n} + {\rm Aw}4_{\Dmd\beta}.
  \end{gather*}
  Since the formula ${\rm Aw}4_{\Dmd\beta}$ is canonical and, by \cref{pr:t-inherit},
  semisubframe-hereditary, $L$ is Kripke-complete by \cref{t:GL-fmp}. Let $\calF$ be an $L$-frame.
  Then $\calF \vDash \mathrm{wK4}_{\Dmd\beta}$ and, by \cref{l:trans-lob-frames}, $\calF$ is
  conversely well-founded. By \cref{pr:GL-frames}, $\calF \vDash \mathrm{GL}_{\Dmd\beta}$. Thus,
  $\mathrm{GL}_{\Dmd\beta} \subseteq L$.
\end{proof}
 \section{Some open problems}
  \label{s:problems}
  \subsection{Complexity}
    It is well-known that all logics between $\mathrm{K}$ and $\mathrm{S4}$ and between $\mathrm{K}$
    and $\mathrm{GL}$ are PSpace-hard~\cite{Lad77,Spaan93}. Therefore, $\mathrm{wK4}_\gamma$,
    $\mathrm{K4}_\gamma$, and $\mathrm{GL}_{\Dmd\beta}$ are PSpace-hard. The PSpace upper bound is
    known for $\mathrm{K4}$~\cite{Lad77} and $\mathrm{wK4}$~\cite{Shap05,Shap22}. For
    $\mathrm{wK4}^1_n$, $n \geq 2$ it was stated as an open problem in~\cite{KudShap25}. The method
    we used in this paper for establishing finite model property can be also used to prove PSpace
    decidability, as it was shown in~\cite{Shap05} for the transitive case. However, for our logics,
    filtration process should be modified to obtain this result. Therefore, we postpone it to
    another paper.
  \subsection{Subframe-hereditary pretransitive analogues of GL}
    In the proof of \cref{t:GL-fmp}, we extracted a finite conversely well-founded semisubmodel from
    the canonical model. It is unclear if one can always extract a submodel instead. To this end, it
    is interesting whether we can strengthen~\cref{l:GL-irrefl} in the following way: all clusters
    of maximal points in the canonical model of $\Lambda \supseteq \mathrm{GL}_{\Dmd^2\sigma_n}$ are
    irreflexive singletons. If this were the case, it would be possible to extract a submodel like
    in the proof of~\cref{t:K4-fmp}.
  \subsection{Weaker subframe-hereditary pretransitive logics}
    In \cref{ss:main} we noticed that the logics $\mathrm{K} + \mathrm{Trans}_n^s$ are
    pretransitive, canonical, and subframe-hereditary. In fact $\mathrm{K} + \mathrm{Trans}_n^s$ is
    the least $n$-transitive subframe-hereditary logic. However, these logics are weaker than
    $\mathrm{K4}_{\Dmd^2\sigma_n}$, whence it is still unknown if they have the finite model
    property.

  \bibliographystyle{alpha} \bibliography{defs/bib-new.bib}
\end{document}